\newtheorem{theorem}{Theorem}
\theoremstyle{plain}
\newtheorem{corollary}{Corollary}
\newtheorem{definition}{Definition}
\newtheorem{proposition}{Proposition}
\newtheorem{remark}{Remark}
\numberwithin{equation}{section}
\begin{document}
\title[The Kolmogorov-Riesz Compactness Theorem and Some Compactness
Criterions]{The Kolmogorov-Riesz Theorem and Some Compactness Criterions of
Bounded Subsets in Weighted Variable Exponent Amalgam and Sobolev Spaces}
\author{Ismail AYDIN}
\address{Sinop University Faculty of Arts and Sciences Department of
Mathematics \\
Sinop, TURKEY}
\email{iaydin@sinop.edu.tr}
\urladdr{}
\thanks{}
\author{Cihan UNAL}
\address{Sinop University Faculty of Arts and Sciences Department of
Mathematics \\
Sinop, TURKEY}
\email{cunal@sinop.edu.tr}
\urladdr{}
\date{}
\subjclass[2000]{Primary 46E35, 43A15, 46E30}
\keywords{Weighted variable exponent amalgam and Sobolev spaces,
compactness, totally bounded set}
\dedicatory{}
\thanks{}

\begin{abstract}
We study totally bounded subsets in weighted variable exponent amalgam and
Sobolev spaces. Moreover, this paper includes several detailed generalized
results of some compactness criterions in these spaces.
\end{abstract}

\maketitle

\section{Introduction}

Initially, the classical Riesz-Kolmogorov theorem states about the
compactness of subsets in $L^{p}\left[ 0,1\right] $ for $1<p<\infty $, see 
\cite{Kol}. This theorem has been generalized to some function spaces, such
as Takahashi \cite{Tak} for Orlicz spaces, Goes and Welland \cite{Go} for K%
\"{o}the spaces, Musielak \cite{Mus} for Musielak-Orlicz spaces, Rafeiro 
\cite{Raf} for variable exponent Lebesgue spaces, Bandaliyev \cite{Ba} for
weighted variable exponent Lebesgue spaces, G\'{o}rka and Rafeiro \cite{Gr}
for more general framework, namely in the case of Banach function spaces
(shortly, BF-space) and grand variable variable Lebesgue spaces, G\'{o}rka
and Macios \cite{Gor}, \cite{Gom} for classical Lebesgue variable exponent
Lebesgue in metric measure spaces. Weil \cite{Wei} considered compactness in 
$L^{p}$- spaces on locally compact groups. Moreover, the compactness
problems for various spaces of differentiable functions on the Euclidean
spaces have been studied by several authors. The classical criterion of
Kolmogorov-Riesz for compactness of subsets of $L^{p}\left( 1\leq p<\infty
\right) $ has been extended by Feichtinger \cite{Fie} to translation
invariant Banach function spaces. More details can be seen \cite{Hoh} and 
\cite{Pan}.

The amalgam of $L^{p}$ and $l^{q}$ on the real line is the space $\left(
L^{p},l^{q}\right) \left( 
\mathbb{R}
\right) $ (briefly, $\left( L^{p},l^{q}\right) $) consisting of functions
which are locally in $L^{p\text{ }}$ and have $l^{q}$ behavior at infinity.
Wiener \cite{Wie} studied several special cases of amalgam spaces including $%
\left( L^{1},l^{2}\right) $, $\left( L^{2},l^{\infty }\right) $, $\left(
L^{\infty },l^{1}\right) $ and $\left( L^{1},l^{\infty }\right) $.
Comprehensive information about amalgam spaces can be found in \cite{Fou}, 
\cite{Ho} and \cite{Sq}. Recently, there have been many interesting and
important papers appeared in variable exponent amalgam spaces $\left(
L^{r(.)},\ell ^{s}\right) $ such as Aydin \cite{Ay}, Aydin \cite{Ady}, Aydin
and Gurkanli\ \cite{Ag}, Gurkanli \cite{Gur}, Gurkanli and Aydin \cite{Gra},
Hanche-Olsen and Holden \cite{Hoh}, Meskhi and Zaighum \cite{MesZa},
Kokilashvili, Meskhi and Zaighum \cite{Kmz} and Kulak and Gurkanli \cite{Kul}%
. In 2003, Pandey studied the compactness of bounded subsets in a Wiener
amalgam space $W(B,Y)$ whose local and global components are solid Banach
function spaces and satisfy conditions in \cite[Definition 5.1]{Pan}.

In this study, we focus especially on the spaces $\left( L_{w}^{p(.)},\ell
^{q}\right) $ on $%
\mathbb{R}
$, and discuss totally bounded subsets in weighted variable exponent amalgam
and Sobolev spaces. Moreover, it is well known that the spaces $L_{w}^{p(.)}$
and $\left( L_{w}^{p(.)},\ell ^{q}\right) $ are not translation invariant
and the map $y\longrightarrow L_{y}f$ is not continuous for $f\in \left(
L_{w}^{p(.)},\ell ^{q}\right) $. Also, the Young theorem $\left\Vert f\ast
g\right\Vert _{p(.),w}\leq \left\Vert f\right\Vert _{p(.),w}\left\Vert
g\right\Vert _{1}$ is not valid for $f\in L_{w}^{p(.)}\left( 
\mathbb{R}
^{n}\right) $ and $g\in L^{1}\left( 
\mathbb{R}
^{n}\right) ,$ see \cite{Kor}. Hence, our concerned amalgam space $\left(
L_{w}^{p(.)},\ell ^{q}\right) $ whose local component doesn't provide
conditions in \cite[Definition 5.1]{Pan}. In this regard, we will propose
some new conditions and criterions for compactness of bounded subsets in
these spaces. In addition, Gurkanli \cite{Gur} showed that $\left(
L_{w}^{p(.)},\ell ^{q}\right) =L_{w}^{p(.)}$ under the some conditions, that
is, the space $\left( L_{w}^{p(.)},\ell ^{q}\right) $ is a extension of $%
L_{w}^{p(.)}$. Finally, we will obtain that our main theorem provides a
generalization of the corresponding results, such as Bandaliyev \cite{Ba},
Bandaliyev and G\'{o}rka \cite{BanGor}, G\'{o}rka and Macios \cite{Gor}, 
\cite{Gom}, G\'{o}rka and Rafeiro \cite{Gr} and Rafeiro \cite{Raf}.

\section{\textbf{Notations and Preliminaries}}

In this section, we give some essential definitions, theorems and
compactness criterions about totally bounded subsets in weighted variable
exponent Lebesgue spaces.

\begin{definition}
Let $\left( X,d\right) $ be a metric space and $\varepsilon >0$. A subset $K$
in $X$ is called a $\varepsilon $-net (or $\varepsilon $-cover) for $X$ if
for every $x\in X$ there is a $x_{\varepsilon }\in K$ such that $d\left(
x,x_{\varepsilon }\right) <\varepsilon $. Moreover, a metric space is called
totally bounded if it admits a finite $\varepsilon $-net.
\end{definition}

It is known that a subset of a complete metric space relatively compact
(i.e. its closure is compact) if and only if it is totally bounded, see \cite%
{Yo}.

\begin{theorem}
Assume that X is a metric space and $K\subset X.$ Then the following
conditions are equivalent.

\begin{enumerate}
\item[\textit{(i)}] $K$ is totally bounded (or, precompact) and complete

\item[\textit{(ii)}] $K$ is compact.
\end{enumerate}
\end{theorem}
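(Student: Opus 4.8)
The plan is to treat \emph{(ii)} $\Rightarrow$ \emph{(i)} directly from the open-cover definition of compactness, and to obtain \emph{(i)} $\Rightarrow$ \emph{(ii)} by passing through sequential compactness as an intermediate step.

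For \emph{(ii)} $\Rightarrow$ \emph{(i)}: assume $K$ is compact. To see that $K$ is totally bounded, fix $\varepsilon >0$; the family $\left\{ B\left( x,\varepsilon \right) :x\in K\right\} $ is an open cover of $K$, hence admits a finite subcover $\left\{ B\left( x_{i},\varepsilon \right) \right\} _{i=1}^{m}$, and then $\left\{ x_{1},\dots ,x_{m}\right\} $ is a finite $\varepsilon $-net. For completeness, let $\left( x_{n}\right) $ be a Cauchy sequence in $K$; by compactness (which in a metric space yields sequential compactness) it has a subsequence converging to some $x\in K$, and a Cauchy sequence possessing a convergent subsequence converges to the same limit, so $x_{n}\rightarrow x\in K$.

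For \emph{(i)} $\Rightarrow$ \emph{(ii)}: assume $K$ is totally bounded and complete. First I would show $K$ is sequentially compact. Given $\left( x_{n}\right) \subset K$, for each $k\in \mathbb{N}$ cover $K$ by finitely many balls of radius $1/k$; by the pigeonhole principle one of these balls contains $x_{n}$ for infinitely many $n$, and iterating this selection and taking a diagonal subsequence produces a subsequence $\left( x_{n_{j}}\right) $ that is Cauchy. Completeness of $K$ then gives $x_{n_{j}}\rightarrow x$ for some $x\in K$. Next, to upgrade sequential compactness to compactness, I would prove the Lebesgue number lemma: if $\mathcal{U}$ is an open cover of $K$ with no Lebesgue number, choose $y_{n}\in K$ with $B\left( y_{n},1/n\right) $ contained in no member of $\mathcal{U}$, pass to a convergent subsequence $y_{n_{j}}\rightarrow y$, pick $U\in \mathcal{U}$ and $\delta >0$ with $B\left( y,\delta \right) \subset U$, and derive a contradiction once $1/n_{j}<\delta /2$ and $d\left( y_{n_{j}},y\right) <\delta /2$. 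With a Lebesgue number $\lambda >0$ in hand, total boundedness gives finitely many balls $B\left( z_{i},\lambda \right) $ covering $K$, each contained in some $U_{i}\in \mathcal{U}$, so $\left\{ U_{i}\right\} $ is the desired finite subcover.

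The main obstacle is this last passage from sequential to covering compactness: both the diagonal extraction of the Cauchy subsequence and the Lebesgue number argument must invoke total boundedness and completeness in exactly the right order. Alternatively, one may bypass the Lebesgue number lemma entirely by applying the cited characterization (a subset of a complete metric space is relatively compact iff it is totally bounded, see \cite{Yo}) to the set $K$, whose completeness makes it closed in its closure, so that $\overline{K}=K$ is compact precisely when $K$ is totally bounded.
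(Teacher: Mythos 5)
Your proposal is correct, and there is nothing in the paper to measure it against: the paper states this theorem as standard background in the preliminaries and gives no proof at all, contenting itself with the neighbouring remark that a subset of a complete metric space is relatively compact iff it is totally bounded, citing \cite{Yo}. Your main route is the classical textbook argument and it is sound in both directions: the open-cover extraction of a finite $\varepsilon$-net and the Cauchy-plus-convergent-subsequence step give \emph{(ii)}$\Rightarrow$\emph{(i)}, and the nested-balls/diagonal construction of a Cauchy subsequence followed by the Lebesgue number lemma correctly upgrades total boundedness and completeness to covering compactness. Two small remarks. First, your appeal to ``compactness yields sequential compactness in a metric space'' is itself a nontrivial standard fact of the same calibre as the theorem; for full self-containment you could prove it inline (a sequence with no convergent subsequence gives each point a neighbourhood meeting only finitely many terms, contradicting the existence of a finite subcover), or avoid it altogether by noting that for a Cauchy sequence the closures of its tails have the finite intersection property, so compactness gives a common point, which is then the limit. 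Second, your closing alternative --- viewing $K$ as a complete metric space in its own right and applying the cited criterion of \cite{Yo}, since completeness makes $K$ closed so that $\overline{K}=K$ --- is essentially the tacit justification the paper itself relies on, and is the shortest correct route given what the paper already quotes.
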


\begin{definition}
Let $X$ and $Y$ be metric spaces. A family $\tciFourier $ of functions from $%
X$ to $Y$ is said to be equicontinuous if given $\varepsilon >0$ there
exists a number $\delta >0$ such that $d_{Y}\left( f(x),f(y)\right)
<\varepsilon $ for all $f\in \tciFourier $ and all $x,y\in X$ satisfying $%
d_{X}\left( x,y\right) <\delta .$
\end{definition}

The following theorem is quite useful for several compactness results.

\begin{theorem}
(\cite{Hoh})Let $X$ be a metric space. Assume that, for every $\varepsilon
>0,$ there exists some $\delta >0$, a metric space $W$, and a mapping $\Phi
:X\longrightarrow W$ so that $\Phi \left[ X\right] $ is totally bounded, and
whenever $x,y\in X$ are such that $d\left( \Phi (x),\Phi (y)\right) <\delta $%
, then $d\left( x,y\right) <\varepsilon .$ Then $X$ is totally bounded.
\end{theorem}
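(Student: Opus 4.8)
The plan is to verify the definition of total boundedness directly: for each $\varepsilon>0$, exhibit a finite $\varepsilon$-net for $X$, built by transporting a suitable net of the totally bounded image $\Phi[X]$ back through $\Phi$. So fix $\varepsilon>0$ and invoke the hypothesis to obtain a $\delta>0$, a metric space $W$, and a map $\Phi:X\longrightarrow W$ such that $\Phi[X]$ is totally bounded and such that $d(\Phi(x),\Phi(y))<\delta$ implies $d(x,y)<\varepsilon$ for all $x,y\in X$.

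Since $\Phi[X]$ is totally bounded (as a metric subspace of $W$), it admits a finite $\delta$-net; that is, there are finitely many points $z_{1},\dots,z_{n}\in\Phi[X]$ such that every point of $\Phi[X]$ lies within distance $\delta$ of some $z_{j}$. Here it is important that the net be chosen \emph{inside} $\Phi[X]$, so that each $z_{j}$ has a preimage: pick $x_{1},\dots,x_{n}\in X$ with $\Phi(x_{j})=z_{j}$ (one choice per index suffices).

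It then remains to check that $\{x_{1},\dots,x_{n}\}$ is an $\varepsilon$-net for $X$. Given $x\in X$, the point $\Phi(x)$ lies in $\Phi[X]$, hence $d(\Phi(x),z_{j})<\delta$ for some $j$; since $z_{j}=\Phi(x_{j})$, the hypothesis on $\Phi$ gives $d(x,x_{j})<\varepsilon$. Thus $X$ possesses a finite $\varepsilon$-net for every $\varepsilon>0$, which is precisely the statement that $X$ is totally bounded.

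The argument is essentially a one-step pushforward/pullback of a covering, and I do not anticipate any genuine obstacle; the only subtlety worth flagging is the choice of the $\delta$-net \emph{within} $\Phi[X]$ rather than in $W$, after which the metric implication in the hypothesis does all the remaining work. Note also that no completeness or continuity assumptions on $\Phi$ are needed — only the stated inequality — which is why this lemma is convenient as a black box in the subsequent compactness arguments.
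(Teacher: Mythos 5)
Your argument is correct: since the paper's definition of an $\varepsilon$-net requires the net to be a subset of the space in question, a finite $\delta$-net for $\Phi[X]$ consists of image points, so the preimages $x_{1},\dots,x_{n}$ exist and the hypothesis $d(\Phi(x),\Phi(x_{j}))<\delta\Rightarrow d(x,x_{j})<\varepsilon$ makes them a finite $\varepsilon$-net for $X$. The paper itself offers no proof (the result is quoted from \cite{Hoh}), and your pullback-of-a-net argument is essentially the standard proof given there, which instead covers $\Phi[X]$ by finitely many sets of diameter less than $\delta$ and takes their preimages; the two routes are interchangeable.
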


\begin{definition}
(\cite{Kor})For a measurable function $p\left( .\right) :%
\mathbb{R}
^{n}\longrightarrow \left[ 1,\infty \right) $ (called the variable exponent
on $%
\mathbb{R}
^{n}$ by the symbol $P\left( 
\mathbb{R}
^{n}\right) $), we put 
\begin{equation*}
p^{-}=\underset{x\in 
\mathbb{R}
^{n}}{\text{essinf}}p(x)\text{, \ \ \ \ \ \ }p^{+}=\underset{x\in 
\mathbb{R}
^{n}}{\text{esssup}}p(x)\text{.}
\end{equation*}%
The variable exponent Lebesgue spaces $L^{p(.)}(%
\mathbb{R}
^{n})$ is defined as the set of all measurable functions $f$ on $%
\mathbb{R}
^{n}$ such that $\varrho _{p(.)}(\lambda f)<\infty $ for some $\lambda >0$,
equipped with the Luxemburg norm%
\begin{equation*}
\left\Vert f\right\Vert _{p(.)}=\inf \left\{ \lambda >0:\varrho _{p\left(
.\right) }\left( \frac{f}{\lambda }\right) \leq 1\right\} \text{,}
\end{equation*}%
where $\varrho _{p(.)}(f)=\dint\limits_{%
\mathbb{R}
^{n}}\left\vert f(x)\right\vert ^{p(x)}dx.$

If $p^{+}<\infty $, then $f\in L^{p(.)}(%
\mathbb{R}
^{n})$ iff $\varrho _{p(.)}(f)<\infty $. The set $L^{p(.)}(%
\mathbb{R}
^{n})$ is a Banach space with the norm $\left\Vert .\right\Vert _{p(.)}$.
Moreover, the norm $\left\Vert .\right\Vert _{p(.)}$ coincides with the
usual Lebesgue norm $\left\Vert .\right\Vert _{p}$ whenever $p(.)=p$ is a
constant function.
\end{definition}

\begin{definition}
A measurable and locally integrable function $w:%
\mathbb{R}
^{n}\longrightarrow \left( 0,\infty \right) $ is called a weight function.
The weighted modular is defined by%
\begin{equation*}
\varrho _{p(.),w}(f)=\dint\limits_{%
\mathbb{R}
^{n}}\left\vert f(x)\right\vert ^{p(x)}w(x)dx.
\end{equation*}%
The weighted variable exponent Lebesgue space $L_{w}^{p(.)}(%
\mathbb{R}
^{n})$ consists of all measurable functions $f$ on $%
\mathbb{R}
^{n}$ for which $\left\Vert f\right\Vert _{L_{w}^{p(.)}(%
\mathbb{R}
^{n})}=\left\Vert fw^{\frac{1}{p(.)}}\right\Vert _{p(.)}<\infty $. Also, $%
L_{w}^{p(.)}(%
\mathbb{R}
^{n})$ is a uniformly convex Banach space, thus reflexive.

The relations between the modular $\varrho _{p(.),w}(.)$ and $\left\Vert
.\right\Vert _{L_{w}^{p(.)}(%
\mathbb{R}
^{n})}$ as follows%
\begin{eqnarray*}
\min \left\{ \varrho _{p(.),w}(f)^{\frac{1}{p^{-}}},\varrho _{p(.),w}(f)^{%
\frac{1}{p^{+}}}\right\} &\leq &\left\Vert f\right\Vert _{L_{w}^{p(.)}(%
\mathbb{R}
^{n})}\leq \max \left\{ \varrho _{p(.),w}(f)^{\frac{1}{p^{-}}},\varrho
_{p(.),w}(f)^{\frac{1}{p^{+}}}\right\} \\
\min \left\{ \left\Vert f\right\Vert _{L_{w}^{p(.)}(%
\mathbb{R}
^{n})}^{p^{+}},\left\Vert f\right\Vert _{L_{w}^{p(.)}(%
\mathbb{R}
^{n})}^{p^{-}}\right\} &\leq &\varrho _{p(.),w}(f)\leq \max \left\{
\left\Vert f\right\Vert _{L_{w}^{p(.)}(%
\mathbb{R}
^{n})}^{p^{+}},\left\Vert f\right\Vert _{L_{w}^{p(.)}(%
\mathbb{R}
^{n})}^{p^{-}}\right\} .
\end{eqnarray*}%
Moreover, if $0<C\leq w$, then we have $L_{w}^{p(.)}(%
\mathbb{R}
^{n})\hookrightarrow L^{p(.)}(%
\mathbb{R}
^{n}),$ since one easily sees that 
\begin{equation*}
C\dint\limits_{%
\mathbb{R}
^{n}}\left\vert f(x)\right\vert ^{p(x)}dx\leq \dint\limits_{%
\mathbb{R}
^{n}}\left\vert f(x)\right\vert ^{p(x)}w(x)dx
\end{equation*}%
and $C\left\Vert f\right\Vert _{p(.)}\leq \left\Vert f\right\Vert
_{L_{w}^{p(.)}(%
\mathbb{R}
^{n})}$, see \cite{Ayd}.
\end{definition}

\begin{theorem}
Let $p\left( .\right) ,q\left( .\right) \in P\left( 
\mathbb{R}
^{n}\right) $ such that $\frac{1}{p(.)}+\frac{1}{q(.)}=1$. Then for $f\in
L_{w}^{p(.)}(%
\mathbb{R}
^{n})$ and $g\in L_{w^{\ast }}^{q(.)}(%
\mathbb{R}
^{n})$, we have $fg\in L^{1}(%
\mathbb{R}
^{n})$ and%
\begin{equation*}
\dint\limits_{%
\mathbb{R}
^{n}}\left\vert f(x)g(x)\right\vert dx\leq C\left\Vert f\right\Vert
_{L_{w}^{p(.)}(%
\mathbb{R}
^{n})}\left\Vert g\right\Vert _{L_{w^{\ast }}^{q(.)}(%
\mathbb{R}
^{n})},
\end{equation*}%
where $w^{\ast }=w^{1-q(.)}.$
\end{theorem}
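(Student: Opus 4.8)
The plan is to reduce this weighted statement to the classical Hölder inequality for (unweighted) variable exponent Lebesgue spaces, which is available from Kov\'{a}\v{c}ik--R\'{a}kosn\'{i}k (the source \cite{Kor} already cited for the basic definitions). The bridge is simply the defining identity $\left\Vert f\right\Vert _{L_{w}^{p(.)}}=\left\Vert fw^{1/p(.)}\right\Vert _{p(.)}$, which lets us transfer everything to the space $L^{p(.)}(\mathbb{R}^{n})$.

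Concretely, I would set $F=fw^{1/p(.)}$ and $G=g\bigl(w^{\ast }\bigr)^{1/q(.)}=gw^{(1-q(.))/q(.)}$. Since $w$ is measurable and strictly positive, these are well-defined measurable functions, and by definition of the weighted norms one has $\left\Vert F\right\Vert _{p(.)}=\left\Vert f\right\Vert _{L_{w}^{p(.)}}$ and $\left\Vert G\right\Vert _{q(.)}=\left\Vert g\right\Vert _{L_{w^{\ast }}^{q(.)}}$; in particular $F\in L^{p(.)}$ and $G\in L^{q(.)}$. The one computation to carry out is the cancellation of the weight: pointwise a.e.
\begin{equation*}
F(x)G(x)=f(x)g(x)\,w(x)^{\frac{1}{p(x)}+\frac{1-q(x)}{q(x)}}=f(x)g(x)\,w(x)^{\frac{1}{p(x)}+\frac{1}{q(x)}-1}=f(x)g(x),
\end{equation*}
where the last equality uses the conjugacy hypothesis $\frac{1}{p(.)}+\frac{1}{q(.)}=1$.

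With this in hand, the proof finishes by applying the ordinary variable exponent Hölder inequality to the pair $F,G$: there is a constant $C$ (one may take $C=\frac{1}{p^{-}}+\frac{1}{q^{-}}$, or simply $C=2$) such that $\int_{\mathbb{R}^{n}}\left\vert FG\right\vert \,dx\leq C\left\Vert F\right\Vert _{p(.)}\left\Vert G\right\Vert _{q(.)}$. Substituting $FG=fg$ and the norm identities above yields both $fg\in L^{1}(\mathbb{R}^{n})$ and the claimed estimate. There is no real obstacle here; the only point requiring a word of care is that the algebraic manipulation of the weight powers is legitimate only because $w>0$ a.e.\ (so that $w^{t}$ makes sense for every real exponent $t$ and the exponent laws hold pointwise), and that the constant $C$ produced is exactly the one from the unweighted inequality, so no weight-dependence enters it.
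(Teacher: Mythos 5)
Your proof is correct and takes essentially the same route as the paper: the paper inserts the factor $w(x)^{\frac{1}{p(x)}-\frac{1}{p(x)}}$, i.e.\ pairs $fw^{1/p(.)}$ with $gw^{-1/p(.)}$ (which equals your $g\left( w^{\ast }\right) ^{1/q(.)}$ by conjugacy), and then applies the unweighted variable exponent H\"{o}lder inequality, exactly as you do. Your version merely spells out the exponent cancellation and the norm identities more explicitly.
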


\begin{proof}
By the H\"{o}lder inequality for variable exponent Lebesgue spaces, we get 
\begin{eqnarray*}
\dint\limits_{%
\mathbb{R}
^{n}}\left\vert f(x)g(x)\right\vert dx &=&\dint\limits_{%
\mathbb{R}
^{n}}\left\vert f(x)g(x)\right\vert w(x)^{\frac{1}{p(x)}-\frac{1}{p(x)}}dx \\
&\leq &C\left\Vert fw^{\frac{1}{p(.)}}\right\Vert _{p(.)}\left\Vert gw^{-%
\frac{1}{p(.)}}\right\Vert _{q(.)}
\end{eqnarray*}%
for some $C>0$. That is the desired result.
\end{proof}

The space $L_{loc}^{1}\left( 
\mathbb{R}
^{n}\right) $ consists of all measurable functions $f$ on $%
\mathbb{R}
^{n}$ such that $f\chi _{K}\in L^{1}\left( 
\mathbb{R}
^{n}\right) $ for any compact subset $K\subset 
\mathbb{R}
^{n}$. It is a topological vector space with the family of seminorms $%
f\longrightarrow \left\Vert f\chi _{K}\right\Vert _{L^{1}}$. A Banach
function space (shortly, BF-space) on $%
\mathbb{R}
^{n}$ is a Banach space $\left( B,\left\Vert .\right\Vert _{B}\right) $ of
measurable functions which is continuously embedded into $L_{loc}^{1}\left( 
\mathbb{R}
^{n}\right) $, briefly $B\hookrightarrow L_{loc}^{1}\left( 
\mathbb{R}
^{n}\right) ,$ i.e. for any compact subset $K\subset 
\mathbb{R}
^{n}$ there exists some constant $C_{K}>0$ such that $\left\Vert f\chi
_{K}\right\Vert _{L^{1}}\leq C_{K}\left\Vert f\right\Vert _{B}$ for all $%
f\in B$.

The dual space of $L_{w}^{p(.)}(%
\mathbb{R}
^{n})$ is $L_{w^{\ast }}^{q(.)}(%
\mathbb{R}
^{n})$, where $\frac{1}{p(.)}+\frac{1}{q(.)}=1$ and $w^{\ast }=w^{1-q(.)}$,
see \cite{Lah}. Also, it is known that if $X$ is a BF-space, then the dual
space $X^{\ast }$ consisting of $g$ such that 
\begin{equation*}
\left\Vert g\right\Vert _{X^{\ast }}=\sup_{\substack{ f\in X  \\ \left\Vert
f\right\Vert _{X}\leq 1}}\dint\limits_{%
\mathbb{R}
^{n}}\left\vert g(x)f(x)\right\vert dx
\end{equation*}%
is also a BF-space. If using H\"{o}lder inequality for variable Lebesgue
spaces, then we have%
\begin{equation*}
\dint\limits_{%
\mathbb{R}
^{n}}\left\vert f(x)g(x)\right\vert dx\leq C\left\Vert f\right\Vert
_{L_{w}^{p(.)}(%
\mathbb{R}
^{n})}\left\Vert g\right\Vert _{L_{w^{\ast }}^{q(.)}(%
\mathbb{R}
^{n})}
\end{equation*}%
and%
\begin{equation*}
\left\Vert g\right\Vert _{\left( L_{w}^{p(.)}(%
\mathbb{R}
^{n})\right) ^{\ast }}\leq \left\Vert g\right\Vert _{L_{w^{\ast }}^{q(.)}(%
\mathbb{R}
^{n})}
\end{equation*}%
for some $C>0$. Therefore, the norm $\left\Vert .\right\Vert _{\left(
L_{w}^{p(.)}(%
\mathbb{R}
^{n})\right) ^{\ast }}$ is well defined. Moreover, $\left\Vert .\right\Vert
_{\left( L_{w}^{p(.)}(%
\mathbb{R}
^{n})\right) ^{\ast }}$ and $\left\Vert .\right\Vert _{L_{w^{\ast }}^{q(.)}(%
\mathbb{R}
^{n})}$ are equivalent by the similar methods for dual spaces of $L^{p(.)}$,
see \cite{Kor}. Therefore, there is a isometric isomorphism between $\left(
L_{w}^{p(.)}(%
\mathbb{R}
^{n})\right) ^{\ast }$ and $L_{w^{\ast }}^{q(.)}(%
\mathbb{R}
^{n}).$ This yields that $\left( L_{w}^{p(.)}(%
\mathbb{R}
^{n})\right) ^{\ast }=L_{w^{\ast }}^{q(.)}(%
\mathbb{R}
^{n}).$

\begin{remark}
Let $K\subset 
\mathbb{R}
^{n}$ with $\left\vert K\right\vert <\infty $. Then we have $\left\Vert \chi
_{K}\right\Vert _{L_{w}^{p(.)}(%
\mathbb{R}
^{n})}<\infty $.
\end{remark}

\begin{proof}
Fix $\lambda \geq 1$. Since weight function $w$ is locally integrable
function $%
\mathbb{R}
^{n}$, then 
\begin{eqnarray*}
\varrho _{p(.),w}\left( \frac{\chi _{K}}{\lambda }\right) &=&\dint\limits_{%
\mathbb{R}
^{n}}\frac{\left\vert \chi _{K}(x)\right\vert ^{p(x)}w(x)}{\lambda ^{p(x)}}%
dx=\dint\limits_{K}\lambda ^{-p(x)}w(x)dx \\
&\leq &\lambda ^{-p^{-}}\dint\limits_{K}w(x)dx\leq \lambda ^{-1}C_{K},
\end{eqnarray*}%
where $C_{K}=\dint\limits_{K}w(x)dx<\infty $. If we take $\lambda =C_{K}+1>0$%
, then we have $\left\Vert \chi _{K}\right\Vert _{L_{w}^{p(.)}(%
\mathbb{R}
^{n})}\leq C_{K}+1$.
\end{proof}

\begin{definition}
For $x\in 
\mathbb{R}
^{n}$ and $r>0,$ we denote an open ball with center $x$ and radius $r$ by $%
B(x,r)$. For $f\in L_{loc}^{1}\left( 
\mathbb{R}
^{n}\right) ,$ we denote the (centered) Hardy-Littlewood maximal operator $%
Mf $\ of $f$ by%
\begin{equation*}
Mf(x)=\underset{r>0}{\sup }\frac{1}{\left\vert B(x,r)\right\vert }%
\dint\limits_{B(x,r)}\left\vert f(y)\right\vert dy
\end{equation*}%
where the supremum is taken over all balls $B(x,r).$
\end{definition}

H\"{a}st\"{o} and Diening defined the class $A_{p(.)}$ to consist of those
weights $w$ such that%
\begin{equation*}
\left\Vert w\right\Vert _{A_{p(.)}}=\underset{B\in \text{\ss }}{\sup }%
\left\vert B\right\vert ^{-p_{B}}\left\Vert w\right\Vert
_{L^{1}(B)}\left\Vert \frac{1}{w}\right\Vert _{L^{\frac{p^{\shortmid }(.)}{%
p(.)}}(B)}<\infty ,
\end{equation*}%
where \ss\ denotes the set of all balls in $%
\mathbb{R}
^{n}$, $p_{B}=\left( \frac{1}{\left\vert B\right\vert }\int\limits_{B}\frac{1%
}{p(x)}dx\right) ^{-1}$ and $\frac{1}{p(.)}+\frac{1}{p^{\shortmid }(.)}=1$.
Note that this class is ordinary Muckenhoupt class $A_{p\left( .\right) }$
if $p\left( .\right) $ is a constant function, see \cite{Hd}.

\begin{definition}
We say that $p(.)$ satisfies the local log-H\"{o}lder continuity condition if%
\begin{equation*}
\left\vert p(x)-p(y)\right\vert \leq \frac{C}{\log \left( e+\frac{1}{%
\left\vert x-y\right\vert }\right) }
\end{equation*}%
for all $x,y\in 
\mathbb{R}
^{n}.$ If the inequality 
\begin{equation*}
\left\vert p(x)-p_{\infty }\right\vert \leq \frac{C}{\log \left(
e+\left\vert x\right\vert \right) }
\end{equation*}%
holds for some $p_{\infty }>1$, $C>0$ and all $x\in 
\mathbb{R}
^{n},$ then we say that $p(.)$ satisfies the log-H\"{o}lder decay condition.
We denote by the symbol $P^{\log }(%
\mathbb{R}
^{n})$ the class of variable exponents which are log-H\"{o}lder continuous,
i.e. which satisfy the local log-H\"{o}lder continuity condition and the
log-H\"{o}lder decay condition.
\end{definition}

Let $p\left( .\right) ,q\left( .\right) \in P^{\log }(%
\mathbb{R}
^{n}),$ $1<p^{-}\leq p^{+}<\infty $ and $1<q^{-}\leq q^{+}<\infty $. If $%
q\left( .\right) \leq p\left( .\right) $, then there exists a constant $C>0$
depending on the characteristics of $p\left( .\right) $ and $q\left(
.\right) $ such that $\left\Vert w\right\Vert _{A_{p(.)}}\leq C\left\Vert
w\right\Vert _{A_{q(.)}}$. This yields that%
\begin{equation*}
A_{1}\subset A_{p^{-}}\subset A_{p(.)}\subset A_{p^{+}}\subset A_{\infty }
\end{equation*}%
for $p\left( .\right) \in P^{\log }(%
\mathbb{R}
^{n})$ and $1<p^{-}\leq p\left( .\right) \leq p^{+}<\infty .$

Let $p\left( .\right) \in P^{\log }(%
\mathbb{R}
^{n})$ and $1<p^{-}\leq p^{+}<\infty $. Then $M:L_{w}^{p(.)}(%
\mathbb{R}
^{n})\hookrightarrow L_{w}^{p(.)}(%
\mathbb{R}
^{n})$ if and only if $w\in A_{p(.)},$ see \cite{Hd}.

We use the notation 
\begin{equation*}
\Phi \left( 
\mathbb{R}
^{n}\right) =\left\{ p(.):1<p^{-}\leq p(.)\leq p^{+}<\infty ,\text{ }%
\left\Vert Mf\right\Vert _{L_{w}^{p(.)}(%
\mathbb{R}
^{n})}\leq C\left\Vert f\right\Vert _{L_{w}^{p(.)}(%
\mathbb{R}
^{n})}\right\} \text{,}
\end{equation*}%
i.e. the maximal operator $M$ is bounded on $L_{w}^{p(.)}(%
\mathbb{R}
^{n})$. Hence we can find a sufficient condition for $p(.)\in \mathcal{P}%
\left( 
\mathbb{R}
^{n}\right) $.

\begin{proposition}
\label{proposition1}(\cite{Ayd})Let $w$ be a weight function and $%
1<p^{-}\leq p\left( .\right) \leq p^{+}<\infty $. If $w^{-\frac{1}{p(.)-1}%
}\in L_{loc}^{1}\left( 
\mathbb{R}
^{n}\right) $, then $L_{w}^{p(.)}(%
\mathbb{R}
^{n})\hookrightarrow L_{loc}^{1}\left( 
\mathbb{R}
^{n}\right) $.
\end{proposition}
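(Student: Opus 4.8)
The plan is to show that for any compact set $K\subset\mathbb{R}^n$ there is a constant $C_K>0$ with $\left\Vert f\chi_K\right\Vert_{L^1}\leq C_K\left\Vert f\right\Vert_{L_w^{p(.)}(\mathbb{R}^n)}$ for every $f\in L_w^{p(.)}(\mathbb{R}^n)$, which is exactly the definition of the embedding $L_w^{p(.)}(\mathbb{R}^n)\hookrightarrow L_{loc}^1(\mathbb{R}^n)$. The natural tool is the Hölder inequality for weighted variable exponent Lebesgue spaces (the theorem stated earlier in the excerpt, with $w^\ast=w^{1-q(.)}$ and $\frac1{p(.)}+\frac1{q(.)}=1$). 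First I would fix a compact $K$ and write $f\chi_K = (f)(\chi_K)$, so that by that Hölder inequality
\begin{equation*}
\dint\limits_{\mathbb{R}^n}\left\vert f(x)\chi_K(x)\right\vert dx \leq C\left\Vert f\right\Vert_{L_w^{p(.)}(\mathbb{R}^n)}\left\Vert \chi_K\right\Vert_{L_{w^\ast}^{q(.)}(\mathbb{R}^n)}.
\end{equation*}

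The key step is then to verify that $\left\Vert \chi_K\right\Vert_{L_{w^\ast}^{q(.)}(\mathbb{R}^n)}<\infty$, and here is where the hypothesis $w^{-\frac1{p(.)-1}}\in L_{loc}^1(\mathbb{R}^n)$ enters. Since $\frac1{q(.)}=1-\frac1{p(.)}$, a direct computation gives $q(.)=\frac{p(.)}{p(.)-1}$, and the defining weight for the dual space is $w^\ast=w^{1-q(.)}=w^{-\frac1{p(.)-1}}$. Thus the weighted modular of $\chi_K$ with respect to $(q(.),w^\ast)$ is, for $\lambda\geq 1$,
\begin{equation*}
\varrho_{q(.),w^\ast}\left(\frac{\chi_K}{\lambda}\right)=\dint\limits_K \lambda^{-q(x)}w^\ast(x)\,dx\leq \lambda^{-q^-}\dint\limits_K w^{-\frac{1}{p(x)-1}}(x)\,dx<\infty,
\end{equation*}
because $w^{-\frac1{p(.)-1}}\in L_{loc}^1(\mathbb{R}^n)$ and $K$ is compact (note $q^-\geq 1$ and $\lambda\geq 1$ give $\lambda^{-q(x)}\leq\lambda^{-q^-}\leq\lambda^{-1}$). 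Choosing $\lambda$ large enough — for instance $\lambda = 1+\int_K w^{-\frac1{p(x)-1}}(x)\,dx$ — forces the modular to be at most $1$, hence $\left\Vert \chi_K\right\Vert_{L_{w^\ast}^{q(.)}(\mathbb{R}^n)}\leq \lambda<\infty$. This is essentially the same argument used in the proof of the preceding Remark, now applied to the dual exponent and dual weight.

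Combining the two displays yields $\left\Vert f\chi_K\right\Vert_{L^1}\leq C_K\left\Vert f\right\Vert_{L_w^{p(.)}(\mathbb{R}^n)}$ with $C_K = C\cdot\left\Vert\chi_K\right\Vert_{L_{w^\ast}^{q(.)}(\mathbb{R}^n)}$, which depends only on $K$ (and on the fixed data $p(.)$, $w$), establishing the continuous embedding into $L_{loc}^1(\mathbb{R}^n)$. The only mild subtlety — the part I would be most careful about — is confirming that $w^\ast=w^{1-q(.)}$ really coincides with $w^{-1/(p(.)-1)}$ pointwise, which is just the algebra $1-q(x)=1-\frac{p(x)}{p(x)-1}=-\frac{1}{p(x)-1}$, and checking that $w^\ast$ is indeed a legitimate weight (positive, measurable, and locally integrable on compacta) so that the dual space and its Hölder inequality are available; local integrability of $w^\ast$ is precisely the standing hypothesis, so no obstacle remains.
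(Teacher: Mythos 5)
Your proof is correct and is essentially the standard argument behind this embedding: the paper itself gives no proof (it cites \cite{Ayd}), but the cited proof proceeds exactly as you do, applying the weighted H\"older inequality with dual weight $w^{\ast}=w^{1-q(.)}=w^{-\frac{1}{p(.)-1}}$ and checking $\left\Vert \chi _{K}\right\Vert _{L_{w^{\ast }}^{q(.)}}<\infty $ from the local integrability hypothesis, just as in the paper's Remark on $\left\Vert \chi _{K}\right\Vert _{L_{w}^{p(.)}}$. No gaps; the verification that $1-q(x)=-\frac{1}{p(x)-1}$ and that $1<q^{-}\leq q^{+}<\infty $ are exactly the points that needed checking, and you handled them.
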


\begin{definition}
Let $\varphi :%
\mathbb{R}
^{n}\longrightarrow 
\mathbb{R}
$ be a nonnegative, radial, decreasing function belonging to $C_{0}^{\infty
}\left( 
\mathbb{R}
^{n}\right) $ and having the properties

\begin{enumerate}
\item[\textit{(i)}] $\varphi (x)=0$ if $\left\vert x\right\vert \geq 1,$

\item[\textit{(ii)}] $\dint\limits_{%
\mathbb{R}
^{n}}\varphi (x)dx=1$.
\end{enumerate}

Let $\varepsilon >0$. If the function $\varphi _{\varepsilon
}(x)=\varepsilon ^{-n}\varphi (\frac{x}{\varepsilon })$ is nonnegative,
belongs to $C_{0}^{\infty }\left( 
\mathbb{R}
^{n}\right) $, and satisfies

\begin{enumerate}
\item[\textit{(i)}] $\varphi _{\varepsilon }(x)=0$ if $\left\vert
x\right\vert \geq \varepsilon $ and

\item[\textit{(ii)}] $\dint\limits_{%
\mathbb{R}
^{n}}\varphi _{\varepsilon }(x)dx=1$,
\end{enumerate}

then $\varphi _{\varepsilon }$ is called a mollifier and we define the
convolution by%
\begin{equation*}
\varphi _{\varepsilon }\ast f(x)=\dint\limits_{%
\mathbb{R}
^{n}}\varphi _{\varepsilon }(x-y)f(y)dy.
\end{equation*}
\end{definition}

The following proposition was proved in \cite[Proposition 2.7]{Duo}.

\begin{proposition}
\label{proposition2}Let $\varphi _{\varepsilon }$ be a mollifier and $f\in
L_{loc}^{1}\left( 
\mathbb{R}
^{n}\right) $. Then 
\begin{equation*}
\underset{\varepsilon >0}{\sup }\left\vert \varphi _{\varepsilon }\ast
f(x)\right\vert \leq Mf(x).
\end{equation*}
\end{proposition}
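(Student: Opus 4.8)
The plan is to reduce the bound to the layer-cake (Cavalieri) representation of the radial decreasing profile of $\varphi _{\varepsilon }$, and then to recognize each resulting term as an average of $\left\vert f\right\vert $ over a ball, which is controlled by $Mf$. First I would pass to $\left\vert f\right\vert $: since $\left\vert \varphi _{\varepsilon }\ast f(x)\right\vert \leq \varphi _{\varepsilon }\ast \left\vert f\right\vert (x)$, it suffices to estimate $\varphi _{\varepsilon }\ast g(x)$ for $g=\left\vert f\right\vert \geq 0$, uniformly in $\varepsilon >0$ (the supremum over $\varepsilon $ then inherits the estimate). One may assume $Mf(x)<\infty $, otherwise there is nothing to prove.

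Next, note that $\varphi _{\varepsilon }$ inherits from $\varphi $ the properties of being nonnegative, radial, decreasing, and of having total integral $1$. Writing $\varphi _{\varepsilon }(z)=\psi (\left\vert z\right\vert )$ with $\psi $ nonincreasing on $[0,\infty )$, the superlevel set $\{z:\varphi _{\varepsilon }(z)>t\}$ is, for each $t>0$, a ball $B(0,r_{\varepsilon }(t))$ centered at the origin (empty once $t$ exceeds $\sup _{z}\varphi _{\varepsilon }(z)$). The layer-cake identity $\varphi _{\varepsilon }(z)=\int _{0}^{\infty }\chi _{B(0,r_{\varepsilon }(t))}(z)\,dt$ together with Tonelli's theorem (all integrands are nonnegative) yields
\begin{equation*}
\varphi _{\varepsilon }\ast g(x)=\int _{0}^{\infty }\left( \int _{B(x,r_{\varepsilon }(t))}g(y)\,dy\right) dt.
\end{equation*}

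Then I would bound the inner integral by $\left\vert B(0,r_{\varepsilon }(t))\right\vert \,Mf(x)$, directly from the definition of the Hardy-Littlewood maximal operator and the translation invariance of Lebesgue measure (the term being $0$ when $r_{\varepsilon }(t)=0$). Substituting this in and applying Tonelli once more in the reverse direction gives
\begin{equation*}
\varphi _{\varepsilon }\ast g(x)\leq Mf(x)\int _{0}^{\infty }\left\vert B(0,r_{\varepsilon }(t))\right\vert \,dt=Mf(x)\int _{\mathbb{R}^{n}}\varphi _{\varepsilon }(z)\,dz=Mf(x),
\end{equation*}
and taking the supremum over $\varepsilon >0$ gives the claim.

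The argument is essentially routine; the only points that deserve a word of care are that the superlevel sets of a radial decreasing function are balls (immediate once the monotone radial profile $\psi $ is made explicit, up to a Lebesgue-null sphere) and the measurability needed to apply Tonelli's theorem. An alternative, avoiding the Cavalieri device, is to approximate $\varphi _{\varepsilon }$ from below by finite nonnegative linear combinations of normalized indicators of concentric balls and pass to the limit by monotone convergence; but the layer-cake representation gives the shortest route.
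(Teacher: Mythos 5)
Your argument is correct: the reduction to $\left\vert f\right\vert$, the layer-cake representation of the radial decreasing function $\varphi_{\varepsilon}$, the Tonelli interchange, and the normalization $\int_{\mathbb{R}^{n}}\varphi_{\varepsilon}=1$ fit together without gaps, and the minor points you flag (superlevel sets being balls up to a null sphere, the $r_{\varepsilon}(t)=0$ case) are handled. The paper itself gives no proof of this proposition, citing \cite[Proposition 2.7]{Duo} instead; your layer-cake argument (equivalently, the approximation by normalized indicators of concentric balls you mention as an alternative) is precisely the standard proof given in that reference, so you have simply supplied the omitted details.
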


\begin{proposition}
\label{proposition3}(\cite{Ayd})Let $p\left( .\right) \in \mathcal{P}\left( 
\mathbb{R}
^{n}\right) ,$ $w\in A_{p\left( .\right) }$\ and $f\in L_{w}^{p(.)}(%
\mathbb{R}
^{n})$. Then $\varphi _{\varepsilon }\ast f\longrightarrow f$ in $%
L_{w}^{p(.)}(%
\mathbb{R}
^{n})$ as $\varepsilon \longrightarrow 0^{+}$.
\end{proposition}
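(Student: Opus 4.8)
The plan is to run the classical mollifier-approximation argument: reduce the claim to a dense subclass by means of a uniform-in-$\varepsilon$ boundedness estimate for the convolution operators. The key observation is that the operators $T_\varepsilon f=\varphi_\varepsilon\ast f$, $\varepsilon>0$, are bounded on $L_w^{p(.)}(\mathbb{R}^n)$ \emph{uniformly in $\varepsilon$}. Indeed, $f\in L_{loc}^1(\mathbb{R}^n)$ by Proposition \ref{proposition1} (the hypothesis $w\in A_{p(.)}$ guaranteeing $w^{-1/(p(.)-1)}\in L_{loc}^1(\mathbb{R}^n)$), so $\varphi_\varepsilon\ast f$ is well defined, and Proposition \ref{proposition2} gives $|\varphi_\varepsilon\ast f(x)|\le Mf(x)$ for a.e.\ $x$ and every $\varepsilon>0$. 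Since $p(.)\in\mathcal{P}(\mathbb{R}^n)$ and $w\in A_{p(.)}$, the maximal operator $M$ is bounded on $L_w^{p(.)}(\mathbb{R}^n)$, say $\|Mf\|_{L_w^{p(.)}(\mathbb{R}^n)}\le C_M\|f\|_{L_w^{p(.)}(\mathbb{R}^n)}$, so by monotonicity of the Luxemburg norm
\begin{equation*}
\sup_{\varepsilon>0}\|\varphi_\varepsilon\ast f\|_{L_w^{p(.)}(\mathbb{R}^n)}\le C_M\|f\|_{L_w^{p(.)}(\mathbb{R}^n)}.
\end{equation*}

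Next I would verify the convergence on test functions and then bootstrap. For $g\in C_c(\mathbb{R}^n)$ with $\mathrm{supp}\,g\subset B(0,R)$ one has $\mathrm{supp}(\varphi_\varepsilon\ast g)\subset K:=\overline{B(0,R+1)}$ for all $\varepsilon\le 1$, while $\varphi_\varepsilon\ast g\to g$ uniformly on $\mathbb{R}^n$ as $\varepsilon\to0^+$ by uniform continuity of $g$; hence, using the Remark ($\|\chi_K\|_{L_w^{p(.)}(\mathbb{R}^n)}<\infty$),
\begin{equation*}
\|\varphi_\varepsilon\ast g-g\|_{L_w^{p(.)}(\mathbb{R}^n)}\le\|\varphi_\varepsilon\ast g-g\|_{\infty}\,\|\chi_K\|_{L_w^{p(.)}(\mathbb{R}^n)}\longrightarrow 0.
\end{equation*}
Then, given $f\in L_w^{p(.)}(\mathbb{R}^n)$ and $\eta>0$, I would pick $g\in C_c(\mathbb{R}^n)$ with $\|f-g\|_{L_w^{p(.)}(\mathbb{R}^n)}<\eta$ and split $\varphi_\varepsilon\ast f-f=\varphi_\varepsilon\ast(f-g)+(\varphi_\varepsilon\ast g-g)+(g-f)$; combining this with the uniform bound above gives $\|\varphi_\varepsilon\ast f-f\|_{L_w^{p(.)}(\mathbb{R}^n)}\le(C_M+1)\eta+\|\varphi_\varepsilon\ast g-g\|_{L_w^{p(.)}(\mathbb{R}^n)}$, and letting $\varepsilon\to0^+$ and then $\eta\to0$ completes the argument.

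The step requiring the most care is the density of $C_c(\mathbb{R}^n)$ in $L_w^{p(.)}(\mathbb{R}^n)$, which the above tacitly uses. I would establish it in two stages: (a) truncating $f$ in height and multiplying by $\chi_{B(0,N)}$ reduces matters to bounded, compactly supported functions, the norm convergence following from the modular--norm inequalities recorded in the preliminaries together with the dominated convergence theorem applied to $\varrho_{p(.),w}$; (b) a bounded, compactly supported function is approximated in $L_w^{p(.)}$-norm by elements of $C_c(\mathbb{R}^n)$ via Lusin's theorem, the exceptional set being controlled once more through the Remark. Alternatively, this density is already available in the literature for Muckenhoupt-weighted variable exponent Lebesgue spaces and may be invoked directly.
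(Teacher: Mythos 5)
The paper does not actually prove this proposition; it is imported verbatim from \cite{Ayd}, so there is no internal argument to compare against. Your proof is the standard mollifier argument and it is correct: Proposition \ref{proposition1} (via $w\in A_{p(.)}$) makes $\varphi_{\varepsilon }\ast f$ well defined, Proposition \ref{proposition2} plus the boundedness of $M$ on $L_{w}^{p(.)}(\mathbb{R}^{n})$ (which is exactly what the hypotheses $p(.)\in \mathcal{P}(\mathbb{R}^{n})$, $w\in A_{p(.)}$ encode) gives the uniform bound $\sup_{\varepsilon >0}\left\Vert \varphi _{\varepsilon }\ast f\right\Vert _{L_{w}^{p(.)}}\leq C_{M}\left\Vert f\right\Vert _{L_{w}^{p(.)}}$, the estimate $\left\Vert \varphi _{\varepsilon }\ast g-g\right\Vert _{L_{w}^{p(.)}}\leq \left\Vert \varphi _{\varepsilon }\ast g-g\right\Vert _{\infty }\left\Vert \chi _{K}\right\Vert _{L_{w}^{p(.)}}$ is legitimate because the norm is solid and the Remark gives $\left\Vert \chi _{K}\right\Vert _{L_{w}^{p(.)}}<\infty $, and the three-term splitting closes the argument. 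One point you handled well and that deserves emphasis: you must not obtain the density of $C_{c}(\mathbb{R}^{n})$ from the paper's Corollary \ref{corollary1}, since that corollary is itself deduced from Proposition \ref{proposition3}; your independent two-step argument (truncation in height and support, using $p^{+}<\infty $ so that modular convergence via dominated convergence yields norm convergence, followed by Lusin's theorem with the exceptional set controlled by the local integrability of $w$) avoids this circularity and is sound. The net effect is that your proposal supplies a self-contained proof where the paper only cites the literature, at the modest cost of having to establish the density of $C_{c}(\mathbb{R}^{n})$ in $L_{w}^{p(.)}(\mathbb{R}^{n})$ along the way (which is also available directly in the references for Muckenhoupt-weighted variable exponent spaces, as you note).
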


As a direct consequence of Proposition \ref{proposition3} there follows.

\begin{corollary}
\label{corollary1}The class $C_{0}^{\infty }\left( 
\mathbb{R}
^{n}\right) $ denotes continuous functions having continuous derivatives of
all orders with compact support on $%
\mathbb{R}
^{n}.$ Now, let\textbf{\ }$p\left( .\right) \in \mathcal{P}\left( 
\mathbb{R}
^{n}\right) $ and $w\in A_{p\left( .\right) }$. Then $C_{0}^{\infty }\left( 
\mathbb{R}
^{n}\right) $ is dense in $L_{w}^{p(.)}(%
\mathbb{R}
^{n}).$
\end{corollary}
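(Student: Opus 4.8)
The plan is to approximate an arbitrary $f\in L_{w}^{p(.)}(\mathbb{R}^{n})$ in two stages: first by a compactly supported element of $L_{w}^{p(.)}(\mathbb{R}^{n})$, and then by a mollification of that element, which will automatically lie in $C_{0}^{\infty}(\mathbb{R}^{n})$. A preliminary observation is that, since $w\in A_{p(.)}$, the defining condition of the class $A_{p(.)}$ forces $\left\Vert \tfrac{1}{w}\right\Vert _{L^{p^{\shortmid}(.)/p(.)}(B)}<\infty$ on every ball $B$, that is $w^{-1/(p(.)-1)}\in L_{loc}^{1}(\mathbb{R}^{n})$; hence Proposition \ref{proposition1} yields $L_{w}^{p(.)}(\mathbb{R}^{n})\hookrightarrow L_{loc}^{1}(\mathbb{R}^{n})$, so that every $g\in L_{w}^{p(.)}(\mathbb{R}^{n})$ is locally integrable and the convolutions $\varphi_{\varepsilon}\ast g$ of Proposition \ref{proposition3} are well defined $C^{\infty}$ functions.

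For the truncation stage I would fix $f\in L_{w}^{p(.)}(\mathbb{R}^{n})$ and set $f_{k}=f\chi_{B(0,k)}$. Since $p^{+}<\infty$, membership in $L_{w}^{p(.)}(\mathbb{R}^{n})$ is equivalent to $\varrho_{p(.),w}(f)<\infty$, so $\varrho_{p(.),w}(f_{k})\leq\varrho_{p(.),w}(f)<\infty$ and each $f_{k}$ belongs to $L_{w}^{p(.)}(\mathbb{R}^{n})$ with support in the compact ball $\overline{B(0,k)}$. Because $|f|^{p(.)}w\in L^{1}(\mathbb{R}^{n})$,
\[
\varrho_{p(.),w}(f-f_{k})=\int_{\left\vert x\right\vert \geq k}\left\vert f(x)\right\vert ^{p(x)}w(x)\,dx\longrightarrow 0\qquad(k\to\infty),
\]
and once $k$ is large enough that this modular is $\leq 1$ the modular--norm inequalities recalled in Section 2 give $\left\Vert f-f_{k}\right\Vert _{L_{w}^{p(.)}(\mathbb{R}^{n})}\leq\varrho_{p(.),w}(f-f_{k})^{1/p^{+}}$, whence $\left\Vert f-f_{k}\right\Vert _{L_{w}^{p(.)}(\mathbb{R}^{n})}\to 0$.

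For the mollification stage I would apply Proposition \ref{proposition3} to each $f_{k}$, giving $\varphi_{\varepsilon}\ast f_{k}\to f_{k}$ in $L_{w}^{p(.)}(\mathbb{R}^{n})$ as $\varepsilon\to 0^{+}$; moreover $\varphi_{\varepsilon}\ast f_{k}\in C^{\infty}(\mathbb{R}^{n})$ and is supported in the $\varepsilon$-neighbourhood of $\overline{B(0,k)}$, so $\varphi_{\varepsilon}\ast f_{k}\in C_{0}^{\infty}(\mathbb{R}^{n})$. Given $\delta>0$, choosing first $k$ with $\left\Vert f-f_{k}\right\Vert _{L_{w}^{p(.)}(\mathbb{R}^{n})}<\delta/2$ and then $\varepsilon>0$ with $\left\Vert f_{k}-\varphi_{\varepsilon}\ast f_{k}\right\Vert _{L_{w}^{p(.)}(\mathbb{R}^{n})}<\delta/2$ produces an element of $C_{0}^{\infty}(\mathbb{R}^{n})$ within $\delta$ of $f$, which proves density.

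The only step that is not an immediate citation is the truncation estimate, and even there the substance reduces to turning the modular tail bound $\int_{\left\vert x\right\vert \geq k}|f|^{p(x)}w\,dx\to 0$ into norm convergence by means of the modular--norm relations; the support bookkeeping for $\varphi_{\varepsilon}\ast f_{k}$ and the convergence $\varphi_{\varepsilon}\ast f_{k}\to f_{k}$ are supplied wholesale by Proposition \ref{proposition3}. So the main (and mild) obstacle is organizing the two-stage approximation so that the intermediate functions are simultaneously in $L_{w}^{p(.)}(\mathbb{R}^{n})$ and compactly supported before the mollifier is applied.
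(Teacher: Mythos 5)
Your proof is correct and follows essentially the same route as the paper, whose entire argument is the assertion that the corollary is a direct consequence of Proposition \ref{proposition3} (mollifier convergence in $L_{w}^{p(.)}(\mathbb{R}^{n})$). The one ingredient you add --- the preliminary truncation $f_{k}=f\chi _{B(0,k)}$, with the modular--norm inequalities turning the tail bound into norm convergence, so that $\varphi _{\varepsilon }\ast f_{k}$ has compact support and genuinely lies in $C_{0}^{\infty }(\mathbb{R}^{n})$ --- is precisely the detail the paper's one-line proof leaves implicit, and your handling of it is sound.
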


\begin{definition}
Let $w=\left\{ w_{k}\right\} $ be a sequence of positive numbers. The
weighted variable sequence Lebesgue spaces $l_{p_{n}}(w)$ is defined by%
\begin{equation*}
l_{p_{n}}(w)=\left\{ x=\left\{ x_{k}\right\} :\exists \lambda
>0,\tsum\limits_{k=1}^{\infty }\left( \lambda \left\vert x_{k}\right\vert
\right) ^{p_{k}}w_{k}<\infty \right\}
\end{equation*}

equipped with the norm 
\begin{equation*}
\left\Vert x\right\Vert _{l_{p_{n}}(w)}:=\left\Vert xw^{\frac{1}{p_{n}}%
}\right\Vert _{l_{p_{n}}(w)}=\inf \left\{ \lambda
>0:\tsum\limits_{k=1}^{\infty }\left( \frac{\left\vert x_{k}\right\vert }{%
\lambda }\right) ^{p_{k}}w_{k}\leq 1\right\} .
\end{equation*}
\end{definition}

The following theorem was proved for weighted variable exponent sequence
spaces by \cite{Ba} and \cite{Gom}. Also, this theorem for a constant
exponential case was obtained by \cite{Hoh}.

\begin{theorem}
\label{theorem2}Let $\tciFourier \subset l_{p_{n}}(w)$, $p^{+}<\infty $.
Then the subset $\tciFourier $ is precompact in $l_{p_{n}}(w)$ if and only if

\begin{enumerate}
\item[\textit{(i)}] $\tciFourier $ is bounded, i.e. $\forall x=\left\{
x_{k}\right\} \in \tciFourier $, $\exists C>0,$ $\tsum\limits_{k=1}^{\infty
}\left\vert x_{k}\right\vert ^{p_{k}}w_{k}\leq C$

\item[\textit{(ii)}] For every $\varepsilon >0$ there is a $K=K(\varepsilon
)>0$ such that for every $x=\left\{ x_{k}\right\} $ in $\tciFourier $ 
\begin{equation*}
\left\Vert xw^{\frac{1}{p_{k}}}\right\Vert _{l_{p_{k}}(k>K)}<\varepsilon
\end{equation*}%
or equivalently%
\begin{equation*}
\tsum\limits_{k=K+1}^{\infty }\left\vert x_{k}\right\vert
^{p_{k}}w_{k}<\varepsilon .
\end{equation*}
\end{enumerate}
\end{theorem}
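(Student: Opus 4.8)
The plan is to prove the two implications separately, mirroring the classical Kolmogorov--Riesz argument for sequence spaces. For necessity, assume $\tciFourier$ is precompact in $l_{p_{n}}(w)$. Boundedness (i) is immediate from precompactness together with the modular--norm inequalities recorded in the preliminaries. For (ii), fix $\varepsilon>0$ and use that $\tciFourier$ admits a finite $\delta$-net $\{x^{(1)},\dots,x^{(m)}\}$ for a suitably small $\delta=\delta(\varepsilon)$. Each fixed element $x^{(j)}$ lies in $l_{p_{n}}(w)$, so its tail $\sum_{k=K+1}^{\infty}\bigl|x^{(j)}_{k}\bigr|^{p_{k}}w_{k}$ tends to $0$ as $K\to\infty$; choosing $K$ large enough to handle all $m$ of them simultaneously and then estimating an arbitrary $x\in\tciFourier$ by the nearest net point (controlling the tail of $x-x^{(j)}$ by its norm, hence by $\delta$, via the modular inequalities, and using that $p^{+}<\infty$ to pass between modular and norm on the truncated space) gives the tail bound.

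For sufficiency, assume (i) and (ii); the goal is to produce, for each $\varepsilon>0$, a finite $\varepsilon$-net. The idea is to apply Theorem 4 (the Hanche-Olsen--Holden criterion) with $W$ a finite-dimensional space. Given $\varepsilon>0$, use (ii) to pick $K=K(\varepsilon)$ so that the tail past $K$ contributes less than a prescribed amount in norm for every $x\in\tciFourier$. Define $\Phi:l_{p_{n}}(w)\to W:=\mathbb{R}^{K}$ by $\Phi(x)=(x_{1},\dots,x_{K})$, equipped with the norm inherited from the first $K$ coordinates of $l_{p_{n}}(w)$. By (i), $\Phi[\tciFourier]$ is a bounded subset of the finite-dimensional space $W$, hence totally bounded. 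If $\Phi(x)$ and $\Phi(y)$ are $\delta$-close in $W$, then $x$ and $y$ differ by at most $\delta$ on the first $K$ coordinates and by at most (roughly) $2$ times the tail bound beyond $K$; choosing the tail bound and $\delta$ small relative to $\varepsilon$ (again invoking the modular--norm inequalities and $p^{+}<\infty$ to combine the head and tail estimates additively in the modular and then convert to a norm estimate), we get $\|x-y\|_{l_{p_{n}}(w)}<\varepsilon$. Theorem 4 then yields that $\tciFourier$ is totally bounded, i.e. precompact.

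The main technical obstacle is the bookkeeping in converting between the modular $\varrho$ and the Luxemburg norm on the truncated (head and tail) pieces: since the norm is not additive, one must repeatedly pass to the modular, use its genuine additivity over the index split $k\le K$ versus $k>K$, and then pass back, each time paying the price of the $\min/\max$ with exponents $1/p^{-}$ and $1/p^{+}$. The hypothesis $p^{+}<\infty$ is exactly what makes these conversions finite and uniform over $\tciFourier$. Once the constants are chosen in the right order — first $K$ from (ii), then $\delta$ from the head estimate, all calibrated to a single target $\varepsilon$ — both directions follow from routine estimates, so I would organize the write-up around the two net constructions and relegate the modular manipulations to short displayed computations.
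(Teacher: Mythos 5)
Your proposal is correct, but note that the paper does not prove this statement at all: it is quoted as a known result, with the proof delegated to Bandaliyev \cite{Ba} and G\'{o}rka--Macios \cite{Gom} (constant exponent: Hanche-Olsen--Holden \cite{Hoh}). What you reconstruct is essentially the standard argument from those references: necessity via a finite net plus the vanishing tails of the finitely many net points, sufficiency via the projection $\Phi(x)=(x_{1},\dots,x_{K})$ onto a finite-dimensional space combined with the abstract totally-boundedness criterion (which in this paper's numbering is the theorem attributed to \cite{Hoh} with the mapping $\Phi$, i.e.\ Theorem 2, not Theorem 4 -- Theorem 4 is the statement being proved). Your head/tail splitting and the choice of constants are sound, and $p^{+}<\infty$ is indeed what guarantees both that the modular of each element is finite (so tails of net points vanish) and that modular--norm conversions are uniform. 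One simplification worth making in the write-up: most of the modular bookkeeping you anticipate is avoidable, because the Luxemburg norm is solid, so the tail norm of $x$ is dominated by the full norm; in the necessity direction the triangle inequality
\begin{equation*}
\bigl\Vert xw^{\frac{1}{p_{k}}}\bigr\Vert _{l_{p_{k}}(k>K)}\leq \bigl\Vert
(x-x^{(j)})w^{\frac{1}{p_{k}}}\bigr\Vert _{l_{p_{k}}}+\bigl\Vert x^{(j)}w^{
\frac{1}{p_{k}}}\bigr\Vert _{l_{p_{k}}(k>K)}
\end{equation*}
gives the tail bound directly from the net, and in the sufficiency direction $\left\Vert x-y\right\Vert \leq \left\Vert (x-y)\chi _{\{k\leq K\}}\right\Vert +\left\Vert x\chi _{\{k>K\}}\right\Vert +\left\Vert y\chi _{\{k>K\}}\right\Vert$ combines head and tail without passing through the modular; the only place the modular--norm inequalities (and hence $p^{+}<\infty$) are genuinely needed is in translating condition \textit{(i)} and the ``or equivalently'' form of \textit{(ii)} between modular and norm.
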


The following theorem is an extension to the weighted variable exponent
Lebesgue spaces of the classical Riesz-Kolmogorov Theorem, see \cite{Ba}.

\begin{theorem}
\label{theorem3}Let $p\left( .\right) \in P^{\log }(%
\mathbb{R}
^{n})$ and $1<p^{-}\leq p^{+}<\infty $. Assume that $w$ is a weight function
and $w\in A_{p(.)}$. Then $\tciFourier \subset L_{w}^{p(.)}(%
\mathbb{R}
^{n})$ is relatively compact if and only if

\begin{enumerate}
\item[\textit{(i)}] $\tciFourier $ is bounded in $L_{w}^{p(.)}(%
\mathbb{R}
^{n})$, i.e. $\underset{f\in \tciFourier }{\sup }\left\Vert f\right\Vert
_{L_{w}^{p(.)}(%
\mathbb{R}
^{n})}<\infty $

\item[\textit{(ii)}] For every $\varepsilon >0$ there is a $\gamma >0$ such
that for all $f\in \tciFourier $ 
\begin{equation*}
\left\Vert f\right\Vert _{L_{w}^{p(.)}\left( \left\vert x\right\vert >\gamma
\right) }<\varepsilon
\end{equation*}%
or equivalently%
\begin{equation*}
\dint\limits_{\left\vert x\right\vert >\gamma }\left\vert f(x)\right\vert
^{p(x)}w(x)dx<\varepsilon .
\end{equation*}

\item[\textit{(iii)}] $\underset{\varepsilon \longrightarrow 0^{+}}{\lim }%
\left\Vert f\ast \varphi _{\varepsilon }-f\right\Vert _{L_{w}^{p(.)}(%
\mathbb{R}
^{n})}=0$ uniformly for $f\in \tciFourier $, where $\varphi _{\varepsilon }$
is a mollifier function.
\end{enumerate}
\end{theorem}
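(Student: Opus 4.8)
The plan is to prove the two implications separately, using Theorem 4 (the Hanche-Olsen--Holden abstract criterion via a totally bounded image) as the main engine for sufficiency.

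First I would establish \textbf{necessity}. Suppose $\tciFourier$ is relatively compact in $L_{w}^{p(.)}(\mathbb{R}^{n})$. Boundedness (i) is immediate since relatively compact sets in a metric space are totally bounded, hence bounded. For (ii), I would use a compactness/finite $\varepsilon$-net argument: given $\varepsilon>0$, cover $\tciFourier$ by finitely many balls $B(f_{j},\varepsilon/2)$; for each $f_{j}\in L_{w}^{p(.)}$ the tail $\|f_{j}\|_{L_{w}^{p(.)}(|x|>\gamma)}$ tends to $0$ as $\gamma\to\infty$ (by absolute continuity of the modular / dominated convergence together with the modular--norm relations in Definition 6), so choosing $\gamma$ large enough to handle all finitely many $f_{j}$ and using the triangle inequality gives the uniform tail estimate. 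For (iii), similarly reduce to the finite net: for each fixed $f_{j}$, Proposition \ref{proposition3} gives $\|\varphi_{\varepsilon}\ast f_{j}-f_{j}\|_{L_{w}^{p(.)}}\to 0$; combined with the fact that convolution with $\varphi_{\varepsilon}$ is uniformly bounded on $L_{w}^{p(.)}$ — which follows from Proposition \ref{proposition2} ($\sup_{\varepsilon}|\varphi_{\varepsilon}\ast f|\le Mf$) and $w\in A_{p(.)}$ so that $M$ is bounded — the triangle inequality propagates the limit uniformly over $\tciFourier$.

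Next, \textbf{sufficiency}, which is the substantive direction. Assuming (i)--(iii), I would verify the hypotheses of Theorem 4 with $X=\tciFourier$. Fix $\varepsilon>0$. Using (ii) choose $\gamma$ so that the tail beyond $|x|>\gamma$ is $<\varepsilon$ uniformly, and using (iii) choose a single mollifier parameter $\eta$ so that $\|\varphi_{\eta}\ast f - f\|_{L_{w}^{p(.)}}<\varepsilon$ uniformly in $f\in\tciFourier$. Define $\Phi(f) = (\varphi_{\eta}\ast f)\cdot \chi_{B(0,\gamma)}$, viewed as an element of, say, $C(\overline{B(0,\gamma)})$ (or just $L^{1}(B(0,\gamma))$). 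I would then argue that $\Phi[\tciFourier]$ is totally bounded: the family $\{\varphi_{\eta}\ast f : f\in\tciFourier\}$ restricted to $\overline{B(0,\gamma)}$ is uniformly bounded (from (i), Proposition \ref{proposition2} and boundedness of $M$, plus the local embedding $L_{w}^{p(.)}\hookrightarrow L_{loc}^{1}$ from Proposition \ref{proposition1}) and equicontinuous (since $\varphi_{\eta}\in C_{0}^{\infty}$ has bounded derivatives, so $|\nabla(\varphi_{\eta}\ast f)|\le \|\nabla\varphi_{\eta}\|_{\infty}\|f\chi_{K}\|_{L^{1}}$ on compacta), hence precompact in $C(\overline{B(0,\gamma)})$ by Arzelà--Ascoli, and the restriction map into that space is continuous. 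Finally, I would check the "closeness" hypothesis of Theorem 4: if $d(\Phi(f),\Phi(g))$ is small in the chosen metric on $W$, then $\|\Phi(f)-\Phi(g)\|_{L_{w}^{p(.)}}$ is small (controlling the $C$-norm on a bounded set times the finite quantity $\|\chi_{B(0,\gamma)}\|_{L_{w}^{p(.)}}$, which is finite by Remark 1), and then
\[
\|f-g\|_{L_{w}^{p(.)}} \le \|f-\varphi_{\eta}\ast f\| + \|(\varphi_{\eta}\ast f)\chi_{|x|>\gamma}\| + \|\Phi(f)-\Phi(g)\| + \|(\varphi_{\eta}\ast g)\chi_{|x|>\gamma}\| + \|\varphi_{\eta}\ast g - g\|,
\]
where the first and last terms are $<\varepsilon$ by the choice of $\eta$, the tail terms are controlled by (ii) applied to $f,g$ (after noting $\|(\varphi_{\eta}\ast f)\chi_{|x|>\gamma}\|$ can be bounded by the tail of $f$ using the $A_{p(.)}$ boundedness of $M$, possibly enlarging $\gamma$), and the middle term is small by hypothesis. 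Thus $\tciFourier$ is totally bounded, and since $L_{w}^{p(.)}(\mathbb{R}^{n})$ is complete (it is a Banach space), Theorem 1 gives that $\overline{\tciFourier}$ is compact.

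The main obstacle I anticipate is the estimate $\|(\varphi_{\eta}\ast f)\,\chi_{\{|x|>\gamma\}}\|_{L_{w}^{p(.)}} \lesssim \|f\|_{L_{w}^{p(.)}(\{|x|>\gamma'\})}$ for a suitable $\gamma' < \gamma$: one must show that convolution with a compactly supported mollifier essentially does not move mass from the far tail into the region $|x|>\gamma$, while simultaneously keeping the bound in the \emph{weighted variable exponent} norm — here the lack of translation invariance of $L_{w}^{p(.)}$ and the failure of the naive Young inequality (noted in the introduction) mean I cannot simply use $\|\varphi_{\eta}\ast f\|\le\|\varphi_{\eta}\|_{1}\|f\|$; instead the pointwise bound $|\varphi_{\eta}\ast f(x)|\le Mf(x)$ together with $w\in A_{p(.)}$ must be used, and care is needed because $M$ is a global operator so localization to $\{|x|>\gamma\}$ requires splitting $f = f\chi_{\{|x|\le\gamma'\}} + f\chi_{\{|x|>\gamma'\}}$ and observing that for $|x|>\gamma$ with $\eta$ small, $\varphi_{\eta}\ast(f\chi_{\{|x|\le\gamma'\}})(x)=0$ once $\gamma-\gamma'>\eta$. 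This splitting, done carefully, is what makes the tail of the mollified function controllable by the tail of $f$.
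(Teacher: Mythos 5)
You should first note that the paper itself contains no proof of this theorem: it is quoted from Bandaliyev \cite{Ba}, and the nearest internal argument is the proof of the amalgam analogue (Theorem \ref{theorem6}), whose sufficiency half is in turn delegated to Rafeiro's technique \cite{Raf}. Your proposal is, as far as I can check, a correct self-contained proof along exactly those standard lines, so there is no substantive divergence to report, only a comparison with the cited route. Your necessity argument (finite $\varepsilon$-net; tail decay of each fixed $f_{j}$ from finiteness of the modular, dominated convergence and the norm--modular inequalities; uniform boundedness of $f\mapsto\varphi_{\varepsilon}\ast f$ via $\sup_{\varepsilon}\left\vert \varphi_{\varepsilon}\ast f\right\vert \leq Mf$ and the $A_{p(.)}$-boundedness of $M$) is sound, and is essentially the same mechanism the paper uses for Theorem \ref{theorem6}, except that the paper approximates by compactly supported functions rather than by net elements. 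For sufficiency you correctly identified, and correctly resolved, the one genuinely delicate point in the weighted variable exponent setting: since translation invariance and the naive Young inequality fail, the tail bound $\left\Vert (\varphi_{\eta}\ast f)\chi_{\{\left\vert x\right\vert >\gamma \}}\right\Vert _{L_{w}^{p(.)}}\leq C\left\Vert f\right\Vert _{L_{w}^{p(.)}(\left\vert x\right\vert >\gamma ^{\prime })}$ must come from the support splitting $f=f\chi_{\{\left\vert x\right\vert \leq \gamma ^{\prime }\}}+f\chi_{\{\left\vert x\right\vert >\gamma ^{\prime }\}}$ with $\gamma -\gamma ^{\prime }>\eta $ together with the pointwise maximal bound, not from $\left\Vert \varphi _{\eta }\right\Vert _{1}$; this is exactly how the cited proofs proceed. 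The Arzel\`{a}--Ascoli step for $\{(\varphi _{\eta }\ast f)\mid _{\overline{B(0,\gamma )}}\}$ rests on the local embedding $L_{w}^{p(.)}\hookrightarrow L_{loc}^{1}$ of Proposition \ref{proposition1}, whose hypothesis $w^{-\frac{1}{p(.)-1}}\in L_{loc}^{1}$ does follow from $w\in A_{p(.)}$ as defined here, and the return to the $L_{w}^{p(.)}$ norm uses $\left\Vert \chi _{B(0,\gamma )}\right\Vert _{L_{w}^{p(.)}}<\infty $ from the first remark of Section 2; both steps are fine. Two cosmetic corrections: the abstract totally-bounded-image criterion you invoke is Theorem 2 of this paper (Theorem 4, label theorem2, is the weighted sequence-space criterion), and you should state the order of choices explicitly (first $\gamma ^{\prime }$ from (ii), then $\gamma =\gamma ^{\prime }+1$, then $\eta <1$ from (iii)), which your sketch already permits.
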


The following theorem can be proved for the spaces $L_{w}^{p(.)}(%
\mathbb{R}
^{n})$ as Theorem 3 and Theorem 4 in \cite{Gom}.

\begin{theorem}
\label{theorem4}Let $\tciFourier \subset L_{w}^{p(.)}(%
\mathbb{R}
^{n})$, $p\left( .\right) \in P^{\log }(%
\mathbb{R}
^{n})$, $1<p^{-}\leq p^{+}<\infty $ and $w\in A_{p(.)}$. Then the family $%
\tciFourier \subset L_{w}^{p(.)}(%
\mathbb{R}
^{n})$ is precompact in $L_{w}^{p(.)}(%
\mathbb{R}
^{n})$ if and only if

\begin{enumerate}
\item[\textit{(i)}] $\tciFourier $ is bounded in $L^{p(.)}(%
\mathbb{R}
^{n},w)$, i.e. $\underset{f\in \tciFourier }{\sup }\left\Vert f\right\Vert
_{L_{w}^{p(.)}(%
\mathbb{R}
^{n})}<\infty $

\item[\textit{(ii)}] For every $\varepsilon >0$ there is a $R>0$ such that
for all $f\in \tciFourier $%
\begin{equation*}
\dint\limits_{\left\vert x\right\vert >R}\left\vert f(x)\right\vert
^{p(x)}w(x)dx<\varepsilon
\end{equation*}

\item[\textit{(iii)}] For every $\varepsilon >0$ there is a $\delta >0$ such
that for all $f\in \tciFourier $ and $\forall \left\vert h\right\vert
<\delta $%
\begin{equation*}
\left\Vert f_{h}-f\right\Vert _{L_{w}^{p(.)}(%
\mathbb{R}
^{n})}<\varepsilon
\end{equation*}%
or equivalently%
\begin{equation*}
\dint\limits_{%
\mathbb{R}
^{n}}\left\vert f_{h}(x)-f(x)\right\vert ^{p(x)}w(x)dx<\varepsilon
\end{equation*}%
where $f_{h}(x)=\left( f\right) _{B\left( x,h\right) }=\frac{1}{\left\vert
B\left( x,h\right) \right\vert }\int_{B\left( x,h\right) }f(t)dt$.
\end{enumerate}
\end{theorem}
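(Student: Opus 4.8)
The plan is to mirror the strategy used to prove the classical Kolmogorov--Riesz criterion (as in Theorem \ref{theorem3}), but replace the mollification characterization by the ball-average characterization, exactly as Górka and Macios do in \cite{Gom}. In one direction (necessity), suppose $\tciFourier$ is precompact in $L_{w}^{p(.)}(\mathbb{R}^{n})$. Then $\tciFourier$ is bounded, which gives (i). For (ii) and (iii) one uses that a precompact set admits a finite $\varepsilon$-net $\{f_{1},\dots,f_{N}\}$; since each $f_{j}\in L_{w}^{p(.)}$ satisfies $\int_{|x|>R}|f_{j}|^{p(x)}w\,dx\to 0$ as $R\to\infty$ (tail smallness of a single function, obtainable from absolute continuity of the modular), one picks $R$ uniform over the finite net and then transfers to all $f\in\tciFourier$ via the triangle inequality in modular-to-norm form, using the norm-modular relations quoted in the Definition of $L_{w}^{p(.)}$. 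Property (iii) is handled the same way: for a fixed $g\in C_{0}^{\infty}(\mathbb{R}^{n})$ one has $\|g_{h}-g\|_{L_{w}^{p(.)}}\to 0$ as $|h|\to 0$ (uniform continuity of $g$, compact support, local integrability of $w$), so approximating each net element $f_{j}$ by such a $g_{j}$ (Corollary \ref{corollary1}) and combining the three small quantities yields a uniform $\delta$.

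For the sufficiency direction, assume (i), (ii), (iii); the goal is to verify that $\tciFourier$ is totally bounded, for which I would invoke Theorem 4 (the Hanche-Olsen--Holden reduction via a map $\Phi$ to a totally bounded image). First, (ii) lets us cut off: fix $\varepsilon>0$, choose $R$ so that $\int_{|x|>R}|f|^{p(x)}w\,dx<\varepsilon$ for all $f$, and work on the ball $B(0,R)$. On that fixed ball the averaging operator $f\mapsto f_{h}$ with small $h$ plays the role of the mollifier: by Proposition \ref{proposition2} (or its elementary analogue, $|f_{h}(x)|\le Mf(x)$ since ball averages are dominated by the maximal function) together with $w\in A_{p(.)}$ and $p(.)\in P^{\log}$, the maximal operator is bounded on $L_{w}^{p(.)}$, so the family $\{f_{h}:f\in\tciFourier\}$ is bounded and, restricted to $B(0,R)$, is equicontinuous (averages over balls of a fixed radius $h$ have a uniform modulus of continuity controlled by $\|f\|_{L_{w}^{p(.)}}$ and the local $A_{p(.)}$ bound on $w^{-1}$). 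Hence $\{f_{h}\chi_{B(0,R)}\}$ is a precompact family in $C(\overline{B(0,R)})$ by Arzelà--Ascoli, and therefore precompact in $L_{w}^{p(.)}(B(0,R))$ as well (using Remark 1 and $\|\chi_{B(0,R)}\|_{L_{w}^{p(.)}}<\infty$). Taking $\Phi(f)=f_{h}\chi_{B(0,R)}$ into this finite-dimensional-approximable space, condition (iii) guarantees $\|f-\Phi(f)\|_{L_{w}^{p(.)}}$ is uniformly small, which is precisely the hypothesis needed in Theorem 4 to conclude total boundedness of $\tciFourier$.

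I expect the main obstacle to be the equicontinuity estimate for the averaged family on the fixed ball, because the underlying space is \emph{not} translation invariant and the Young convolution inequality fails for $L_{w}^{p(.)}$ (as stressed in the Introduction). Concretely, bounding $|f_{h}(x)-f_{h}(x')|$ for $x,x'\in B(0,R)$ requires estimating $\frac{1}{|B(x,h)|}\int_{B(x,h)\triangle B(x',h)}|f|\,dt$, and one must control the $L^{1}$-norm of $f$ over the symmetric difference purely in terms of $\|f\|_{L_{w}^{p(.)}}$ and geometric data. The tool for this is the embedding $L_{w}^{p(.)}\hookrightarrow L_{loc}^{1}$ from Proposition \ref{proposition1} (valid since $w\in A_{p(.)}$ forces $w^{-1/(p(.)-1)}\in L_{loc}^{1}$), together with Hölder's inequality (Theorem with $w^{\ast}=w^{1-q(.)}$) applied on a large ball containing all the relevant symmetric differences, so that $\int_{E}|f|\,dt\le C\|f\|_{L_{w}^{p(.)}}\|\chi_{E}\|_{L_{w^{\ast}}^{q(.)}}$ and $\|\chi_{E}\|_{L_{w^{\ast}}^{q(.)}}\to 0$ as $|E|\to 0$ by absolute continuity of the norm. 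Once this modulus-of-continuity bound is in hand, the rest is a routine assembly of the three $\varepsilon$-pieces, and the reduction Theorem 4 closes the argument; I would simply remark that the proof follows the scheme of Theorems 3 and 4 in \cite{Gom} with these substitutions, rather than reproducing every estimate.
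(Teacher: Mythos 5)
Your outline is correct and follows essentially the route the paper itself relies on: the paper gives no proof of this theorem at all, stating only that it "can be proved ... as Theorem 3 and Theorem 4 in \cite{Gom}", and your sketch (tail cut-off from \textit{(ii)}, ball averages $f_{h}$ dominated by $Mf$ with $M$ bounded since $p(.)\in P^{\log}$ and $w\in A_{p(.)}$, equicontinuity of $\{f_{h}\}$ on a fixed ball via H\"older against $\left\Vert \chi _{E}\right\Vert _{L_{w^{\ast }}^{q(.)}}$, Arzel\`{a}--Ascoli, and the reduction lemma of \cite{Hoh}) is exactly that G\'{o}rka--Macios scheme, filled in with the details the paper omits. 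One small point to make explicit: in the necessity of \textit{(iii)} the transfer from the finite net also needs the uniform operator bound $\left\Vert (f-g)_{h}\right\Vert _{L_{w}^{p(.)}}\leq C\left\Vert f-g\right\Vert _{L_{w}^{p(.)}}$ coming from $|(f-g)_{h}|\leq M(f-g)$, which you invoke only in the sufficiency half.
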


\section{\textbf{Weighted Variable Exponent Amalgam Spaces}}

\begin{definition}
The space $L_{loc,w}^{p(.)}\left( 
\mathbb{R}
^{n}\right) $ is to be space of functions on $%
\mathbb{R}
^{n}$ such that $f$ restricted to any compact subset $K$ of $%
\mathbb{R}
^{n}$ belongs to $L_{w}^{p(.)}(%
\mathbb{R}
^{n}).$ Note that the embeddings $L_{w}^{p(.)}\left( 
\mathbb{R}
^{n}\right) \hookrightarrow L_{loc,w}^{p(.)}\left( 
\mathbb{R}
^{n}\right) \hookrightarrow L_{loc}^{1}\left( 
\mathbb{R}
^{n}\right) $ hold.
\end{definition}

Let $1\leq p(.),q<\infty $ and $J_{k}=\left[ k,k+1\right) $, $k\in 
\mathbb{Z}
.$ The weighted variable exponent amalgam spaces $\left( L_{w}^{p(.)},\ell
^{q}\right) $ are defined by%
\begin{equation*}
\left( L_{w}^{p(.)},\ell ^{q}\right) =\left\{ f\in L_{loc,w}^{p(.)}\left( 
\mathbb{R}
\right) :\left\Vert f\right\Vert _{\left( L_{w}^{p(.)},\ell ^{q}\right)
}<\infty \right\} ,
\end{equation*}%
where 
\begin{equation*}
\left\Vert f\right\Vert _{\left( L_{w}^{p(.)},\ell ^{q}\right) }=\left(
\tsum\limits_{k\in 
\mathbb{Z}
}\left\Vert f\chi _{J_{k}}\right\Vert _{L_{w}^{p(.)}(%
\mathbb{R}
)}^{q}\right) ^{\frac{1}{q}}.
\end{equation*}%
It is well known that $\left( L_{w}^{p(.)},\ell ^{q}\right) $ is a Banach
space and does not depend on the particular choice of $J_{k}$, that is, $%
J_{k}$ can be equal to $\left[ k,k+1\right) $, $[k,k+1]$ or $(k,k+1)$. If
the weight $w$ is a constant \ function, then the space $\left(
L_{w}^{p(.)},\ell ^{q}\right) $ coincides with $\left( L^{p(.)},\ell
^{q}\right) $. Moreover, If the exponent $p(.)$ and the weight $w$ are
constant functions, then we have the usual amalgam space $\left( L^{p},\ell
^{q}\right) $, see \cite{Ady}, \cite{Ho}, \cite{Wie}. The dual space of $%
\left( L_{w}^{p(.)},\ell ^{q}\right) $ is isometrically isomorphic to $%
\left( L_{w^{\ast }}^{r(.)},\ell ^{t}\right) $ where $\frac{1}{p(.)}+\frac{1%
}{r(.)}=1$, $\frac{1}{q}+\frac{1}{t}=1$ and $w^{\ast }=w^{1-r\left( .\right)
}$. Also, the space $\left( L_{w}^{p(.)},\ell ^{q}\right) $ is reflexive.
Moreover, it is known that $\left( L_{w}^{p(.)},\ell ^{q}\right) $ is a
solid Banach function space, see \cite{Ag}.

In 2014, Meskhi and Zaighum \cite{MesZa} proved the boundedness of maximal
operator for weighted variable exponent amalgam spaces under some
conditions, see \cite[Theorem 3.3]{MesZa}, \cite[Theorem 3.4]{MesZa}.
Throughout this paper, we assume that $p\left( .\right) \in P^{\log }(%
\mathbb{R}
^{n})$, $1<p^{-}\leq p\left( .\right) \leq p^{+}<\infty $, $w\in A_{p(.)}$
and the maximal operator is bounded in weighted variable exponent amalgam
spaces.

\begin{remark}
Let $\frac{1}{p(.)}+\frac{1}{r(.)}=1$ and $\frac{1}{q}+\frac{1}{s}=1$. Then
there exists a constant $C>0$ such that 
\begin{equation*}
\left\Vert fg\right\Vert _{\left( L^{1},\ell ^{1}\right) }\leq C\left\Vert
f\right\Vert _{\left( L_{w}^{p(.)},\ell ^{q}\right) }\left\Vert g\right\Vert
_{\left( L_{w^{\ast }}^{r(.)},\ell ^{s}\right) }
\end{equation*}%
for $f\in \left( L_{w}^{p(.)},\ell ^{q}\right) $ and $g\in \left( L_{w^{\ast
}}^{r(.)},\ell ^{s}\right) $. Moreover, the expression 
\begin{equation*}
\left( L_{w}^{p(.)},\ell ^{q}\right) \left( L_{w^{\ast }}^{r(.)},\ell
^{s}\right) \subset \left( L^{1},\ell ^{1}\right) =L^{1}
\end{equation*}%
is satisfied.
\end{remark}

\begin{proof}
Let $f\in \left( L_{w}^{p(.)},\ell ^{q}\right) $ and $g\in \left( L_{w^{\ast
}}^{r(.)},\ell ^{s}\right) $. Using H\"{o}lder inequality for variable
exponent Lebesgue and classical sequences spaces, we have%
\begin{eqnarray*}
\left\Vert fg\right\Vert _{\left( L^{1},\ell ^{1}\right) }
&=&\tsum\limits_{k\in 
\mathbb{Z}
}\left\Vert fg\chi _{J_{k}}\right\Vert _{L^{1}(%
\mathbb{R}
)} \\
&\leq &C\tsum\limits_{k\in 
\mathbb{Z}
}\left( \left\Vert f\chi _{J_{k}}\right\Vert _{L_{w}^{p(.)}(%
\mathbb{R}
)}\left\Vert g\chi _{J_{k}}\right\Vert _{L_{w^{\ast }}^{r(.)}(%
\mathbb{R}
)}\right) \\
&\leq &C\left( \tsum\limits_{k\in 
\mathbb{Z}
}\left\Vert f\chi _{J_{k}}\right\Vert _{L_{w}^{p(.)}(%
\mathbb{R}
)}^{q}\right) ^{\frac{1}{q}}\left( \tsum\limits_{k\in 
\mathbb{Z}
}\left\Vert g\chi _{J_{k}}\right\Vert _{L_{w^{\ast }}^{r(.)}(%
\mathbb{R}
)}^{s}\right) ^{\frac{1}{s}} \\
&\leq &C\left\Vert f\right\Vert _{\left( L_{w}^{p(.)},\ell ^{q}\right)
}\left\Vert g\right\Vert _{\left( L_{w^{\ast }}^{r(.)},\ell ^{s}\right) }.
\end{eqnarray*}%
This completes the proof.
\end{proof}

\begin{definition}
(\cite{Ady},\cite{Sq})$L_{c,w}^{p(.)}\left( 
\mathbb{R}
\right) $ denotes the functions $f$ in $L_{w}^{p(.)}(%
\mathbb{R}
)$ such that supp$f\subset 
\mathbb{R}
$ is compact, that is, 
\begin{equation*}
L_{c,w}^{p(.)}\left( 
\mathbb{R}
\right) =\left\{ f\in L_{w}^{p(.)}(%
\mathbb{R}
):\text{supp}f\text{ compact}\right\} .
\end{equation*}%
Let $K\subset 
\mathbb{R}
$ be given. The cardinality of the set 
\begin{equation*}
S(K)=\left\{ J_{k}:J_{k}\cap K\neq \varnothing \right\}
\end{equation*}%
is denoted by $\left\vert S(K)\right\vert $, where $\left\{ J_{k}\right\}
_{k\in 
\mathbb{Z}
}$ is a collection of intervals.
\end{definition}

\begin{proposition}
\label{proposition4}(\cite{Ady})If $g$ belongs to $L_{c,w}^{p(.)}\left( 
\mathbb{R}
\right) $, then

\begin{enumerate}
\item[\textit{(i)}] $\left\Vert g\right\Vert _{\left( L_{w}^{p(.)},\ell
^{q}\right) }\leq \left\vert S(K)\right\vert ^{\frac{1}{q}}\left\Vert
g\right\Vert _{L_{w}^{p(.)}(%
\mathbb{R}
)}$ for $1\leq q<\infty ,$

\item[\textit{(ii)}] $\left\Vert g\right\Vert _{\left( L_{w}^{p(.)},\ell
^{\infty }\right) }\leq \left\vert S(K)\right\vert \left\Vert g\right\Vert
_{L_{w}^{p(.)}(%
\mathbb{R}
)}$ for $q=\infty $,

\item[\textit{(iii)}] $L_{c,w}^{p(.)}\left( 
\mathbb{R}
\right) \subset \left( L_{w}^{p(.)},\ell ^{q}\right) $ for $1\leq q\leq
\infty ,$
\end{enumerate}

where $K$ is the compact support of $g$.
\end{proposition}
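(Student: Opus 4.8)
The plan is to prove each of the three items directly from the definitions of the amalgam norm and the structure of $L_{c,w}^{p(.)}(\mathbb{R})$. First I would observe that since $g \in L_{c,w}^{p(.)}(\mathbb{R})$ with compact support $K$, the function $g\chi_{J_k}$ vanishes identically unless $J_k \cap K \neq \varnothing$, so the sum defining $\|g\|_{(L_w^{p(.)},\ell^q)}$ collapses to a finite sum over the $|S(K)|$ intervals that meet $K$. This is the only genuinely substantive point and it is immediate from the support condition.

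For item (i), I would then write
\begin{equation*}
\left\Vert g\right\Vert _{\left( L_{w}^{p(.)},\ell ^{q}\right) }^{q}=\sum_{k\in \mathbb{Z}}\left\Vert g\chi _{J_{k}}\right\Vert _{L_{w}^{p(.)}(\mathbb{R})}^{q}=\sum_{J_{k}\cap K\neq \varnothing }\left\Vert g\chi _{J_{k}}\right\Vert _{L_{w}^{p(.)}(\mathbb{R})}^{q},
\end{equation*}
and bound each term $\left\Vert g\chi _{J_{k}}\right\Vert _{L_{w}^{p(.)}(\mathbb{R})}$ by $\left\Vert g\right\Vert _{L_{w}^{p(.)}(\mathbb{R})}$, which holds because $\left( L_{w}^{p(.)},\ell ^{q}\right)$ and hence $L_w^{p(.)}(\mathbb{R})$ is a solid Banach function space and $|g\chi_{J_k}| \le |g|$ pointwise. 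Since there are exactly $|S(K)|$ nonzero terms, we get $\left\Vert g\right\Vert _{\left( L_{w}^{p(.)},\ell ^{q}\right) }^{q}\leq |S(K)|\left\Vert g\right\Vert _{L_{w}^{p(.)}(\mathbb{R})}^{q}$, and taking $q$-th roots yields (i). In particular this also shows the norm is finite, so $g \in \left( L_{w}^{p(.)},\ell ^{q}\right)$.

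For item (ii), I would argue analogously: $\left\Vert g\right\Vert _{\left( L_{w}^{p(.)},\ell ^{\infty }\right) }=\sup_{k}\left\Vert g\chi _{J_{k}}\right\Vert _{L_{w}^{p(.)}(\mathbb{R})}$, which is a supremum over (effectively) the $|S(K)|$ nonzero terms, each bounded by $\left\Vert g\right\Vert _{L_{w}^{p(.)}(\mathbb{R})}$; hence $\left\Vert g\right\Vert _{\left( L_{w}^{p(.)},\ell ^{\infty }\right) }\leq \left\Vert g\right\Vert _{L_{w}^{p(.)}(\mathbb{R})}\leq |S(K)|\left\Vert g\right\Vert _{L_{w}^{p(.)}(\mathbb{R})}$ since $|S(K)|\ge 1$ whenever $g$ is not a.e.\ zero (and the inequality is trivial otherwise). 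Finally, item (iii) is just the combination of (i) and (ii): for every $g \in L_{c,w}^{p(.)}(\mathbb{R})$ and every $1\le q\le \infty$, the bounds just established show $\left\Vert g\right\Vert _{\left( L_{w}^{p(.)},\ell ^{q}\right) }<\infty$, so $g \in \left( L_{w}^{p(.)},\ell ^{q}\right)$, giving the claimed inclusion $L_{c,w}^{p(.)}\left( \mathbb{R}\right) \subset \left( L_{w}^{p(.)},\ell ^{q}\right)$.

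There is no real obstacle here; the proof is essentially bookkeeping. The one point to state carefully is the pointwise domination $\left\Vert g\chi _{J_{k}}\right\Vert _{L_{w}^{p(.)}(\mathbb{R})}\leq \left\Vert g\right\Vert _{L_{w}^{p(.)}(\mathbb{R})}$, which follows from the lattice (solidity) property of the weighted variable exponent Lebesgue space, equivalently from monotonicity of the modular $\varrho_{p(.),w}$ in $|f|$. Everything else follows by counting the finitely many intervals meeting $\mathrm{supp}\,g$.
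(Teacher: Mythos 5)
Your argument is correct and complete: the support condition reduces the $\ell^q$ sum (or supremum) to the $|S(K)|$ intervals meeting $K$, the lattice property of the Luxemburg norm gives $\left\Vert g\chi_{J_k}\right\Vert_{L_w^{p(.)}(\mathbb{R})}\leq\left\Vert g\right\Vert_{L_w^{p(.)}(\mathbb{R})}$, and counting terms yields (i), (ii), and hence (iii). The paper itself states this proposition without proof, citing \cite{Ady}, and your bookkeeping argument is exactly the standard proof used there, so there is nothing to add.
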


\begin{theorem}
\label{theorem5}$L_{c,w}^{p(.)}\left( 
\mathbb{R}
\right) $ is dense subspace of $\left( L_{w}^{p(.)},\ell ^{q}\right) $ for $%
1\leq p(.),q<\infty .$
\end{theorem}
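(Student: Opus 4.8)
The plan is to approximate an arbitrary element of $\left(L_{w}^{p(.)},\ell^{q}\right)$ by its truncations to finite unions of the blocks $J_{k}$. First I would fix $f\in\left(L_{w}^{p(.)},\ell^{q}\right)$ and, for each positive integer $N$, set $E_{N}=\bigcup_{|k|\le N}J_{k}$ and $f_{N}=f\chi_{E_{N}}$. Since $E_{N}$ is contained in the compact interval $[-N,N+1]$ and, by hypothesis, $f\in L_{loc,w}^{p(.)}\left(\mathbb{R}\right)$, the restriction $f\chi_{E_{N}}$ lies in $L_{w}^{p(.)}(\mathbb{R})$ (each $J_{k}$ has compact closure, and the space is solid); as $\mathrm{supp}\,f_{N}\subset E_{N}$ is compact, we obtain $f_{N}\in L_{c,w}^{p(.)}\left(\mathbb{R}\right)$, and by Proposition~\ref{proposition4}(iii) also $f_{N}\in\left(L_{w}^{p(.)},\ell^{q}\right)$. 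So the $f_{N}$ are legitimate elements of the space we are approximating in.

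Next I would decompose the amalgam norm of the error block by block. The intervals $J_{k}$ are pairwise disjoint and cover $\mathbb{R}$, so for $|k|\le N$ we have $J_{k}\subset E_{N}$ and hence $(f-f_{N})\chi_{J_{k}}=0$, while for $|k|>N$ we have $J_{k}\cap E_{N}=\varnothing$ and hence $(f-f_{N})\chi_{J_{k}}=f\chi_{J_{k}}$. Consequently
\[
\left\Vert f-f_{N}\right\Vert _{\left(L_{w}^{p(.)},\ell^{q}\right)}^{q}=\sum_{|k|>N}\left\Vert f\chi_{J_{k}}\right\Vert _{L_{w}^{p(.)}(\mathbb{R})}^{q},
\]
which is exactly the tail of the convergent nonnegative series $\sum_{k\in\mathbb{Z}}\left\Vert f\chi_{J_{k}}\right\Vert _{L_{w}^{p(.)}(\mathbb{R})}^{q}=\left\Vert f\right\Vert _{\left(L_{w}^{p(.)},\ell^{q}\right)}^{q}<\infty$. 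Therefore this quantity tends to $0$ as $N\to\infty$, and given $\varepsilon>0$ one chooses $N$ large enough that $\left\Vert f-f_{N}\right\Vert _{\left(L_{w}^{p(.)},\ell^{q}\right)}<\varepsilon$, proving density.

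There is no serious obstacle here; the only points requiring a little care are the verification that $f_{N}$ genuinely belongs to $L_{c,w}^{p(.)}\left(\mathbb{R}\right)$, which rests on the definition of $L_{loc,w}^{p(.)}\left(\mathbb{R}\right)$ together with the compactness of $\overline{J_{k}}$, and the elementary observation that splitting the amalgam norm over the blocks $J_{k}$ reduces the approximation to the vanishing of the tail of a convergent series. The assumption $q<\infty$ is used precisely at this last step (for $q=\infty$ the truncation argument breaks down), which matches the stated range $1\le p(.),q<\infty$.
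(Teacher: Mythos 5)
Your proof is correct, and it is essentially the argument the paper has in mind: the paper does not write out a proof but merely points to the standard truncation technique of Holland (Theorem 7) and Squire (Theorem 3.6) for amalgam spaces, which is exactly what you carry out — truncate $f$ to the finite union $E_{N}$ of blocks $J_{k}$, note $f\chi_{E_{N}}\in L_{c,w}^{p(.)}\left(\mathbb{R}\right)$ via the definition of $L_{loc,w}^{p(.)}$, and observe that the error is the tail of the convergent series defining $\left\Vert f\right\Vert _{\left(L_{w}^{p(.)},\ell ^{q}\right)}^{q}$, which requires $q<\infty$. Your version simply supplies the details the paper leaves to the references.
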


\begin{proof}
If we use the similar techniques of Theorem 7 in \cite{Ho} or Theorem 3.6 in 
\cite{Sq}, then we can prove the theorem similarly.
\end{proof}

\begin{proposition}
\label{proposition5}$C_{c}\left( 
\mathbb{R}
\right) $, which consists of continuous functions on $%
\mathbb{R}
$ whose support is compact$,$ is dense in $\left( L_{w}^{p(.)},\ell
^{q}\right) $ for $1\leq p(.),q<\infty .$
\end{proposition}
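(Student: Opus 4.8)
The plan is to combine Theorem \ref{theorem5} with the density of $C_c(\mathbb{R})$ inside $L_{c,w}^{p(.)}(\mathbb{R})$ with respect to the amalgam norm, and then invoke transitivity of density. Concretely, given $f\in(L_w^{p(.)},\ell^q)$ and $\varepsilon>0$, Theorem \ref{theorem5} supplies a $g\in L_{c,w}^{p(.)}(\mathbb{R})$ with $\|f-g\|_{(L_w^{p(.)},\ell^q)}<\varepsilon/2$. It then suffices to approximate this compactly supported $g$ by a continuous compactly supported function in the amalgam norm, and here I would use Proposition \ref{proposition4}(i): if $\psi\in L_{c,w}^{p(.)}(\mathbb{R})$ has support contained in a fixed compact set $K'$, then $\|\psi\|_{(L_w^{p(.)},\ell^q)}\le |S(K')|^{1/q}\|\psi\|_{L_w^{p(.)}(\mathbb{R})}$, so control of the (single, scalar) $L_w^{p(.)}$-norm on a fixed compact set transfers directly to control of the amalgam norm.

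So the second step is: approximate $g\in L_{c,w}^{p(.)}(\mathbb{R})$, with $\mathrm{supp}\,g\subset K$, by some $h\in C_c(\mathbb{R})$ in the $L_w^{p(.)}(\mathbb{R})$-norm, while keeping supports inside a fixed slightly enlarged compact set. For this I would appeal to Corollary \ref{corollary1}, which gives density of $C_0^\infty(\mathbb{R})$ in $L_w^{p(.)}(\mathbb{R})$ (using $p(.)\in\mathcal{P}(\mathbb{R})$, $w\in A_{p(.)}$ as assumed throughout Section 3); alternatively one can use Proposition \ref{proposition3} directly, since the mollified functions $\varphi_\varepsilon * g$ are smooth with support in $K+B(0,\varepsilon)$ and converge to $g$ in $L_w^{p(.)}(\mathbb{R})$. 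Taking $\varepsilon\le 1$, all these mollifications have support in the fixed compact set $K_1:=K+\overline{B(0,1)}$, so with $N:=|S(K_1)|$ we get $\|g-\varphi_\varepsilon * g\|_{(L_w^{p(.)},\ell^q)}\le N^{1/q}\|g-\varphi_\varepsilon * g\|_{L_w^{p(.)}(\mathbb{R})}\to 0$. Choosing $\varepsilon$ small enough that the right-hand side is below $\varepsilon_0/2$ (renaming the tolerance to avoid clash), and setting $h:=\varphi_\varepsilon * g\in C_c(\mathbb{R})\subset C_0^\infty(\mathbb{R})$, the triangle inequality yields $\|f-h\|_{(L_w^{p(.)},\ell^q)}<\varepsilon_0$.

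The main obstacle — really the only subtlety — is making sure the supports stay inside one fixed compact set so that the support-cardinality factor $|S(K_1)|^{1/q}$ is a genuine constant independent of the approximation parameter; without that uniformity, Proposition \ref{proposition4}(i) would give a blowing-up constant and the argument would fail. Since mollification enlarges the support by at most $\varepsilon$, restricting to $\varepsilon\le 1$ handles this cleanly. Everything else is a two-step triangle-inequality chase. I would write the proof in roughly these three sentences: invoke Theorem \ref{theorem5} for the first approximation; mollify the resulting compactly supported function, controlling the amalgam norm via Proposition \ref{proposition4}(i) and the convergence via Proposition \ref{proposition3} (or Corollary \ref{corollary1}) with supports confined to $K_1$; and combine by the triangle inequality. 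Note also that $C_c(\mathbb{R})\subset(L_w^{p(.)},\ell^q)$ follows already from Proposition \ref{proposition4}(iii) since continuous compactly supported functions lie in $L_{c,w}^{p(.)}(\mathbb{R})$, so the statement "$C_c(\mathbb{R})$ is dense" is meaningful.
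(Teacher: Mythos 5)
Your argument is correct, and its skeleton is the same as the paper's: approximate $f$ by some $g\in L_{c,w}^{p(.)}\left( \mathbb{R}\right) $ via Theorem \ref{theorem5}, approximate $g$ in the $L_{w}^{p(.)}$-norm by a continuous compactly supported function while keeping the support inside one fixed compact set, transfer that estimate to the amalgam norm by Proposition \ref{proposition4}(i), and conclude by the triangle inequality. The only real difference lies in how the inner approximation is produced. The paper simply invokes the density of $C_{c}(E)$ in $L_{w}^{p(.)}(E)$ (citing \cite{Ay}), where $E$ is the support of $g$; this keeps the approximant supported in $E$ automatically, so no support-enlargement bookkeeping is needed, and it requires nothing beyond that density result. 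You instead mollify, using Proposition \ref{proposition3} (equivalently Corollary \ref{corollary1}) together with the observation that for mollification parameter at most $1$ all approximants are supported in the fixed compact set $K_{1}=K+\overline{B(0,1)}$, so the factor $\left\vert S(K_{1})\right\vert ^{1/q}$ from Proposition \ref{proposition4}(i) stays uniform — you correctly identify this uniformity as the one delicate point. Your route leans on the standing hypotheses of Section 3 ($w\in A_{p(.)}$ and the availability of the maximal/mollifier machinery), whereas the paper's citation is more lightweight; in exchange, your approximants are in $C_{0}^{\infty }\left( \mathbb{R}\right) $, so your argument proves the stronger statement and yields the paper's subsequent corollary (density of $C_{0}^{\infty }$ in $\left( L_{w}^{p(.)},\ell ^{q}\right) $) at no extra cost. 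Your closing remark that $C_{c}\left( \mathbb{R}\right) \subset \left( L_{w}^{p(.)},\ell ^{q}\right) $ via Proposition \ref{proposition4}(iii) matches the paper's (unjustified) opening claim of the inclusion.
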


\begin{proof}
It is clear that $C_{c}\left( 
\mathbb{R}
\right) $ is included in $\left( L_{w}^{p(.)},\ell ^{q}\right) $. Let $f\in
\left( L_{w}^{p(.)},\ell ^{q}\right) $. By Theorem \ref{theorem5}, given $%
\varepsilon >0$ there exists $g\in L_{c,w}^{p(.)}\left( 
\mathbb{R}
\right) $ such that 
\begin{equation}
\left\Vert f-g\right\Vert _{\left( L_{w}^{p(.)},\ell ^{q}\right) }<\frac{%
\varepsilon }{2}.  \label{3.1}
\end{equation}

If $E$ is the compact support of $g$, then there exists $h$ in $C_{c}\left(
E\right) $ such that%
\begin{equation*}
\left\Vert g-h\right\Vert _{L_{w}^{p(.)}(E)}<\frac{\varepsilon }{2\left\vert
S(E)\right\vert ^{\frac{1}{q}}}
\end{equation*}%
since $C_{c}\left( E\right) $ is dense in $L_{w}^{p(.)}(E),$ see \cite{Ay}$.$
Hence by Proposition \ref{proposition4}, we have%
\begin{equation}
\left\Vert g-h\right\Vert _{\left( L_{w}^{p(.)},\ell ^{q}\right) }\leq
\left\vert S(E)\right\vert ^{\frac{1}{q}}\left\Vert g-h\right\Vert
_{L_{w}^{p(.)}(E)}<\frac{\varepsilon }{2}.  \label{3.3}
\end{equation}%
If we consider the (\ref{3.1}) and (\ref{3.3}), then 
\begin{eqnarray*}
\left\Vert f-h\right\Vert _{\left( L_{w}^{p(.)},\ell ^{q}\right) } &\leq
&\left\Vert f-g\right\Vert _{\left( L_{w}^{p(.)},\ell ^{q}\right)
}+\left\Vert g-h\right\Vert _{\left( L_{w}^{p(.)},\ell ^{q}\right) } \\
&<&\frac{\varepsilon }{2}+\frac{\varepsilon }{2}=\varepsilon .
\end{eqnarray*}%
This completes the proof.
\end{proof}

By the Corollary \ref{corollary1}, the proof of the following result can be
obtained similar to Proposition \ref{proposition5}.

\begin{corollary}
The class $C_{0}^{\infty }\left( 
\mathbb{R}
\right) $ is dense in $\left( L_{w}^{p(.)},\ell ^{q}\right) $ for $1\leq
p(.),q<\infty .$
\end{corollary}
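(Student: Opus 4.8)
The plan is to mimic the proof of Proposition \ref{proposition5}, replacing the density of $C_c(E)$ in $L_w^{p(.)}(E)$ by the density of $C_0^\infty$ established in Corollary \ref{corollary1}. First I would fix $f\in\left(L_w^{p(.)},\ell^q\right)$ and $\varepsilon>0$. By Theorem \ref{theorem5}, $L_{c,w}^{p(.)}\left(\mathbb{R}\right)$ is dense in $\left(L_w^{p(.)},\ell^q\right)$, so there is $g\in L_{c,w}^{p(.)}\left(\mathbb{R}\right)$ with
\begin{equation*}
\left\Vert f-g\right\Vert _{\left( L_{w}^{p(.)},\ell ^{q}\right) }<\frac{\varepsilon }{2}.
\end{equation*}
Let $E=\operatorname{supp}g$, a compact set. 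By Corollary \ref{corollary1}, $C_0^\infty\left(\mathbb{R}\right)$ is dense in $L_w^{p(.)}\left(\mathbb{R}\right)$ (this uses $p(.)\in\mathcal P\left(\mathbb{R}\right)$ and $w\in A_{p(.)}$, which we have assumed throughout).

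The small subtlety is that Corollary \ref{corollary1} approximates $g$ by a function $h_0\in C_0^\infty$ close in the $L_w^{p(.)}\left(\mathbb{R}\right)$-norm but with \emph{no control on the support} of $h_0$. So the second step is to fix the support. I would take a slightly larger compact set, say $E'=\{x:\operatorname{dist}(x,E)\le 1\}$, and a fixed cutoff $\psi\in C_0^\infty$ with $\psi\equiv 1$ on $E$ and $\operatorname{supp}\psi\subset E'$. Then $h=\psi h_0\in C_c\left(\mathbb{R}\right)$ (indeed $C_0^\infty$), $\operatorname{supp}h\subset E'$, and since $g=\psi g$,
\begin{equation*}
\left\Vert g-h\right\Vert _{L_w^{p(.)}(E')}=\left\Vert \psi(g-h_0)\right\Vert _{L_w^{p(.)}(E')}\le \left\Vert \psi\right\Vert _\infty\left\Vert g-h_0\right\Vert _{L_w^{p(.)}\left(\mathbb{R}\right)},
\end{equation*}
using that $L_w^{p(.)}$ is solid, so multiplication by the bounded function $\psi$ does not increase the norm by more than the factor $\left\Vert\psi\right\Vert_\infty$. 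Choosing $h_0$ so that $\left\Vert g-h_0\right\Vert _{L_w^{p(.)}\left(\mathbb{R}\right)}<\varepsilon/\bigl(2\left\Vert\psi\right\Vert_\infty|S(E')|^{1/q}\bigr)$ gives $\left\Vert g-h\right\Vert _{L_w^{p(.)}(E')}<\varepsilon/\bigl(2|S(E')|^{1/q}\bigr)$.

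Finally I would invoke Proposition \ref{proposition4}(i): since $h\in L_{c,w}^{p(.)}\left(\mathbb{R}\right)$ with compact support $E'$,
\begin{equation*}
\left\Vert g-h\right\Vert _{\left( L_{w}^{p(.)},\ell ^{q}\right) }\le |S(E')|^{\frac1q}\left\Vert g-h\right\Vert _{L_w^{p(.)}(E')}<\frac\varepsilon2,
\end{equation*}
and the triangle inequality yields $\left\Vert f-h\right\Vert _{\left( L_{w}^{p(.)},\ell ^{q}\right) }<\varepsilon$ with $h\in C_0^\infty\left(\mathbb{R}\right)\subset C_c\left(\mathbb{R}\right)$, completing the proof. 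The only genuine obstacle is the support-control issue above; everything else is a verbatim transcription of the argument for Proposition \ref{proposition5}, which is presumably why the authors merely say "the proof can be obtained similar to Proposition \ref{proposition5}." If one prefers an even shorter route, one can first apply Proposition \ref{proposition5} to get $h_1\in C_c\left(\mathbb{R}\right)$ with $\left\Vert f-h_1\right\Vert<\varepsilon/2$, then mollify $h_1$ (Proposition \ref{proposition3}) to land in $C_0^\infty$, noting that the mollifications $\varphi_\delta\ast h_1$ have supports contained in a fixed compact neighbourhood of $\operatorname{supp}h_1$, so Proposition \ref{proposition4}(i) again converts $L_w^{p(.)}$-convergence into $\left(L_w^{p(.)},\ell^q\right)$-convergence.
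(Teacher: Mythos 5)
Your proposal follows exactly the route the paper intends: approximate $f$ by $g\in L_{c,w}^{p(.)}\left( \mathbb{R}\right) $ via Theorem \ref{theorem5}, approximate $g$ by a smooth function using Corollary \ref{corollary1}, and convert the $L_{w}^{p(.)}$-estimate into an $\left( L_{w}^{p(.)},\ell ^{q}\right) $-estimate through Proposition \ref{proposition4}, which is precisely the ``similar to Proposition \ref{proposition5}'' argument the authors invoke. Your cutoff (or mollification) step correctly handles the support-control point that the paper leaves implicit, so the proof is sound and essentially the same as the paper's.
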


Now we give the proof of the Kolmogorov-Riesz Theorem for $\tciFourier
\subset \left( L_{w}^{p(.)},\ell ^{q}\right) $.

\begin{theorem}
\label{theorem6}A subset $\tciFourier \subset \left( L_{w}^{p(.)},\ell
^{q}\right) $ is totally bounded if and only if

\begin{enumerate}
\item[\textit{(i)}] $\tciFourier $ is bounded in $\left( L_{w}^{p(.)},\ell
^{q}\right) $, i.e. $\underset{f\in \tciFourier }{\sup }\left\Vert
f\right\Vert _{\left( L_{w}^{p(.)},\ell ^{q}\right) }<\infty $

\item[\textit{(ii)}] For every $\varepsilon >0$ there is some $\gamma >0$
such that for all $f\in \tciFourier $ 
\begin{equation*}
\left\Vert f\right\Vert _{\left( L_{w}^{p(.)},\ell ^{q}\right) \left(
\left\vert x\right\vert >\gamma \right) }<\varepsilon
\end{equation*}

\item[\textit{(iii)}] $\underset{\varepsilon \longrightarrow 0^{+}}{\lim }%
\left\Vert f\ast \varphi _{\varepsilon }-f\right\Vert _{\left(
L_{w}^{p(.)},\ell ^{q}\right) }=0$ uniformly for $f\in \tciFourier $, where $%
\varphi _{\varepsilon }$ is a mollifier function.
\end{enumerate}
\end{theorem}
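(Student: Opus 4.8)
The plan is to prove the two implications separately, modelling the argument on the proof of Theorem \ref{theorem3} for $L_{w}^{p(.)}(\mathbb{R}^{n})$ and exploiting the decomposition of the amalgam norm as an $\ell^{q}$-sum of local $L_{w}^{p(.)}$-pieces.

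First I would prove necessity. Suppose $\tciFourier$ is totally bounded. Condition (i) is immediate, since a totally bounded set in any normed space is bounded. For (ii), cover $\tciFourier$ by finitely many balls of radius $\varepsilon/2$ centred at $f_{1},\dots,f_{m}\in\left(L_{w}^{p(.)},\ell^{q}\right)$; by the definition of the amalgam norm each tail $\bigl(\sum_{|k|>N}\|f_{j}\chi_{J_{k}}\|_{L_{w}^{p(.)}}^{q}\bigr)^{1/q}$ tends to $0$ as $N\to\infty$, so choose $\gamma$ large enough (with $(-\gamma,\gamma)$ containing $\bigcup_{|k|\le N}J_{k}$) that this tail is $<\varepsilon/2$ for every $j$; then for arbitrary $f\in\tciFourier$ pick the nearest $f_{j}$ and apply the triangle inequality for the restricted norm $\|\cdot\|_{\left(L_{w}^{p(.)},\ell^{q}\right)(|x|>\gamma)}$. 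For (iii), again use the finite net: pick $f_{j}$ with $\|f-f_{j}\|_{\left(L_{w}^{p(.)},\ell^{q}\right)}<\varepsilon/3$; since $\varphi_{\varepsilon}\ast(\cdot)$ is a contraction (or at least uniformly bounded — this follows from the assumed boundedness of $M$ on $\left(L_{w}^{p(.)},\ell^{q}\right)$ together with Proposition \ref{proposition2} and solidity), estimate $\|f\ast\varphi_{\delta}-f\|\le\|(f-f_{j})\ast\varphi_{\delta}\|+\|f_{j}\ast\varphi_{\delta}-f_{j}\|+\|f_{j}-f\|$ and use that each of the finitely many $f_{j}$ satisfies $\|f_{j}\ast\varphi_{\delta}-f_{j}\|\to 0$ (which holds for a single function; this is the amalgam analogue of Proposition \ref{proposition3}, provable by approximating $f_j$ by $C_c$ functions via Proposition \ref{proposition5} and the mollifier boundedness).

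For sufficiency, I would invoke Theorem 4 (the Hanche-Olsen--Holden criterion, \cite{Hoh}), which reduces total boundedness of $\tciFourier$ to producing, for each $\varepsilon>0$, a $\delta>0$ and a map $\Phi$ into a totally bounded set that is ``$\delta$-to-$\varepsilon$ injective''. Given $\varepsilon>0$, use (iii) to choose $\varepsilon_{0}>0$ with $\|f\ast\varphi_{\varepsilon_{0}}-f\|_{\left(L_{w}^{p(.)},\ell^{q}\right)}<\varepsilon/4$ for all $f\in\tciFourier$, and use (ii) to choose $\gamma$ with $\|f\|_{\left(L_{w}^{p(.)},\ell^{q}\right)(|x|>\gamma)}<\varepsilon/4$ for all $f\in\tciFourier$. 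Define $\Phi f=(f\ast\varphi_{\varepsilon_{0}})\chi_{(-\gamma,\gamma)}$. The key points are: (a) $\Phi[\tciFourier]$ is a bounded, equicontinuous (via $\sup_{x}|\nabla(f\ast\varphi_{\varepsilon_{0}})(x)|\le\|f\|\,\|\nabla\varphi_{\varepsilon_{0}}\|_{(L_{w^{\ast}}^{r(.)},\ell^{s})}$, using the Hölder-type Remark) family of continuous functions supported in the fixed compact $[-\gamma,\gamma]$, hence by Arzelà--Ascoli totally bounded in $C([-\gamma,\gamma])$, and the sup-norm there dominates the $\left(L_{w}^{p(.)},\ell^{q}\right)$-norm on that compact set up to the constant $|S([-\gamma,\gamma])|^{1/q}\|\chi_{[-\gamma,\gamma]}\|_{L_{w}^{p(.)}}$ coming from Proposition \ref{proposition4} and Remark 2; (b) if $d(\Phi f,\Phi g)<\delta$ in $\left(L_{w}^{p(.)},\ell^{q}\right)$, then writing $\|f-g\|\le\|f-f\ast\varphi_{\varepsilon_{0}}\|+\|(f-g)\ast\varphi_{\varepsilon_{0}}\chi_{|x|>\gamma}\|+\|\Phi f-\Phi g\|+\|g\ast\varphi_{\varepsilon_{0}}-g\|$ and bounding the second term using $\|h\ast\varphi_{\varepsilon_{0}}\chi_{|x|>\gamma}\|\le\|h\chi_{|x|>\gamma-\varepsilon_{0}}\ast\varphi_{\varepsilon_{0}}\|$ (support considerations) $\lesssim\|h\chi_{|x|>\gamma-\varepsilon_{0}}\|$ together with (ii), one gets $\|f-g\|<\varepsilon$ for $\delta$ small. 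Hence Theorem 4 applies and $\tciFourier$ is totally bounded.

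I expect the main obstacle to be part (a) of the sufficiency argument — specifically, establishing equicontinuity and the correct covering estimate for $\Phi[\tciFourier]$ in the amalgam norm. Unlike the pure $L_{w}^{p(.)}$ case, one must pass from uniform (sup-norm) control on the fixed compact $[-\gamma,\gamma]$ to the $\ell^{q}$-aggregated amalgam norm, which requires the finite-cardinality bound $|S([-\gamma,\gamma])|<\infty$ from Proposition \ref{proposition4}(i) and the finiteness $\|\chi_{[-\gamma,\gamma]}\|_{L_{w}^{p(.)}}<\infty$ from Remark 1; both hinge on local integrability of $w$. The differentiability/equicontinuity estimate $|(f\ast\varphi_{\varepsilon_{0}})(x)-(f\ast\varphi_{\varepsilon_{0}})(y)|\le|x-y|\,\sup_{z}|\nabla\varphi_{\varepsilon_{0}}(z-\cdot)\text{-pairing}|$ must be justified via the amalgam Hölder inequality (Remark 2), noting $\nabla\varphi_{\varepsilon_{0}}\in C_{c}^{\infty}\subset(L_{w^{\ast}}^{r(.)},\ell^{s})$. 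Once these localization lemmas are in place, the rest is a careful but routine triangle-inequality bookkeeping exactly parallel to the $L_{w}^{p(.)}$ proof.
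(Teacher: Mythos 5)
Your proposal is correct and follows the same skeleton as the paper's argument: necessity from a finite $\varepsilon$-net together with the density of $L_{c,w}^{p(.)}\left( \mathbb{R}\right) $ (Theorem \ref{theorem5}), Proposition \ref{proposition3}, Proposition \ref{proposition4} and the boundedness of the maximal operator; sufficiency by mollification plus a compactness-transfer lemma. The differences are concentrated in the sufficiency direction, which the paper does not write out: after introducing the singly and doubly mollified families $\tciFourier _{\varepsilon }$ and $\tciFourier _{\varepsilon \varepsilon }$, dominated pointwise by $Mf$ and $M(Mf)$, it refers to the techniques of \cite[Theorem 11]{Raf}; there the double mollification is what produces equicontinuity on compact sets from the uniform bound on $\tciFourier _{\varepsilon }$, after which the mapping criterion of Hanche-Olsen and Holden \cite{Hoh} and Arzel\`{a}--Ascoli finish the proof. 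You instead obtain equicontinuity of the singly mollified family directly from the amalgam H\"{o}lder inequality applied to $f\ast \nabla \varphi _{\varepsilon _{0}}$ and then run the same mapping criterion with $\Phi f=\left( f\ast \varphi _{\varepsilon _{0}}\right) \chi _{(-\gamma ,\gamma )}$; this is a legitimate and self-contained variant, with the one caveat that in this non-translation-invariant setting the relevant quantity is $\left\Vert \nabla \varphi _{\varepsilon _{0}}(x-\cdot )\right\Vert _{\left( L_{w^{\ast }}^{r(.)},\ell ^{s}\right) }$, which depends on $x$ and is uniformly finite only because $x$ ranges over the fixed compact $[-\gamma ,\gamma ]$ and $w^{\ast }=w^{1-r(.)}$ is locally integrable under $w\in A_{p(.)}$; this should be stated explicitly rather than quoting a translation-independent norm of $\nabla \varphi _{\varepsilon _{0}}$. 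Your handling of condition \textit{(iii)} in the necessity direction, which first passes to the finitely many net elements and only then invokes density and the uniform bound $\left\vert f\ast \varphi _{\varepsilon }\right\vert \leq Mf$ (Proposition \ref{proposition2}), is in fact tighter on the uniformity in $f$ than the paper's version, where the approximant $g$ is chosen depending on $f$; the remaining steps of your part (b), including the support observation that bounds $\left( (f-g)\ast \varphi _{\varepsilon _{0}}\right) \chi _{\left\vert x\right\vert >\gamma }$ by the tail of $f-g$ at radius $\gamma -\varepsilon _{0}$, are routine and correct.
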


\begin{proof}
Assume that $\tciFourier $ is totally bounded in $\left( L_{w}^{p(.)},\ell
^{q}\right) $. Then, for every $\varepsilon >0$ there exists a finite $%
\varepsilon $-cover for the set $\tciFourier $. This implies plainly the
boundedness of $\tciFourier $, and then we get \textit{(i)}. To prove
condition \textit{(ii)}, let $\varepsilon >0$ be given. If we take the set $%
\left\{ V_{1},V_{2},..,V_{m}\right\} $ as an $\varepsilon $-cover of $%
\tciFourier $, and $h_{j}\in V_{j}$ for $j=1,...,m$, then for a $\gamma >0$
we have 
\begin{equation*}
\left\Vert h_{j}\right\Vert _{\left( L_{w}^{p(.)},\ell ^{q}\right) \left(
\left\vert x\right\vert >\gamma \right) }<\varepsilon .
\end{equation*}%
If $f\in V_{j}$, then we have $\left\Vert f-h_{j}\right\Vert _{\left(
L_{w}^{p(.)},\ell ^{q}\right) }\leq \varepsilon .$ This follows that%
\begin{eqnarray*}
\left\Vert f\right\Vert _{\left( L_{w}^{p(.)},\ell ^{q}\right) \left(
\left\vert x\right\vert >\gamma \right) } &\leq &\left\Vert
f-h_{j}\right\Vert _{\left( L_{w}^{p(.)},\ell ^{q}\right) \left( \left\vert
x\right\vert >\gamma \right) }+\left\Vert h_{j}\right\Vert _{\left(
L_{w}^{p(.)},\ell ^{q}\right) \left( \left\vert x\right\vert >\gamma \right)
} \\
&\leq &\left\Vert f-h_{j}\right\Vert _{\left( L_{w}^{p(.)},\ell ^{q}\right)
}+\left\Vert h_{j}\right\Vert _{\left( L_{w}^{p(.)},\ell ^{q}\right) \left(
\left\vert x\right\vert >\gamma \right) }<2\varepsilon .
\end{eqnarray*}%
This implies the condition \textit{(ii)}. Finally, we show the condition 
\textit{(iii)}. Let $f\in \tciFourier $ be given. Then by Theorem \ref%
{theorem5}, given $\varepsilon >0$ there exists $g\in L_{c,w}^{p(.)}\left( 
\mathbb{R}
\right) $ such that%
\begin{equation}
\left\Vert f-g\right\Vert _{\left( L_{w}^{p(.)},\ell ^{q}\right) }<\frac{%
\varepsilon }{2\max \left\{ 1,c\right\} }.  \label{3.4}
\end{equation}%
If $E$ is the compact support of $g,$ then we have%
\begin{equation}
\left\Vert g-g\ast \varphi _{\varepsilon }\right\Vert _{L_{w}^{p(.)}(E)}<%
\frac{\varepsilon }{2\left\vert S(E)\right\vert ^{\frac{1}{q}}}  \label{3.5}
\end{equation}%
by Proposition \ref{proposition3}. Also by Proposition \ref{proposition4},
we get%
\begin{equation}
\left\Vert g-g\ast \varphi _{\varepsilon }\right\Vert _{\left(
L_{w}^{p(.)},\ell ^{q}\right) }\leq \left\vert S(E)\right\vert ^{\frac{1}{q}%
}\left\Vert g-g\ast \varphi _{\varepsilon }\right\Vert _{L_{w}^{p(.)}(E)}<%
\frac{\varepsilon }{2}.  \label{3.6}
\end{equation}%
If we consider the Proposition \ref{proposition2}, the boundedness of
maximal operator, (\ref{3.4}) and (\ref{3.6}), then we have 
\begin{eqnarray*}
\left\Vert f-f\ast \varphi _{\varepsilon }\right\Vert _{\left(
L_{w}^{p(.)},\ell ^{q}\right) } &\leq &\left\Vert f-g\right\Vert _{\left(
L_{w}^{p(.)},\ell ^{q}\right) }+\left\Vert g-g\ast \varphi _{\varepsilon
}\right\Vert _{\left( L_{w}^{p(.)},\ell ^{q}\right) }+\left\Vert g\ast
\varphi _{\varepsilon }-f\ast \varphi _{\varepsilon }\right\Vert _{\left(
L_{w}^{p(.)},\ell ^{q}\right) } \\
&\leq &\max \left\{ 1,c\right\} \left\Vert f-g\right\Vert _{\left(
L_{w}^{p(.)},\ell ^{q}\right) }+\left\Vert g-g\ast \varphi _{\varepsilon
}\right\Vert _{\left( L_{w}^{p(.)},\ell ^{q}\right) } \\
&<&\frac{\varepsilon }{2}+\frac{\varepsilon }{2}=\varepsilon .
\end{eqnarray*}%
This finishes the proof of necessity. Now, we assume that $\tciFourier
\subset \left( L_{w}^{p(.)},\ell ^{q}\right) $ satisfies all three
conditions. Let $\varepsilon >0$ be given. Denote%
\begin{equation*}
\tciFourier _{\varepsilon }=\left\{ f_{\varepsilon }:f\in \tciFourier \text{%
, }f_{\varepsilon }=f\ast \varphi _{\varepsilon }\right\} .
\end{equation*}%
By Proposition \ref{proposition2}, we have%
\begin{equation*}
\left\vert f_{\varepsilon }(x)\right\vert \leq Mf(x),
\end{equation*}%
where $Mf$ is the maximal function. Since the condition \textit{(i)} hold,
we have uniformly boundedness of all functions in $\tciFourier _{\varepsilon
}$. Now, we denote%
\begin{equation*}
\tciFourier _{\varepsilon \varepsilon }=\left\{ f_{\varepsilon \varepsilon
}:f_{\varepsilon }\in \tciFourier _{\varepsilon }\text{, }f_{\varepsilon
\varepsilon }=f_{\varepsilon }\ast \varphi _{\varepsilon }\right\} .
\end{equation*}

If we consider the Proposition \ref{proposition2} and the monotonicity of
the maximal operator, then we get%
\begin{eqnarray*}
\left\vert f_{\varepsilon \varepsilon }\left( x\right) \right\vert &\leq
&\left\vert \left( f_{\varepsilon }\ast \varphi _{\varepsilon }\right)
\left( x\right) \right\vert \leq M\left( f_{\varepsilon }\right) \left(
x\right) \\
&\leq &M\left( Mf\right) \left( x\right) .
\end{eqnarray*}%
This yields%
\begin{equation*}
\underset{f_{\varepsilon }\in \tciFourier _{\varepsilon }}{\sup }\left\Vert
f_{\varepsilon \varepsilon }\right\Vert _{\left( L_{w}^{p(.)},\ell
^{q}\right) }<\infty .
\end{equation*}%
Therefore, we get that all functions in $\tciFourier _{\varepsilon
\varepsilon }$ are uniformly bounded. If we use the techniques of Theorem 11
in \cite{Raf} for the rest of proof, then we can prove the theorem similarly.
\end{proof}

The following theorem has been given us a different characterization of
precompactness in $\left( L_{w}^{p(.)},\ell ^{q}\right) $ similar to Theorem
3 and Theorem 4 in \cite{Gom}.

\begin{theorem}
The family $\tciFourier \subset \left( L_{w}^{p(.)},\ell ^{q}\right) $ is
totally bounded in $\left( L_{w}^{p(.)},\ell ^{q}\right) $ if and only if

\begin{enumerate}
\item[\textit{(i)}] $\tciFourier $ is bounded in $\left( L_{w}^{p(.)},\ell
^{q}\right) $, i.e. $\underset{f\in \tciFourier }{\sup }\left\Vert
f\right\Vert _{\left( L_{w}^{p(.)},\ell ^{q}\right) }<\infty $

\item[\textit{(ii)}] For every $\varepsilon >0$, $r\longrightarrow 0^{+}$
and for all $f\in \tciFourier $ we have%
\begin{equation*}
\left\Vert f-\left( f\right) _{B\left( .,r\right) }\right\Vert _{\left(
L_{w}^{p(.)},\ell ^{q}\right) }<\varepsilon \text{ }
\end{equation*}%
or equivalently%
\begin{equation*}
\lim_{r\longrightarrow 0^{+}}\left\Vert f-\left( f\right) _{B\left(
.,r\right) }\right\Vert _{\left( L_{w}^{p(.)},\ell ^{q}\right) }=0
\end{equation*}%
where $\left( f\right) _{B\left( x,r\right) }=\frac{1}{\left\vert B\left(
x,r\right) \right\vert }\int_{B\left( x,r\right) }f(t)dt$.

\item[\textit{(iii)}] For every $\varepsilon >0$ there is a $\gamma >0$ such
that for all $f\in \tciFourier $%
\begin{equation*}
\left\Vert f\right\Vert _{\left( L_{w}^{p(.)},\ell ^{q}\right) \left(
\left\vert x\right\vert >\gamma \right) }<\varepsilon .
\end{equation*}
\end{enumerate}
\end{theorem}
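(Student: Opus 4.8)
The plan is to mirror the structure of the proof of Theorem \ref{theorem6}, using the already-established Kolmogorov--Riesz characterization in $\left( L_{w}^{p(.)},\ell ^{q}\right) $ together with the maximal-operator bound that is assumed throughout, and to relate the mollifier condition \textit{(iii)} of Theorem \ref{theorem6} to the averaging condition \textit{(ii)} stated here. Since conditions \textit{(i)} of both theorems and conditions \textit{(ii)} of Theorem \ref{theorem6} and \textit{(iii)} here coincide verbatim, the whole content reduces to proving, under the standing hypotheses ($p(.)\in P^{\log }$, $1<p^{-}\le p^{+}<\infty $, $w\in A_{p(.)}$, $M$ bounded on $\left( L_{w}^{p(.)},\ell ^{q}\right) $), the equivalence of the mollifier condition and the ball-average condition \emph{uniformly over} $\tciFourier $, in the presence of \textit{(i)} and \textit{(iii)}/\textit{(ii)} tightness at infinity. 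I would structure the argument as: first prove the equivalence $\text{\textit{(iii)}}_{\text{Thm }\ref{theorem6}}\Longleftrightarrow \text{\textit{(ii)}}_{\text{here}}$ for a single fixed $f$, then upgrade to uniformity using tightness.

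First I would handle a single function. For $f\in L_{c,w}^{p(.)}(\mathbb{R})$ with compact support $E$, both $\varphi _{\varepsilon }\ast f\to f$ and $(f)_{B(\cdot ,r)}\to f$ in $L_{w}^{p(.)}(E')$ (for a slightly enlarged compact $E'$) as $\varepsilon ,r\to 0^{+}$, by Proposition \ref{proposition3} applied to the mollifier and by the corresponding averaging result (Theorem \ref{theorem4}, condition \textit{(iii)}, which is exactly the statement that $f_{h}\to f$ in $L_{w}^{p(.)}$); then Proposition \ref{proposition4}(i) transfers this to $\left( L_{w}^{p(.)},\ell ^{q}\right) $-convergence since the supports stay inside a fixed set of finitely many $J_{k}$'s. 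For general $f\in \left( L_{w}^{p(.)},\ell ^{q}\right) $, approximate by $g\in L_{c,w}^{p(.)}(\mathbb{R})$ via Theorem \ref{theorem5}, and control the error terms $\left\Vert \varphi _{\varepsilon }\ast (f-g)\right\Vert $ and $\left\Vert (f-g)_{B(\cdot ,r)}\right\Vert $ by $\left\Vert f-g\right\Vert $ using Proposition \ref{proposition2} (which gives $|\varphi _{\varepsilon }\ast h|\le Mh$ and, since each ball average is dominated by $Mh$ too, $|h_{B(\cdot ,r)}|\le Mh$) together with the boundedness of $M$ on $\left( L_{w}^{p(.)},\ell ^{q}\right) $. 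This pointwise domination by $Mh$ is the common mechanism: it shows both $\left\Vert \varphi _{\varepsilon }\ast h\right\Vert _{\left( L_{w}^{p(.)},\ell ^{q}\right) }\le c\left\Vert h\right\Vert _{\left( L_{w}^{p(.)},\ell ^{q}\right) }$ and $\left\Vert h_{B(\cdot ,r)}\right\Vert _{\left( L_{w}^{p(.)},\ell ^{q}\right) }\le c\left\Vert h\right\Vert _{\left( L_{w}^{p(.)},\ell ^{q}\right) }$ with the same constant $c$, so the two conditions are interchangeable after a three-$\varepsilon $ splitting.

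Next I would promote the single-function statement to a uniform-in-$\tciFourier $ statement, which is where condition \textit{(i)} (boundedness) and the tightness conditions enter. Given $\varepsilon >0$, use condition \textit{(ii)}/\textit{(iii)} (tightness at infinity) to pick $\gamma $ so that $\left\Vert f\right\Vert _{\left( L_{w}^{p(.)},\ell ^{q}\right) (|x|>\gamma )}<\varepsilon $ for all $f\in \tciFourier $; then on the compact region $\{|x|\le \gamma \}$ — which meets only finitely many $J_{k}$'s — the restricted family is a bounded subset of $L_{w}^{p(.)}$ (by \textit{(i)} and Proposition \ref{proposition4}), so one can invoke the uniform version of the mollifier/averaging convergence that is built into Theorem \ref{theorem3}\textit{(iii)} and Theorem \ref{theorem4}\textit{(iii)} on that finite block, then reassemble via the $\ell ^{q}$-sum over the finitely many indices. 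Finally, to close the proof of the theorem itself, I would note that conditions \textit{(i)}+\textit{(iii)}(here)+\textit{(ii)}(here) imply, by the equivalence just established, conditions \textit{(i)}+\textit{(ii)}(Thm \ref{theorem6})+\textit{(iii)}(Thm \ref{theorem6}), whence total boundedness follows from Theorem \ref{theorem6}; the converse direction is the same chain run backwards, together with the already-proved necessity part of Theorem \ref{theorem6}.

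The main obstacle I anticipate is the uniformity step: passing from "for each fixed $f$, $\left\Vert f-(f)_{B(\cdot ,r)}\right\Vert \to 0$" to a rate that works simultaneously for all $f\in \tciFourier $. This cannot be done by a naive compactness/diagonal argument without circularity (we are in the middle of characterizing compactness), so I would instead extract it from the \emph{quantitative} content already present: the proof of Theorem \ref{theorem4}, adapted as stated in the excerpt from Theorem 3 and Theorem 4 of \cite{Gom}, already delivers the uniform averaging estimate on compact sets for bounded families, and Theorem \ref{theorem3}\textit{(iii)} does the same for mollifiers; combining those on the finitely many relevant intervals $J_{k}$ and summing is routine. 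The only genuinely delicate point beyond bookkeeping is making sure the enlargement of the support region $E\subset E'$ (needed because convolution and averaging spread mass by $\varepsilon $ or $r$) still meets only finitely many $J_{k}$'s uniformly, which holds as soon as $\varepsilon ,r\le 1$, so I would simply restrict attention to that range from the outset.
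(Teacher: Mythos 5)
Your overall architecture (reduce to Theorem \ref{theorem6} by showing the mollifier condition and the ball--average condition are interchangeable, using $|\varphi _{\varepsilon }\ast h|\leq Mh$, $|(h)_{B(\cdot ,r)}|\leq Mh$ and the assumed boundedness of $M$ on the amalgam space) is reasonable, and your single-function estimates coincide with what the paper does in its necessity part. But the step you yourself flag as the crux --- upgrading per-function convergence to uniformity over $\mathcal{F}$ --- is carried by a false claim. You assert that ``the proof of Theorem \ref{theorem4} already delivers the uniform averaging estimate on compact sets for bounded families, and Theorem \ref{theorem3}\textit{(iii)} does the same for mollifiers.'' Conditions \textit{(iii)} of Theorems \ref{theorem3} and \ref{theorem4} are constituents of if-and-only-if characterizations of precompactness; they are not consequences of boundedness, even for families supported in a single compact block. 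For instance, $f_{n}(x)=\sin (nx)\chi _{[0,1]}(x)$ is bounded in $L_{w}^{p(.)}$, supported in finitely many $J_{k}$, yet for fixed $\varepsilon $ one has $\varphi _{\varepsilon }\ast f_{n}\rightarrow 0$ in $L_{w}^{p(.)}$ while $\Vert f_{n}\Vert _{L_{w}^{p(.)}}$ stays bounded below, so $\sup_{n}\Vert \varphi _{\varepsilon }\ast f_{n}-f_{n}\Vert $ does not tend to $0$; the same happens for ball averages. Consequently your sufficiency direction, as written, would prove that every bounded, tight family is totally bounded, which is false; and the three-$\varepsilon $ splitting through a compactly supported approximant $g$ cannot repair this, because $g$ and the rate of convergence depend on $f$ (making them independent of $f$ requires a finite net, i.e.\ the total boundedness you are trying to prove). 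A secondary slip: Proposition \ref{proposition4} bounds the amalgam norm by the $L_{w}^{p(.)}$ norm for compactly supported functions, not the reverse direction you invoke when restricting to the block $\{|x|\leq \gamma \}$ (though that restriction estimate is easy to get directly from the finite $\ell ^{q}$ sum).

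For comparison, the paper does not pass through the mollifier condition at all in the sufficiency direction: it observes that $(L_{w}^{p(.)},\ell ^{q})$ is a solid Banach function space and quotes \cite[Theorem 3.1]{Gr}, whose hypotheses are exactly \textit{(i)}--\textit{(iii)} of the present theorem; for necessity it uses density of $L_{c,w}^{p(.)}$, the Lebesgue differentiation theorem with dominated convergence (majorant built from $Mg$), Proposition \ref{proposition4}, and the boundedness of $M$ on the amalgam space. If you want to keep your reduction to Theorem \ref{theorem6}, the implication ``\textit{(ii)} here $\Rightarrow $ \textit{(iii)} of Theorem \ref{theorem6}, uniformly'' needs a genuinely different device: since $\varphi $ is radial and decreasing, $\varphi _{\varepsilon }$ is a mixture of normalized ball indicators, $\varphi _{\varepsilon }=\int_{0}^{\varepsilon }|B(0,s)|^{-1}\chi _{B(0,s)}\,d\mu _{\varepsilon }(s)$ with $\mu _{\varepsilon }$ a probability measure, so solidity and Minkowski's integral inequality give $\Vert \varphi _{\varepsilon }\ast f-f\Vert _{(L_{w}^{p(.)},\ell ^{q})}\leq \sup_{0<s\leq \varepsilon }\Vert (f)_{B(\cdot ,s)}-f\Vert _{(L_{w}^{p(.)},\ell ^{q})}$, which is uniform in $f$ by hypothesis; and in the necessity direction, uniformity of \textit{(ii)} should come from the finite $\varepsilon $-net supplied by total boundedness together with $\Vert (h)_{B(\cdot ,r)}\Vert \leq c\Vert h\Vert $, not from Theorem \ref{theorem4}. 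With those replacements your plan becomes sound; without them the central uniformity step is a genuine gap.
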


\begin{proof}
Assume that $\tciFourier \subset \left( L_{w}^{p(.)},\ell ^{q}\right) $ is
totally bounded. Then, for every $\varepsilon >0$ there exists a finite $%
\varepsilon $-cover for the set $\tciFourier $. Thus, we take $\left\{
f_{l}\right\} _{l=1,...,m}$ $\varepsilon $-cover in $\tciFourier $ such that%
\begin{equation*}
\tciFourier \subset \tbigcup\limits_{l=1}^{m}B\left( f_{l},\varepsilon
\right) .
\end{equation*}%
The totally boundedness of $\tciFourier $ implies plainly the boundedness of 
$\tciFourier .$ Hence we get \textit{(i)}. By Theorem \ref{theorem5}, given $%
\varepsilon >0$ there exists $g\in L_{c,w}^{p(.)}\left( 
\mathbb{R}
\right) $ such that%
\begin{equation}
\left\Vert f-g\right\Vert _{\left( L_{w}^{p(.)},\ell ^{q}\right) }<\frac{%
\varepsilon }{2\max \left\{ 1,c\right\} }  \label{3.7}
\end{equation}%
where $c>0.$ Let $E$ be the compact support of $g$. Now, we will show that%
\begin{equation*}
\left\Vert g-\left( g\right) _{B\left( .,r\right) }\right\Vert
_{L_{w}^{p(.)}(E)}\longrightarrow 0
\end{equation*}%
or equivalently%
\begin{equation*}
\dint\limits_{E}\left\vert g(x)-\left( g\right) _{B\left( x,r\right)
}\right\vert ^{p(x)}w(x)dx\longrightarrow 0
\end{equation*}%
as $r\longrightarrow 0^{+}.$ By the Proposition \ref{proposition1}, we have $%
L_{w}^{p(.)}\hookrightarrow L_{loc}^{1}.$ Therefore, we have $\left(
g\right) _{B\left( x,r\right) }\longrightarrow g(x)$ for $x\in E$ as $%
r\longrightarrow 0^{+}$ by the Lebesgue differentiation theorem, see \cite%
{Hei}. If we use the boundedness of the Hardy-Littlewood maximal operator $%
Mg $ for $g\in L_{w}^{p(.)}(E)$, then we get%
\begin{eqnarray*}
\left\vert g\left( x\right) -\left( g\right) _{B\left( x,r\right)
}\right\vert ^{p\left( x\right) }w\left( x\right) &\leq &2^{p^{+}-1}\left(
\left\vert g\left( x\right) \right\vert ^{p\left( x\right) }+\left\vert
\left( g\right) _{B\left( x,r\right) }\right\vert ^{p\left( x\right)
}\right) w\left( x\right) \\
&\leq &2^{p^{+}-1}\left( \left\vert g\left( x\right) \right\vert ^{p\left(
x\right) }+\left\vert M\left( g\right) \left( x\right) \right\vert ^{p\left(
x\right) }\right) w\left( x\right) \in L^{1}\left( E\right) .
\end{eqnarray*}%
By the Lebesgue dominated convergence theorem, we have%
\begin{equation*}
\dint\limits_{E}\left\vert g(x)-\left( g\right) _{B\left( x,r\right)
}\right\vert ^{p(x)}w(x)dx<\frac{\varepsilon }{2\left\vert S(E)\right\vert ^{%
\frac{1}{q}}}.
\end{equation*}%
for sufficiently small $r>0$. Also by Proposition \ref{proposition4}, we get%
\begin{equation}
\left\Vert g-\left( g\right) _{B\left( .,r\right) }\right\Vert _{\left(
L_{w}^{p(.)},\ell ^{q}\right) }\leq \left\vert S(E)\right\vert ^{\frac{1}{q}%
}\left\Vert g-\left( g\right) _{B\left( .,r\right) }\right\Vert
_{L_{w}^{p(.)}(E)}<\frac{\varepsilon }{2}.  \label{3.10}
\end{equation}%
By Proposition \ref{proposition2}, (\ref{3.7}) and (\ref{3.10}), we have 
\begin{eqnarray*}
&&\left\Vert f-\left( f\right) _{B\left( .,r\right) }\right\Vert _{\left(
L_{w}^{p(.)},\ell ^{q}\right) } \\
&\leq &\left\Vert f-g\right\Vert _{\left( L_{w}^{p(.)},\ell ^{q}\right)
}+\left\Vert g-\left( g\right) _{B\left( .,r\right) }\right\Vert _{\left(
L_{w}^{p(.)},\ell ^{q}\right) }+\left\Vert \left( g\right) _{B\left(
.,r\right) }-\left( f\right) _{B\left( .,r\right) }\right\Vert _{\left(
L_{w}^{p(.)},\ell ^{q}\right) } \\
&\leq &\left\Vert f-g\right\Vert _{\left( L_{w}^{p(.)},\ell ^{q}\right)
}+\left\Vert M\left( g-f\right) \right\Vert _{\left( L_{w}^{p(.)},\ell
^{q}\right) }+\left\Vert g-\left( g\right) _{B\left( .,r\right) }\right\Vert
_{\left( L_{w}^{p(.)},\ell ^{q}\right) } \\
&\leq &\max \left\{ 1,c\right\} \left\Vert f-g\right\Vert _{\left(
L_{w}^{p(.)},\ell ^{q}\right) }+\left\Vert g-\left( g\right) _{B\left(
.,r\right) }\right\Vert _{\left( L_{w}^{p(.)},\ell ^{q}\right) }<\varepsilon
.
\end{eqnarray*}%
This completes the proof of \textit{(ii)}. If we use similar method in
Theorem \ref{theorem6}, then we get \textit{(iii)}.

Now, we assume that the conditions \textit{(i)}, \textit{(ii)} and \textit{%
(iii)} are satisfied. Since $\left( L_{w}^{p(.)},\ell ^{q}\right) $ is a
solid Banach function space, the proof is completed by \cite[Theorem 3.1]{Gr}%
.
\end{proof}

\begin{remark}
Let $\Omega \subset 
\mathbb{R}
^{n}$ be an open set. The set $L_{loc,w}^{p(.)}\left( \Omega \right) $ is
defined by 
\begin{equation*}
L_{loc,w}^{p(.)}\left( \Omega \right) =\left\{ f:f\chi _{K}\in
L_{w}^{p(.)}\left( \Omega \right) \text{ for any compact subset }K\subset
\Omega \right\}
\end{equation*}%
with the usual identification of functions that are equal almost everywhere.
Moreover, by \cite[Lemma 2.2]{Gg} it is well known that there exists a
sequence of compact subsets $\left\{ K_{j}\right\} _{j\in 
\mathbb{N}
}$ such that 
\begin{equation*}
K_{1}\subset K_{2}\subset ...\subset K_{j}\subset ...\text{and \ \ \ }\Omega
=\tbigcup_{j\in 
\mathbb{N}
}K_{j}
\end{equation*}%
where $K_{j}=\left\{ x\in \Omega :\left\vert x\right\vert \leq j\text{ and }%
dist\left( x,\complement \Omega \right) \geq \frac{1}{j}\right\} $. Here,
the complement of $\Omega $ is denoted by $\complement \Omega .$

$L_{loc,w}^{p(.)}\left( \Omega \right) $ is equipped with topology of $%
L_{w}^{p(.)}\left( \Omega \right) $ convergence on compact subsets of $%
\Omega $. In addition, any compact subset of $\Omega $ is contained in some $%
K_{j}$, and then the space $L_{loc,w}^{p(.)}\left( \Omega \right) $ is a
topological vector space with the countable family of seminorms,%
\begin{equation*}
p_{j}(f)=\left\Vert f\chi _{K_{j}}\right\Vert _{L_{w}^{p(.)}\left( \Omega
\right) }\text{, \ }j=1,2,...
\end{equation*}%
Moreover, the space $L_{loc,w}^{p(.)}\left( \Omega \right) $ is a complete
with respect to the metric $\left( f,g\right) \longrightarrow
\dsum\limits_{j=1}^{\infty }\min \left( (2^{-j},p_{j}(f-g)\right) $. Hence
it is obtained that $L_{loc,w}^{p(.)}\left( \Omega \right) $ is a Fr\v{e}%
chet space.
\end{remark}

The following theorem is proved by \cite{Hoh} for constant exponent.

\begin{theorem}
A subset $\tciFourier \subset L_{loc,w}^{p(.)}\left( \Omega \right) $ is
totally bounded if and only if

\begin{enumerate}
\item[\textit{(i)}] For every compact $K\subset \Omega $ there is some $C>0$
such that%
\begin{equation*}
\dint\limits_{\Omega }\left\vert f_{K}(x)\right\vert ^{p(x)}w(x)dx<C,\text{
\ }f\in \tciFourier
\end{equation*}%
where $f_{K}(x)=\left\{ 
\begin{array}{c}
f(x), \\ 
0,%
\end{array}%
\right. 
\begin{array}{c}
x\in K \\ 
otherwise%
\end{array}%
.$

\item[\textit{(ii)}] For every $\varepsilon >0$ and every compact $K\subset
\Omega $ there is some $r>0$ such that%
\begin{equation*}
\left\Vert f_{K}\ast \varphi _{\varepsilon }-f_{K}\right\Vert
_{L_{w}^{p(.)}(\Omega )}<\varepsilon ,\text{ \ }f\in \tciFourier
\end{equation*}%
where $f_{K}(x)=\left\{ 
\begin{array}{c}
f(x), \\ 
0,%
\end{array}%
\right. 
\begin{array}{c}
x\in K \\ 
otherwise%
\end{array}%
.$
\end{enumerate}
\end{theorem}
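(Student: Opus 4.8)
The plan is to reduce the assertion to the weighted variable exponent Riesz--Kolmogorov theorem (Theorem~\ref{theorem3}) by cutting functions down to the exhausting compacta $K_{j}$ described in the preceding Remark, and then exploiting the Fr\'{e}chet (countable-seminorm) structure of $L_{loc,w}^{p(.)}\left( \Omega \right) $. First I would record the purely topological fact that, since $K_{1}\subset K_{2}\subset \cdots $, the seminorms are monotone ($p_{1}\leq p_{2}\leq \cdots $) and the restriction maps $R_{j}:f\longmapsto f\chi _{K_{j}}$ are uniformly continuous from $L_{loc,w}^{p(.)}\left( \Omega \right) $ into $L_{w}^{p(.)}(\mathbb{R}^{n})$ (note that a function supported in $K_{j}\subset \Omega $ has the same norm in $L_{w}^{p(.)}\left( \Omega \right) $ and in $L_{w}^{p(.)}(\mathbb{R}^{n})$, since $L_{w}^{p(.)}$ is solid). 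Using this I would prove the equivalence: $\tciFourier $ is totally bounded in $L_{loc,w}^{p(.)}\left( \Omega \right) $ if and only if, for every $j$, the family $\tciFourier _{j}:=\left\{ f\chi _{K_{j}}:f\in \tciFourier \right\} $ is totally bounded in $L_{w}^{p(.)}(\mathbb{R}^{n})$. The forward implication is immediate (a continuous image of a totally bounded set is totally bounded); the converse is exactly the criterion of Hanche-Olsen and Holden \cite{Hoh} quoted above, applied with $\Phi =R_{N}$ for $N$ chosen so large that $\sum_{j>N}2^{-j}$ is small, using $p_{j}\leq p_{N}$ for $j\leq N$ to turn a finite net for $\tciFourier _{N}$ into a finite $\varepsilon $-net for $\tciFourier $ in the metric of $L_{loc,w}^{p(.)}\left( \Omega \right) $. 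Because any compact $K\subset \Omega $ sits inside some $K_{j}$, it then suffices to work with the sets $K_{j}$ (and, for the necessity direction, with arbitrary compact $K$).

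The second step is to feed the families $\tciFourier _{j}$ (respectively $R_{K}(\tciFourier )$) into Theorem~\ref{theorem3} and observe that its three hypotheses collapse to the two conditions of the present theorem as soon as the functions are supported in a fixed compact set. Indeed, condition (ii) of Theorem~\ref{theorem3} (uniform smallness of $\int_{\left\vert x\right\vert >\gamma }\left\vert f\right\vert ^{p(.)}w$) is vacuous here, since $K_{j}\subset \overline{B(0,j)}$ makes those tail integrals vanish for $\gamma \geq j$; condition (i) of Theorem~\ref{theorem3} (uniform boundedness of $\left\Vert f\chi _{K_{j}}\right\Vert _{L_{w}^{p(.)}}$) is equivalent, by the modular--norm inequalities recorded for $L_{w}^{p(.)}$ together with $p^{+}<\infty $, to the uniform modular estimate $\int_{\Omega }\left\vert f_{K_{j}}(x)\right\vert ^{p(x)}w(x)\,dx<C$, i.e. condition (i) here with $K=K_{j}$; and condition (iii) of Theorem~\ref{theorem3} (uniform convergence of the mollifications) is condition (ii) here with $K=K_{j}$. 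With this dictionary, necessity follows by applying the ``only if'' part of Theorem~\ref{theorem3} to each totally bounded image $R_{K}(\tciFourier )$ and reading off its (i) and (iii); sufficiency follows by applying the ``if'' part of Theorem~\ref{theorem3} to each $\tciFourier _{j}$ to conclude it is relatively compact, hence totally bounded in the complete space $L_{w}^{p(.)}(\mathbb{R}^{n})$, and then invoking the equivalence from the first step to get that $\tciFourier $ is totally bounded in $L_{loc,w}^{p(.)}\left( \Omega \right) $.

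The genuinely routine parts are the continuity of the restriction maps and the $\varepsilon $-net bookkeeping in the Fr\'{e}chet metric $\left( f,g\right) \longmapsto \sum_{j}\min \left( 2^{-j},p_{j}(f-g)\right) $. I expect the only real obstacle to be the faithful translation between the local picture on $\Omega $ (seminorms attached to the $K_{j}$) and the global Riesz--Kolmogorov criterion on $\mathbb{R}^{n}$, together with the bookkeeping that shows the three hypotheses of Theorem~\ref{theorem3} degenerate to precisely conditions (i)--(ii) of the statement; one also has to note that the standing assumptions $p(.)\in P^{\log }(\mathbb{R}^{n})$, $1<p^{-}\leq p^{+}<\infty $ and $w\in A_{p(.)}$ are hypotheses on all of $\mathbb{R}^{n}$, so Theorem~\ref{theorem3} is legitimately applicable to the truncated families. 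Once the equivalence of the first step is in hand, the theorem is a direct unwinding of Theorem~\ref{theorem3}.
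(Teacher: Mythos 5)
Your proposal is correct and takes essentially the same route as the paper: the paper's entire proof is the single remark that $\tciFourier$ is totally bounded in $L_{loc,w}^{p(.)}\left( \Omega \right)$ if and only if each truncated family $\tciFourier _{j}=\left\{ f_{K_{j}}:f\in \tciFourier \right\}$ is totally bounded, with the application of Theorem \ref{theorem3} (vacuous tail condition, modular versus norm boundedness) left implicit. Your write-up simply supplies the Fr\'{e}chet-metric bookkeeping and this dictionary, which the paper omits.
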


\begin{proof}
The subset $\tciFourier \subset L_{loc,w}^{p(.)}\left( \Omega \right) $ is
totally bounded in $L_{loc,w}^{p(.)}\left( \Omega \right) $ if and only if $%
\tciFourier _{j}=\left\{ f_{K_{j}}:f\in \tciFourier \right\} $ is totally
bounded for every $j$, with $K_{j}$ as defined above.
\end{proof}

\begin{remark}
Let $\left\{ A_{k}\right\} _{k\in 
\mathbb{Z}
}$ be a family of Banach spaces. We define the space $\ell ^{q}\left(
A_{k}\right) $ given by%
\begin{equation*}
\ell ^{q}\left( A_{k}\right) =\left\{ x=\left( x_{k}\right) :x_{k}\in
A_{k},\left\Vert x\right\Vert <\infty \right\} ,
\end{equation*}%
where $\left\Vert x\right\Vert =\left( \tsum\limits_{k\in 
\mathbb{Z}
}\left\Vert x_{k}\right\Vert _{A_{k}}^{q}\right) ^{\frac{1}{q}}.$ It can be
seen that $\ell ^{q}\left( A_{k}\right) $ is a Banach space with respect to
the norm $\left\Vert .\right\Vert .$ Moreover, $\left( L_{w}^{p(.)},\ell
^{q}\right) $ is a particular case of $\ell ^{q}\left( A_{k}\right) $.
Indeed, if we define the amalgam space as 
\begin{equation*}
\left( L_{w}^{p(.)},\ell ^{q}\right) =\left\{ f\in L_{loc,w}^{p(.)}\left( 
\mathbb{R}
\right) :\left\{ \left\Vert f\chi _{J_{k}}\right\Vert _{L_{w}^{p(.)}(%
\mathbb{R}
)}\right\} _{k\in 
\mathbb{Z}
}\in \ell ^{q}\right\}
\end{equation*}%
take $A_{k}=L_{w}^{p(.)}\left( J_{k}\right) ,$ $J_{k}=\left[ k,k+1\right) $,
then the map $f\longrightarrow \left( f_{k}\right) $, $f_{k}=f\chi _{J_{k}}$
is an isometric isomorphism from $\left( L_{w}^{p(.)},\ell ^{q}\right) $ to $%
\ell ^{q}\left( L_{w}^{p(.)}\left( J_{k}\right) \right) ,$ see \cite{Ayd}, 
\cite{Fou}. Hence, for the totally boundedness of $\tciFourier \subset
\left( L_{w}^{p(.)},\ell ^{q}\right) $ we can use the Theorem \ref{theorem2}%
. Note that $\tciFourier $ is totally bounded in $\left( L_{w}^{p(.)},\ell
^{q}\right) $ if and only if the set $\left\{ \left\{ \left\Vert f\chi
_{J_{k}}\right\Vert _{L_{w}^{p(.)}(%
\mathbb{R}
)}\right\} _{k\in 
\mathbb{Z}
}:f\in \tciFourier \right\} $ is totally bounded in $\ell ^{q}\left(
L_{w}^{p(.)}\left( J_{k}\right) \right) $ for $k\in 
\mathbb{Z}
$ with $1\leq q<\infty .$
\end{remark}

\section{\textbf{Weighted Variable Exponent Sobolev Spaces}}

Let $\vartheta ^{-\frac{1}{p(.)-1}}\in L_{loc}^{1}\left( 
\mathbb{R}
^{n}\right) .$ Since every function in $L_{\vartheta }^{p(.)}(%
\mathbb{R}
^{n})$ has distributional derivatives by Proposition \ref{proposition1}, we
get that the weighted variable exponent Sobolev spaces $W_{\vartheta
}^{k,p(.)}\left( 
\mathbb{R}
^{n}\right) $ are well defined.

\begin{definition}
Let $1<p^{-}\leq p(x)\leq p^{+}<\infty $, $\vartheta ^{-\frac{1}{p(.)-1}}\in
L_{loc}^{1}\left( 
\mathbb{R}
^{n}\right) $ and $k\in 
\mathbb{N}
.$ We define the weighted variable Sobolev spaces $W_{\vartheta
}^{k,p(.)}\left( 
\mathbb{R}
^{n}\right) $ by%
\begin{equation*}
W_{\vartheta }^{k,p(.)}\left( 
\mathbb{R}
^{n}\right) =\left\{ f\in L_{\vartheta }^{p(.)}(%
\mathbb{R}
^{n}):D^{\alpha }f\in L_{\vartheta }^{p(.)}(%
\mathbb{R}
^{n}),\text{ }0\leq \left\vert \alpha \right\vert \leq k\right\}
\end{equation*}%
equipped with the norm 
\begin{equation*}
\left\Vert f\right\Vert _{W_{\vartheta }^{k,p(.)}\left( 
\mathbb{R}
^{n}\right) }=\dsum\limits_{0\leq \left\vert \alpha \right\vert \leq
k}\left\Vert D^{\alpha }f\right\Vert _{L_{\vartheta }^{p(.)}(%
\mathbb{R}
^{n})},
\end{equation*}%
where $\alpha \in 
\mathbb{N}
_{0}^{n}$ is a multi-index, $\left\vert \alpha \right\vert =\alpha
_{1}+\alpha _{2}+...+\alpha _{n}$ and $D^{\alpha }=\frac{\partial
^{\left\vert \alpha \right\vert }}{\partial _{x_{1}}^{\alpha
_{1}}...\partial _{x_{n}}^{\alpha _{n}}}$. Moreover, the space $W_{\vartheta
}^{k,p(.)}\left( 
\mathbb{R}
^{n}\right) $ is a reflexive Banach space.
\end{definition}

The space $W_{\vartheta }^{1,p(.)}\left( 
\mathbb{R}
^{n}\right) $ is defined by 
\begin{equation*}
W_{\vartheta }^{1,p(.)}\left( 
\mathbb{R}
^{n}\right) =\left\{ f\in L_{\vartheta }^{p(.)}(%
\mathbb{R}
^{n}):\left\vert \nabla f\right\vert \in L_{\vartheta }^{p(.)}(%
\mathbb{R}
^{n})\right\} .
\end{equation*}%
The dual space of $W_{\vartheta }^{1,p(.)}\left( 
\mathbb{R}
^{n}\right) $ is denoted by $W_{\vartheta ^{\ast }}^{-1,q(.)}\left( 
\mathbb{R}
^{n}\right) $ where $\vartheta ^{\ast }=\vartheta ^{1-q\left( .\right) }$.
The function $\varrho _{1,p(.),\vartheta }:W_{\vartheta }^{1,p(.)}\left( 
\mathbb{R}
^{n}\right) \longrightarrow \left[ 0,\infty \right) $ is defined as $\varrho
_{1,p(.),\vartheta }(f)=\varrho _{p(.),\vartheta }(f)+\varrho
_{p(.),\vartheta }(\nabla f)$ for every $f\in W_{\vartheta }^{1,p(.)}\left( 
\mathbb{R}
^{n}\right) $. Also, the norm $\left\Vert f\right\Vert _{W_{\vartheta
}^{1,p(.)}\left( 
\mathbb{R}
^{n}\right) }=\left\Vert f\right\Vert _{L_{\vartheta }^{p(.)}\left( 
\mathbb{R}
^{n}\right) }+\left\Vert \nabla f\right\Vert _{L_{\vartheta }^{p(.)}\left( 
\mathbb{R}
^{n}\right) }$ makes the space $W_{\vartheta }^{1,p(.)}\left( 
\mathbb{R}
^{n}\right) $ a Banach space, see \cite{Kor}.

\begin{theorem}
A subset $\tciFourier \subset W_{\vartheta }^{k,p(.)}\left( 
\mathbb{R}
^{n}\right) $ is totally bounded if and only if

\begin{enumerate}
\item[\textit{(i)}] $\tciFourier $ is bounded in $W_{\vartheta
}^{k,p(.)}\left( 
\mathbb{R}
^{n}\right) $, i.e. there is a $C>0$ such that for $f\in \tciFourier $ and $%
0\leq \left\vert \alpha \right\vert \leq k$%
\begin{equation*}
\int_{\Omega }\left\vert D^{\alpha }f\right\vert ^{p(x)}\vartheta (x)dx<C%
\text{.}
\end{equation*}

\item[\textit{(ii)}] For every $\varepsilon >0$ there is a $\gamma >0$ such
that for all $f\in \tciFourier $ and $0\leq \left\vert \alpha \right\vert
\leq k$%
\begin{equation*}
\left\Vert D^{\alpha }f\right\Vert _{p(.),\vartheta \left( \left\vert
x\right\vert >\gamma \right) }<\varepsilon
\end{equation*}%
or equivalently%
\begin{equation*}
\dint\limits_{\left\vert x\right\vert >\gamma }\left\vert D^{\alpha
}f(x)\right\vert ^{p(x)}\vartheta (x)dx<\varepsilon .
\end{equation*}

\item[\textit{(iii)}] $\underset{\varepsilon \longrightarrow 0^{+}}{\lim }%
\left\Vert D^{\alpha }\left( f\ast \varphi _{\varepsilon }\right) -D^{\alpha
}f\right\Vert _{L_{\vartheta }^{p(.)}(%
\mathbb{R}
^{n})}=0$ uniformly for $f\in \tciFourier $ and $0\leq \left\vert \alpha
\right\vert \leq k$ where $\varphi _{\varepsilon }$ is a mollifier function.
\end{enumerate}
\end{theorem}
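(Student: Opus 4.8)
The plan is to reduce the statement for $W_{\vartheta}^{k,p(.)}(\mathbb{R}^{n})$ to the already-established Kolmogorov--Riesz theorem for $L_{\vartheta}^{p(.)}(\mathbb{R}^{n})$ (Theorem \ref{theorem3}) by means of the natural isometric embedding $\iota: W_{\vartheta}^{k,p(.)}(\mathbb{R}^{n}) \hookrightarrow \prod_{0\le |\alpha|\le k} L_{\vartheta}^{p(.)}(\mathbb{R}^{n})$ given by $f \mapsto (D^{\alpha}f)_{0\le|\alpha|\le k}$, where the product is normed by the $\ell^{1}$-sum of the component norms. Since $\|f\|_{W_{\vartheta}^{k,p(.)}} = \sum_{0\le|\alpha|\le k}\|D^{\alpha}f\|_{L_{\vartheta}^{p(.)}}$ by definition, this map is an isometry onto its image, and a subset of a metric space is totally bounded if and only if its image under any isometry is; so $\tciFourier$ is totally bounded in $W_{\vartheta}^{k,p(.)}$ iff $\iota[\tciFourier]$ is totally bounded in the finite product. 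A finite product of metric spaces (with the sum metric) has a subset totally bounded iff each coordinate projection of that subset is totally bounded; hence $\tciFourier$ is totally bounded iff $\tciFourier_{\alpha} := \{D^{\alpha}f : f\in\tciFourier\}$ is totally bounded (equivalently, relatively compact, since $L_{\vartheta}^{p(.)}$ is complete) in $L_{\vartheta}^{p(.)}(\mathbb{R}^{n})$ for every multi-index $\alpha$ with $0\le|\alpha|\le k$.

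Next I would apply Theorem \ref{theorem3} to each $\tciFourier_{\alpha}$. That theorem states $\tciFourier_{\alpha}$ is relatively compact in $L_{\vartheta}^{p(.)}$ iff (a) $\tciFourier_{\alpha}$ is bounded, (b) $\|D^{\alpha}f\|_{L_{\vartheta}^{p(.)}(|x|>\gamma)} \to 0$ uniformly in $f$, and (c) $\|(D^{\alpha}f)\ast\varphi_{\varepsilon} - D^{\alpha}f\|_{L_{\vartheta}^{p(.)}} \to 0$ uniformly in $f$ as $\varepsilon\to0^{+}$. Taking the conjunction over the finitely many $\alpha$'s, conditions (a), (b), (c) become exactly conditions (i), (ii), (iii) in the statement of the present theorem; here one uses the standard identity $(D^{\alpha}f)\ast\varphi_{\varepsilon} = D^{\alpha}(f\ast\varphi_{\varepsilon})$ for $f \in W_{\vartheta}^{k,p(.)}$ and a compactly supported smooth mollifier, which is how condition (iii) of the theorem is phrased. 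So the equivalence follows by collecting these three conditions across all multi-indices and invoking that a finite intersection of ``relatively compact'' conditions on the projections characterizes relative compactness of the set in the product.

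I should be careful about two points of rigor. First, I must check that Theorem \ref{theorem3} is applicable to each $L_{\vartheta}^{p(.)}(\mathbb{R}^{n})$: this requires $p(.)\in P^{\log}(\mathbb{R}^{n})$, $1<p^{-}\le p^{+}<\infty$, and $\vartheta\in A_{p(.)}$. The hypotheses of the Sobolev section only assume $\vartheta^{-1/(p(.)-1)}\in L_{loc}^{1}$ (enough for distributional derivatives to exist via Proposition \ref{proposition1}), so I would add the standing assumption $\vartheta\in A_{p(.)}$ (and $p(.)\in P^{\log}$), as is done throughout the paper, or note it is in force. Second, I need the elementary fact that in a finite product $\prod_{j=1}^{N} X_{j}$ with the metric $d((x_j),(y_j)) = \sum_j d_j(x_j,y_j)$, a set $S$ is totally bounded iff each projection $\pi_j(S)$ is totally bounded: the forward direction is immediate since $\pi_j$ is $1$-Lipschitz, and the reverse direction follows by covering each $\pi_j(S)$ with finitely many $(\varepsilon/N)$-balls and taking the product net. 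This is routine but should be stated.

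The only genuine obstacle is the identity $D^{\alpha}(f\ast\varphi_{\varepsilon}) = (D^{\alpha}f)\ast\varphi_{\varepsilon}$ at the level of $W_{\vartheta}^{k,p(.)}$ functions (so that condition (iii) as written is equivalent to condition (c) above applied to $D^{\alpha}f$). This is the familiar fact that convolution with a mollifier commutes with distributional differentiation, valid because $\varphi_{\varepsilon}\in C_{0}^{\infty}$ and $D^{\alpha}f\in L_{\vartheta}^{p(.)}\hookrightarrow L_{loc}^{1}$ (Proposition \ref{proposition1}); I would simply cite it. Everything else is a clean transfer through the isometric embedding, so the proof is short: state the embedding, reduce to the coordinatewise criterion, apply Theorem \ref{theorem3} to each coordinate, and observe the three conditions match up verbatim.
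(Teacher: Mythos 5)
Your proposal is correct and follows essentially the same route as the paper: the paper's proof is the one-line observation that $\tciFourier$ is totally bounded in $W_{\vartheta }^{k,p(.)}\left( \mathbb{R}^{n}\right)$ if and only if each $D^{\alpha }\left[ \tciFourier \right]$ is totally bounded in $L_{\vartheta }^{p(.)}\left( \mathbb{R}^{n}\right)$, then invoking Theorem \ref{theorem3} coordinatewise. Your write-up merely makes explicit the supporting details (the isometric embedding into the finite product, the product total-boundedness criterion, the identity $D^{\alpha }\left( f\ast \varphi _{\varepsilon }\right) =\left( D^{\alpha }f\right) \ast \varphi _{\varepsilon }$, and the standing hypotheses $p(.)\in P^{\log }$, $\vartheta \in A_{p(.)}$) that the paper leaves tacit.
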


\begin{proof}
Note that $\tciFourier $ is totally bounded in $W_{\vartheta
}^{k,p(.)}\left( 
\mathbb{R}
^{n}\right) $ if and only if $D^{\alpha }\left[ \tciFourier \right] =\left\{
D^{\alpha }f:f\in \tciFourier \right\} $ is totally bounded in $L_{\vartheta
}^{p(.)}\left( 
\mathbb{R}
^{n}\right) $ for every multi-index $\alpha $ with $0\leq \left\vert \alpha
\right\vert \leq k$ by Theorem \ref{theorem3}.
\end{proof}

Using \cite[Theorem 5]{Gom} we have an extension of \cite[Corollary 9]{Hoh}
to weighted variable exponent Sobolev spaces $W_{\vartheta }^{k,p(.)}\left( 
\mathbb{R}
^{n}\right) $.

\begin{theorem}
Let $\tciFourier \subset W_{\vartheta }^{k,p(.)}\left( 
\mathbb{R}
^{n}\right) $ be given. If the following conditions are satisfied

\begin{enumerate}
\item[\textit{(i)}] $\tciFourier $ is bounded in $W_{\vartheta
}^{k,p(.)}\left( 
\mathbb{R}
^{n}\right) $, i.e. there is $C>0$ such that for $f\in \tciFourier $ and $%
0\leq \left\vert \alpha \right\vert \leq k$%
\begin{equation*}
\int_{\Omega }\left\vert D^{\alpha }f\right\vert ^{p(x)}\vartheta (x)dx<C.
\end{equation*}

\item[\textit{(ii)}] for every $\varepsilon >0$ there is a $\gamma >0$ such
that for all $f\in \tciFourier $ and $0\leq \left\vert \alpha \right\vert
\leq k$%
\begin{equation*}
\left\Vert D^{\alpha }f\right\Vert _{L_{\vartheta }^{p\left( .\right)
}\left( \left\vert x\right\vert >\gamma \right) }<\varepsilon
\end{equation*}%
or equivalently%
\begin{equation*}
\dint\limits_{\left\vert x\right\vert >\gamma }\left\vert D^{\alpha
}f(x)\right\vert ^{p(x)}\vartheta (x)dx<\varepsilon .
\end{equation*}

\item[\textit{(iii)}] for every $\varepsilon >0$ there is a $\rho >0$ such
that 
\begin{equation*}
\dint\limits_{%
\mathbb{R}
^{n}}\left\vert D^{\alpha }f(x+y)-D^{\alpha }f(x)\right\vert
^{p(x)}\vartheta (x)dx<\varepsilon ,
\end{equation*}%
for $f\in \tciFourier $, $0\leq \left\vert \alpha \right\vert \leq k$ and $%
\left\vert y\right\vert <\rho ,$
\end{enumerate}

then, $\tciFourier $ is totally bounded.
\end{theorem}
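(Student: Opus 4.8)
The plan is to reduce the claim to the Kolmogorov--Riesz characterization already available in the scalar space $L_{\vartheta }^{p(.)}(\mathbb{R}^{n})$, namely Theorem \ref{theorem3}, by the same finite-product argument used for $W_{\vartheta }^{k,p(.)}(\mathbb{R}^{n})$ just above. The map $f\longmapsto \bigl( D^{\alpha }f\bigr) _{0\leq |\alpha |\leq k}$ embeds $W_{\vartheta }^{k,p(.)}(\mathbb{R}^{n})$ isometrically into the finite direct sum $\bigoplus _{0\leq |\alpha |\leq k}L_{\vartheta }^{p(.)}(\mathbb{R}^{n})$ with the $\ell ^{1}$-sum norm, and a subset of a finite product of metric spaces is totally bounded exactly when each of its coordinate projections is totally bounded (one direction is that Lipschitz images of totally bounded sets are totally bounded; the other follows by taking an $\varepsilon /m$-net in each factor and forming their product). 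Consequently it suffices to show that, under hypotheses (i)--(iii), the family $D^{\alpha }[\tciFourier ]=\{D^{\alpha }f:f\in \tciFourier \}$ is totally bounded in $L_{\vartheta }^{p(.)}(\mathbb{R}^{n})$ for every multi-index $\alpha $ with $0\leq |\alpha |\leq k$, and for this I will verify the three conditions of Theorem \ref{theorem3} for $D^{\alpha }[\tciFourier ]$.

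Conditions (i) and (ii) of Theorem \ref{theorem3} for the family $D^{\alpha }[\tciFourier ]$ are precisely hypotheses (i) and (ii) of the present statement, so nothing has to be done there. The only real step is to derive condition (iii) of Theorem \ref{theorem3} — uniform mollifier convergence — from the translation hypothesis (iii). Fix $\alpha $ and a mollifier $\varphi _{\varepsilon }$, supported in $B(0,\varepsilon )$ with $\int \varphi _{\varepsilon }=1$. Since convolution with a test function commutes with distributional differentiation, $D^{\alpha }(f\ast \varphi _{\varepsilon })=(D^{\alpha }f)\ast \varphi _{\varepsilon }$, hence
\begin{equation*}
D^{\alpha }(f\ast \varphi _{\varepsilon })(x)-D^{\alpha }f(x)=\int_{\mathbb{R}^{n}}\varphi _{\varepsilon }(y)\bigl( D^{\alpha }f(x-y)-D^{\alpha }f(x)\bigr) \,dy .
\end{equation*}
As $\varphi _{\varepsilon }(y)\,dy$ is a probability measure and $t\longmapsto |t|^{p(x)}$ is convex, Jensen's inequality yields
\begin{equation*}
\bigl\vert D^{\alpha }(f\ast \varphi _{\varepsilon })(x)-D^{\alpha }f(x)\bigr\vert ^{p(x)}\leq \int_{\mathbb{R}^{n}}\varphi _{\varepsilon }(y)\bigl\vert D^{\alpha }f(x-y)-D^{\alpha }f(x)\bigr\vert ^{p(x)}\,dy ,
\end{equation*}
and multiplying by $\vartheta (x)$, integrating in $x$ and applying Tonelli's theorem we get
\begin{equation*}
\varrho _{p(.),\vartheta }\bigl( D^{\alpha }(f\ast \varphi _{\varepsilon })-D^{\alpha }f\bigr) \leq \sup_{|y|<\varepsilon }\int_{\mathbb{R}^{n}}\bigl\vert D^{\alpha }f(x-y)-D^{\alpha }f(x)\bigr\vert ^{p(x)}\vartheta (x)\,dx .
\end{equation*}
The supremum is over a ball, so the sign of $y$ is immaterial, and by hypothesis (iii) the right-hand side is smaller than any prescribed quantity, simultaneously for all $f\in \tciFourier $, once $\varepsilon $ is small enough. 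Passing from the modular to the norm via $\left\Vert g\right\Vert _{L_{\vartheta }^{p(.)}(\mathbb{R}^{n})}\leq \max \bigl\{\varrho _{p(.),\vartheta }(g)^{1/p^{-}},\varrho _{p(.),\vartheta }(g)^{1/p^{+}}\bigr\}$ turns this into $\lim_{\varepsilon \to 0^{+}}\left\Vert D^{\alpha }(f\ast \varphi _{\varepsilon })-D^{\alpha }f\right\Vert _{L_{\vartheta }^{p(.)}(\mathbb{R}^{n})}=0$ uniformly on $\tciFourier $, which is condition (iii) of Theorem \ref{theorem3}.

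With (i)--(iii) of Theorem \ref{theorem3} verified for each $D^{\alpha }[\tciFourier ]$, $0\leq |\alpha |\leq k$, that theorem shows every $D^{\alpha }[\tciFourier ]$ is relatively compact, hence totally bounded, in $L_{\vartheta }^{p(.)}(\mathbb{R}^{n})$; by the reduction of the first paragraph, $\tciFourier $ is then totally bounded in $W_{\vartheta }^{k,p(.)}(\mathbb{R}^{n})$. Equivalently, one may transplant \cite[Theorem 5]{Gom} to $L_{\vartheta }^{p(.)}(\mathbb{R}^{n})$ — a sufficient condition for precompactness in terms of boundedness, uniform decay at infinity and uniform $L^{p(.)}$-continuity of translations — by rerunning its proof with the $A_{p(.)}$-weight $\vartheta $ in place, and apply it directly to each $D^{\alpha }[\tciFourier ]$; this is the route indicated before the statement. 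I expect the only genuine obstacle to be the \emph{uniformity} in the above passage: the Jensen--Tonelli estimate must control $\left\Vert D^{\alpha }(f\ast \varphi _{\varepsilon })-D^{\alpha }f\right\Vert _{L_{\vartheta }^{p(.)}}$ by a quantity depending on $\varepsilon $ alone, which is exactly what hypothesis (iii) provides since it is assumed uniform over $\tciFourier $; everything else is the routine finite-product reduction already used for $(L_{w}^{p(.)},\ell ^{q})$ and for $W_{\vartheta }^{k,p(.)}$ earlier in the paper.
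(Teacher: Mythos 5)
Your argument is correct, and it is worth noting how it relates to what the paper actually does: for this theorem the paper supplies no written proof at all, only the remark that the result follows "using \cite[Theorem 5]{Gom}" as an extension of \cite[Corollary 9]{Hoh} — i.e.\ the componentwise reduction (totally bounded in $W_{\vartheta}^{k,p(.)}$ iff each $D^{\alpha}[\tciFourier]$ is totally bounded in $L_{\vartheta}^{p(.)}$) together with a weighted transplant of the translation-based precompactness criterion of G\'orka--Macios, a transplant the paper never establishes. You perform the same finite-product reduction, but instead of citing that external criterion you bridge the gap yourself: from the translation hypothesis \textit{(iii)} you deduce the uniform mollifier convergence required in condition \textit{(iii)} of Theorem \ref{theorem3} via $D^{\alpha}(f\ast\varphi_{\varepsilon})=(D^{\alpha}f)\ast\varphi_{\varepsilon}$, Jensen's inequality for the convex functions $t\mapsto|t|^{p(x)}$ against the probability measure $\varphi_{\varepsilon}(y)\,dy$, and Tonelli, then pass from modular to norm using the inequality $\Vert g\Vert_{L_{\vartheta}^{p(.)}}\leq\max\{\varrho_{p(.),\vartheta}(g)^{1/p^{-}},\varrho_{p(.),\vartheta}(g)^{1/p^{+}}\}$, which preserves uniformity over $\tciFourier$ because $1<p^{-}\leq p^{+}<\infty$. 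Hypotheses \textit{(i)} and \textit{(ii)} are conditions \textit{(i)} and \textit{(ii)} of Theorem \ref{theorem3} verbatim, so Theorem \ref{theorem3} applies to each $D^{\alpha}[\tciFourier]$ and the isometric embedding into the finite $\ell^{1}$-sum of copies of $L_{\vartheta}^{p(.)}$ finishes the argument. What your route buys is self-containedness: it rests only on results stated in the paper (Theorem \ref{theorem3} and the modular--norm inequalities), whereas the paper's one-line justification leans on an unproved weighted version of \cite[Theorem 5]{Gom}; the only steps you pass over lightly — commuting $D^{\alpha}$ with the mollification (legitimate since $D^{\alpha}f\in L^{1}_{loc}$ by Proposition \ref{proposition1}) and the harmless sign change $y\mapsto -y$ inside the ball $|y|<\rho$ — are routine.
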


\section{\textbf{Applications}}

In this section, using a compact embedding theorem for weighted variable
exponent Sobolev spaces $W_{\vartheta }^{1,p(.)}\left( 
\mathbb{R}
^{n}\right) ,$ we discuss totally bounded subsets of $L_{\vartheta }^{q(.)}(%
\mathbb{R}
^{n})$.

\begin{theorem}
\label{theorem11}(\cite{Li})Let $p(.),q(.),\vartheta _{1}$ and $\vartheta
_{2}$ satisfy hypotheses in \cite[Corollary 3.1]{Li}. Then the continuous
embedding $W_{\vartheta _{1}}^{1,p(.)}\left( 
\mathbb{R}
^{n}\right) \hookrightarrow L_{\vartheta _{2}}^{q(.)}(%
\mathbb{R}
^{n})$ is satisfied.
\end{theorem}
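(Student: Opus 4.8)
The plan is to deduce the embedding from a weighted variable exponent Hardy--Littlewood--Sobolev inequality for the Riesz potential of order one, using density of smooth functions to reduce to a pointwise representation of $f$ in terms of $\nabla f$. First I would observe that, since $C_0^\infty(\mathbb{R}^n)$ is dense in $W_{\vartheta_1}^{1,p(.)}(\mathbb{R}^n)$ under the standing assumptions (which follows by mollification and truncation, in the spirit of Corollary~\ref{corollary1}), it suffices to prove
\begin{equation*}
\|f\|_{L_{\vartheta_2}^{q(.)}(\mathbb{R}^n)}\le C\,\|f\|_{W_{\vartheta_1}^{1,p(.)}(\mathbb{R}^n)}
\end{equation*}
for $f\in C_0^\infty(\mathbb{R}^n)$ with $C$ independent of $f$, and then to extend this by continuity. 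For such $f$ the classical pointwise representation (obtained by integrating $\nabla f$ along rays and averaging over directions) gives
\begin{equation*}
|f(x)|\le c_n\int_{\mathbb{R}^n}\frac{|\nabla f(y)|}{|x-y|^{n-1}}\,dy=c_n\,(I_1|\nabla f|)(x),
\end{equation*}
where $I_1$ denotes the Riesz potential of order $1$.

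Next I would establish (or invoke) the two-weight bound $I_1:L_{\vartheta_1}^{p(.)}(\mathbb{R}^n)\to L_{\vartheta_2}^{q(.)}(\mathbb{R}^n)$. This is precisely where the hypotheses of \cite[Corollary 3.1]{Li} are used: they impose $p(.),q(.)\in P^{\log}(\mathbb{R}^n)$, the pointwise Sobolev relation $\tfrac1{p(x)}-\tfrac1{q(x)}\le\tfrac1n$ (so that $q(.)$ does not exceed the Sobolev conjugate of $p(.)$), and a Muckenhoupt--Sawyer type compatibility condition linking $\vartheta_1$ and $\vartheta_2$, such as a uniform bound over balls $B$ of $|B|^{\frac1n-1}\big\|\vartheta_2^{1/q(.)}\chi_B\big\|_{q(.)}\big\|\vartheta_1^{-1/p(.)}\chi_B\big\|_{p^{\shortmid}(.)}$. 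Granting the potential bound, I would apply it to $|\nabla f|$ and chain with the pointwise estimate to obtain
\begin{equation*}
\|f\|_{L_{\vartheta_2}^{q(.)}}\le c_n\,\|I_1(|\nabla f|)\|_{L_{\vartheta_2}^{q(.)}}\le C\,\|\,|\nabla f|\,\|_{L_{\vartheta_1}^{p(.)}}\le C\,\|f\|_{W_{\vartheta_1}^{1,p(.)}},
\end{equation*}
which is the required inequality on $C_0^\infty(\mathbb{R}^n)$; a density argument then transfers it to all of $W_{\vartheta_1}^{1,p(.)}(\mathbb{R}^n)$, yielding the continuous embedding.

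The main obstacle is the two-weight inequality for $I_1$ in the variable exponent setting. In the constant exponent case this is the classical Muckenhoupt--Wheeden/Sawyer theory, but here the variable exponent and the two weights must be controlled simultaneously. The natural route is to first bound the fractional maximal operator $M_1$ of order $1$ by the Muckenhoupt-type condition on the pair $(\vartheta_1,\vartheta_2)$, then to dominate $I_1$ by $M_1$ either via a good-$\lambda$ inequality or via a pointwise interpolation estimate of the type $I_1f\le c\,(M_1f)^{\theta}(Mf)^{1-\theta}$ together with the boundedness of the Hardy--Littlewood maximal operator $M$ on $L_{\vartheta_1}^{p(.)}(\mathbb{R}^n)$ (available since $\vartheta_1\in A_{p(.)}$), and finally to use the log-H\"older continuity of the exponents to pass from the constant exponents frozen on each ball to the global variable exponents, in the spirit of the Diening--Cruz-Uribe--H\"ast\"o machinery. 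Carrying all of this out is the content of \cite[Corollary 3.1]{Li}, so in practice one simply cites that result; the outline above records the self-contained argument behind it.
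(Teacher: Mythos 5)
The paper does not prove Theorem \ref{theorem11} at all: it is imported verbatim from \cite[Corollary 3.1]{Li}, so your closing remark that in practice one simply cites that result is precisely what the authors do, and to that extent your proposal coincides with the paper's treatment.

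The self-contained outline you add, however, should not be presented as ``the content of \cite[Corollary 3.1]{Li}'' and, taken on its own terms, it is not yet a proof. First, the hypotheses you attribute to Liu (a pointwise Sobolev relation between $p(.)$ and $q(.)$ together with a two-weight Muckenhoupt--Sawyer ball condition on $(\vartheta_{1},\vartheta_{2})$) are your own reconstruction; the paper never records Liu's hypotheses, so nothing in the text supports this identification, and you would need to check Liu's actual conditions before claiming your route is the one behind that corollary. Second, the two steps that carry all the analytic weight are exactly the nontrivial ones and are left unestablished: (a) density of $C_{0}^{\infty }\left( \mathbb{R}^{n}\right) $ in $W_{\vartheta _{1}}^{1,p(.)}\left( \mathbb{R}^{n}\right) $ is not automatic for weighted (variable exponent) Sobolev spaces --- this is the weighted $H=W$ problem, and mollification interacts badly with a general weight $\vartheta _{1}$ unless one assumes something like $\vartheta _{1}\in A_{p(.)}$, which is not among your stated assumptions; (b) the two-weight bound $I_{1}:L_{\vartheta _{1}}^{p(.)}\left( \mathbb{R}^{n}\right) \longrightarrow L_{\vartheta _{2}}^{q(.)}\left( \mathbb{R}^{n}\right) $ under a Sawyer-type condition in the variable exponent setting is a substantial theorem in its own right, and the pointwise interpolation estimate $I_{1}f\leq c\left( M_{1}f\right) ^{\theta }\left( Mf\right) ^{1-\theta }$ you invoke needs hypotheses you have not verified. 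So the citation is fine and matches the paper; the sketch, if you want it to stand as a proof, still has its decisive steps missing.
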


\begin{theorem}
Assume that hypotheses of Theorem \ref{theorem11} hold. Also, let $\vartheta
_{2}\in A_{q\left( .\right) }$ and $\tciFourier $ be a bounded subset of $%
W_{\vartheta _{1}}^{1,p(.)}\left( 
\mathbb{R}
^{n}\right) $. If, for every $\varepsilon >0,$ there is a $\gamma >0$ such
that for all $f\in \tciFourier $%
\begin{equation}
\varrho _{W_{\vartheta _{1}}^{1,p(.)}\left( \left\vert x\right\vert >\gamma
\right) }\left( f\right) =\dint\limits_{\left\vert x\right\vert >\gamma
}\left( \left\vert f(x)\right\vert ^{p(x)}+\left\vert \nabla f(x)\right\vert
^{p(x)}\right) \vartheta _{1}(x)dx<\varepsilon ,  \label{totallybounded}
\end{equation}%
then $\tciFourier $ is a totally bounded subset of $L_{\vartheta
_{2}}^{q(.)}(%
\mathbb{R}
^{n})$.
\end{theorem}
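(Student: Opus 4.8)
The plan is to verify that $\tciFourier$ satisfies the Kolmogorov--Riesz criterion relative to $L_{\vartheta_{2}}^{q(.)}(\mathbb{R}^{n})$ (legitimate since the hypotheses of Theorem~\ref{theorem11} include $q(.)\in P^{\log }(\mathbb{R}^{n})$ with $1<q^{-}\leq q^{+}<\infty $, and $\vartheta_{2}\in A_{q(.)}$ is assumed), but in the form: $\tciFourier$ is totally bounded in $L_{\vartheta_{2}}^{q(.)}$ iff it has uniformly small mass near infinity and its restriction to each ball is totally bounded. To this end fix $\eta \in C_{0}^{\infty }(\mathbb{R}^{n})$ with $0\leq \eta \leq 1$, $\eta \equiv 1$ on $B(0,1)$, $\mathrm{supp}\,\eta \subset B(0,2)$, and for $\gamma \geq 1$ set $\eta _{\gamma }(x)=\eta (x/\gamma )$, so that $\Vert \nabla \eta _{\gamma }\Vert _{\infty }\leq \Vert \nabla \eta \Vert _{\infty }$; since $\eta _{\gamma }$ is smooth, bounded, with bounded derivative, both $\eta _{\gamma }f$ and $(1-\eta _{\gamma })f$ lie in $W_{\vartheta _{1}}^{1,p(.)}(\mathbb{R}^{n})$ whenever $f$ does.

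Boundedness of $\tciFourier$ in $L_{\vartheta _{2}}^{q(.)}(\mathbb{R}^{n})$ is immediate from the continuous embedding of Theorem~\ref{theorem11}. For the decay at infinity, using the product rule $\nabla \big((1-\eta _{\gamma })f\big)=(1-\eta _{\gamma })\nabla f-f\nabla \eta _{\gamma }$ and that $(1-\eta _{\gamma })f$ is supported in $\{\left\vert x\right\vert >\gamma \}$, one gets
\[
\varrho _{1,p(.),\vartheta _{1}}\big((1-\eta _{\gamma })f\big)\leq C\,\varrho _{W_{\vartheta _{1}}^{1,p(.)}(\left\vert x\right\vert >\gamma )}(f),
\]
with $C$ depending only on $p^{+}$ and $\Vert \nabla \eta \Vert _{\infty }$. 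By hypothesis (\ref{totallybounded}) the right-hand side is $<\varepsilon _{0}$ for every $f\in \tciFourier $ once $\gamma $ is large, so by the modular--norm inequalities $\Vert (1-\eta _{\gamma })f\Vert _{W_{\vartheta _{1}}^{1,p(.)}(\mathbb{R}^{n})}\leq 2\varepsilon _{0}^{1/p^{+}}$, whence Theorem~\ref{theorem11} yields $\Vert (1-\eta _{\gamma })f\Vert _{L_{\vartheta _{2}}^{q(.)}(\mathbb{R}^{n})}<\varepsilon /2$ uniformly in $f$, provided $\varepsilon _{0}$ is small enough.

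For the local part, multiplication by $\eta _{\gamma }$ is bounded on $W_{\vartheta _{1}}^{1,p(.)}$, so $\{\eta _{\gamma }f:f\in \tciFourier \}$ is bounded in $W_{\vartheta _{1}}^{1,p(.)}(B(0,2\gamma ))$; by the compact Sobolev embedding $W_{\vartheta _{1}}^{1,p(.)}(B(0,2\gamma ))\hookrightarrow L_{\vartheta _{2}}^{q(.)}(B(0,2\gamma ))$ available on bounded domains under the assumptions of Theorem~\ref{theorem11} (the condition $\vartheta _{2}\in A_{q(.)}$ keeping us in the admissible range), this family is totally bounded in $L_{\vartheta _{2}}^{q(.)}(B(0,2\gamma ))$, hence, extending by zero, in $L_{\vartheta _{2}}^{q(.)}(\mathbb{R}^{n})$; let $\{g_{1},\dots ,g_{m}\}$ be a finite $(\varepsilon /2)$-net. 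Given $f\in \tciFourier $, pick $g_{i}$ with $\Vert \eta _{\gamma }f-g_{i}\Vert _{L_{\vartheta _{2}}^{q(.)}}<\varepsilon /2$; then $\Vert f-g_{i}\Vert _{L_{\vartheta _{2}}^{q(.)}}\leq \Vert (1-\eta _{\gamma })f\Vert _{L_{\vartheta _{2}}^{q(.)}}+\Vert \eta _{\gamma }f-g_{i}\Vert _{L_{\vartheta _{2}}^{q(.)}}<\varepsilon $, so $\{g_{i}\}$ is a finite $\varepsilon $-net and $\tciFourier $ is totally bounded in $L_{\vartheta _{2}}^{q(.)}(\mathbb{R}^{n})$.

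The main obstacle is precisely this local compactness: the continuous embedding of Theorem~\ref{theorem11} by itself is not enough, one genuinely needs precompactness of $W_{\vartheta _{1}}^{1,p(.)}$-bounded sets inside $L_{\vartheta _{2}}^{q(.)}$ on balls. A direct verification of condition (iii) of Theorem~\ref{theorem3} for $\tciFourier $ runs into trouble: Proposition~\ref{proposition2} together with $\left\vert f(x-y)-f(x)\right\vert \leq \left\vert y\right\vert \int _{0}^{1}\left\vert \nabla f(x-sy)\right\vert \,ds$ gives the pointwise bound $\left\vert f\ast \varphi _{\delta }-f\right\vert \leq \delta \,M(\left\vert \nabla f\right\vert )$, and with $\vartheta _{2}\in A_{q(.)}$ this would reduce (iii) to a uniform bound on $\Vert \,\left\vert \nabla f\right\vert \,\Vert _{L_{\vartheta _{2}}^{q(.)}}$ over $\tciFourier $ — a bound not furnished by Theorem~\ref{theorem11}, which controls only $f$ itself in $L_{\vartheta _{2}}^{q(.)}$; this is why the cut-off plus compact-embedding route is the one to take.
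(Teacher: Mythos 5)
Your decomposition $f=\eta _{\gamma }f+(1-\eta _{\gamma })f$ and the tail estimate are fine (indeed, the smooth cut-off handles the term $f\nabla \eta _{\gamma }$ cleanly, and the modular--norm inequalities plus Theorem \ref{theorem11} do make $\left\Vert (1-\eta _{\gamma })f\right\Vert _{L_{\vartheta _{2}}^{q(.)}}$ uniformly small). The genuine gap is the local part: you invoke a \emph{compact} embedding $W_{\vartheta _{1}}^{1,p(.)}\left( B\left( 0,2\gamma \right) \right) \hookrightarrow L_{\vartheta _{2}}^{q(.)}\left( B\left( 0,2\gamma \right) \right) $ as ``available on bounded domains under the assumptions of Theorem \ref{theorem11}.'' Theorem \ref{theorem11} only provides a \emph{continuous} embedding, and continuity of an embedding never yields compactness on subdomains by itself; no result stated in the paper (nor any you prove) gives this local precompactness, and you yourself identify it as ``the main obstacle.'' So the step on which the whole argument hinges is asserted, not established: if such a compact embedding on balls were freely available, the theorem would be nearly immediate, which is precisely why it cannot be taken for granted here.

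For contrast, the paper closes exactly this gap without any compact Sobolev embedding: it verifies the three conditions of Theorem \ref{theorem4} for $\tciFourier $ with exponent $q(.)$ and weight $\vartheta _{2}$. Boundedness and the decay condition come from the continuous embedding and the hypothesis (\ref{totallybounded}) (applied to $f\chi _{\left\{ \left\vert x\right\vert >\gamma \right\} }$ together with the norm--modular inequalities), while the equicontinuity-type condition is obtained by approximating $f$ by $g\in C_{c}\left(
\mathbb{R}
^{n}\right) $ (dense in $L_{\vartheta _{2}}^{q(.)}$), using the Lebesgue differentiation theorem and dominated convergence to get $\left\Vert g-\left( g\right) _{B\left( .,h\right) }\right\Vert _{L_{\vartheta _{2}}^{q(.)}}\rightarrow 0$, and controlling $\left\Vert \left( g\right) _{B\left( .,h\right) }-\left( f\right) _{B\left( .,h\right) }\right\Vert _{L_{\vartheta _{2}}^{q(.)}}$ by the maximal operator, which is where $\vartheta _{2}\in A_{q(.)}$ enters. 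If you want to keep your cut-off scheme, you must either prove the local compact embedding (e.g.\ by exactly such a Riesz--Kolmogorov argument on the ball, which amounts to redoing the paper's step) or cite a specific compactness theorem for $W_{\vartheta _{1}}^{1,p(.)}$ on bounded domains under the stated hypotheses; as written, the proposal assumes the conclusion's hard core.
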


\begin{proof}
For the proof, we will show that $\tciFourier $ satisfies the hypotheses of
Theorem \ref{theorem4} with $p\left( .\right) $ replaced by $q\left(
.\right) .$ By Theorem \ref{theorem11}, there exists a $C>0$ such that%
\begin{equation}
\left\Vert f\right\Vert _{L_{\vartheta _{2}}^{q(.)}\left( 
\mathbb{R}
^{n}\right) }\leq C\left\Vert f\right\Vert _{W_{\vartheta
_{1}}^{1,p(.)}\left( 
\mathbb{R}
^{n}\right) }  \label{embedding}
\end{equation}%
for all $f\in \tciFourier \subset W_{\vartheta _{1}}^{1,p(.)}\left( 
\mathbb{R}
^{n}\right) .$ This yields the condition \textit{(i)} of Theorem \ref%
{theorem4}. Now, we set a function $u\left( .\right) =f\left( .\right) \chi
\left( \left\vert .\right\vert -\gamma \right) $ where $\chi \left(
\left\vert .\right\vert -\gamma \right) =\left\{ 
\begin{array}{c}
0,\text{ \ \ }\left\vert .\right\vert \leq \gamma \\ 
1,\text{ \ \ }\left\vert .\right\vert >\gamma%
\end{array}%
\right. .$ If we consider the (\ref{embedding}) and \cite[Proposition 2.3]%
{Li}, then we have%
\begin{equation}
\left\Vert u\right\Vert _{L_{\vartheta _{2}}^{q(.)}\left( 
\mathbb{R}
^{n}\right) }\leq C\max \left\{ \left( \varrho _{W_{\vartheta
_{1}}^{1,p(.)}\left( 
\mathbb{R}
^{n}\right) }\left( u\right) \right) ^{\frac{1}{p^{-}}},\left( \varrho
_{W_{\vartheta _{1}}^{1,p(.)}\left( 
\mathbb{R}
^{n}\right) }\left( u\right) \right) ^{\frac{1}{p^{+}}}\right\} .
\label{normmoduler}
\end{equation}%
By the expression (\ref{totallybounded}), we get%
\begin{eqnarray*}
\varrho _{W_{\vartheta _{1}}^{1,p(.)}\left( 
\mathbb{R}
^{n}\right) }\left( u\right) &=&\dint\limits_{\left\vert x\right\vert
>\gamma }\left( \left\vert u\left( x\right) \right\vert ^{p\left( x\right)
}+\left\vert \nabla u\left( x\right) \right\vert ^{p\left( x\right) }\right)
\vartheta _{1}\left( x\right) dx \\
&&+\dint\limits_{\left\vert x\right\vert \leq \gamma }\left( \left\vert
u\left( x\right) \right\vert ^{p\left( x\right) }+\left\vert \nabla u\left(
x\right) \right\vert ^{p\left( x\right) }\right) \vartheta _{1}\left(
x\right) dx \\
&=&\dint\limits_{\left\vert x\right\vert >\gamma }\left( \left\vert f\left(
x\right) \right\vert ^{p\left( x\right) }+\left\vert \nabla f\left( x\right)
\right\vert ^{p\left( x\right) }\right) \vartheta _{1}\left( x\right)
dx<\varepsilon .
\end{eqnarray*}%
Since $\varepsilon $ is sufficiently small, we obtain%
\begin{equation*}
\varrho _{L_{\vartheta _{2}}^{q(.)}\left( 
\mathbb{R}
^{n}\right) }\left( u\right) \leq \left\Vert u\right\Vert _{L_{\vartheta
_{2}}^{q(.)}\left( 
\mathbb{R}
^{n}\right) }<\varepsilon ^{\ast }.
\end{equation*}%
Therefore, we have%
\begin{eqnarray*}
\varrho _{L_{\vartheta _{2}}^{q(.)}\left( 
\mathbb{R}
^{n}\right) }\left( u\right) &=&\dint\limits_{\left\vert x\right\vert
>\gamma }\left\vert u\left( x\right) \right\vert ^{q\left( x\right)
}\vartheta _{2}\left( x\right) dx+\dint\limits_{\left\vert x\right\vert \leq
\gamma }\left\vert u\left( x\right) \right\vert ^{q\left( x\right)
}\vartheta _{2}\left( x\right) dx \\
&=&\dint\limits_{\left\vert x\right\vert >\gamma }\left\vert f\left(
x\right) \right\vert ^{q\left( x\right) }\vartheta _{2}\left( x\right)
dx<\varepsilon ^{\ast }
\end{eqnarray*}%
for all $f\in \tciFourier .$ This completes the condition of \textit{(ii) }%
of Theorem \ref{theorem4}. Now, we will consider the rest of proof. Let $%
f\in \tciFourier .$ Since the space $C_{c}\left( 
\mathbb{R}
^{n}\right) $ is dense in $L_{\vartheta _{2}}^{q(.)}\left( 
\mathbb{R}
^{n}\right) ,$ given $\varepsilon >0$ there exists $g\in C_{c}\left( 
\mathbb{R}
^{n}\right) $ such that%
\begin{equation}
\left\Vert f-g\right\Vert _{L_{\vartheta _{2}}^{q(.)}\left( 
\mathbb{R}
^{n}\right) }<\frac{\varepsilon }{2}.  \label{son1}
\end{equation}

Now, we will show that%
\begin{equation}
\left\Vert g-\left( g\right) _{B\left( .,h\right) }\right\Vert
_{L_{\vartheta _{2}}^{q(.)}\left( 
\mathbb{R}
^{n}\right) }\longrightarrow 0  \label{son2}
\end{equation}%
or equivalently%
\begin{equation*}
\dint\limits_{%
\mathbb{R}
^{n}}\left\vert g(x)-\left( g\right) _{B\left( x,h\right) }\right\vert
^{q(x)}\vartheta _{2}(x)dx\longrightarrow 0
\end{equation*}%
as $h\longrightarrow 0^{+}.$ By the Proposition \ref{proposition1}, we have $%
L_{\vartheta _{2}}^{q(.)}\hookrightarrow L_{loc}^{1}.$ Therefore, we have $%
\left( g\right) _{B\left( x,h\right) }\longrightarrow g(x)$ for $x\in 
\mathbb{R}
^{n}$ as $h\longrightarrow 0^{+}$ by the Lebesgue differentiation theorem,
see \cite{Hei}. If we use the boundedness of the Hardy-Littlewood maximal
operator $Mg$ for $g\in L_{\vartheta _{2}}^{q(.)}\left( 
\mathbb{R}
^{n}\right) $, then we get%
\begin{eqnarray*}
\left\vert g\left( x\right) -\left( g\right) _{B\left( x,h\right)
}\right\vert ^{q\left( x\right) }\vartheta _{2}\left( x\right) &\leq
&2^{q^{+}-1}\left( \left\vert g\left( x\right) \right\vert ^{q\left(
x\right) }+\left\vert \left( g\right) _{B\left( x,h\right) }\right\vert
^{q\left( x\right) }\right) \vartheta _{2}\left( x\right) \\
&\leq &2^{q^{+}-1}\left( \left\vert g\left( x\right) \right\vert ^{q\left(
x\right) }+\left\vert M\left( g\right) \left( x\right) \right\vert ^{q\left(
x\right) }\right) \vartheta _{2}\left( x\right) \in L^{1}\left( 
\mathbb{R}
^{n}\right) .
\end{eqnarray*}%
By the Lebesgue dominated convergence theorem, we have%
\begin{equation*}
\dint\limits_{%
\mathbb{R}
^{n}}\left\vert g(x)-\left( g\right) _{B\left( x,h\right) }\right\vert
^{q(x)}\vartheta _{2}(x)dx<\frac{\varepsilon }{2}.
\end{equation*}%
for sufficiently small $h>0$. Therefore, if we consider (\ref{son1}) and (%
\ref{son2}), then we obtain%
\begin{eqnarray*}
&&\left\Vert f-\left( f\right) _{B\left( .,h\right) }\right\Vert
_{L_{\vartheta _{2}}^{q(.)}\left( 
\mathbb{R}
^{n}\right) } \\
&\leq &\left\Vert f-g\right\Vert _{L_{\vartheta _{2}}^{q(.)}\left( 
\mathbb{R}
^{n}\right) }+\left\Vert g-\left( g\right) _{B\left( .,h\right) }\right\Vert
_{L_{\vartheta _{2}}^{q(.)}\left( 
\mathbb{R}
^{n}\right) }+\left\Vert \left( g\right) _{B\left( .,h\right) }-\left(
f\right) _{B\left( .,h\right) }\right\Vert _{L_{\vartheta _{2}}^{q(.)}\left( 
\mathbb{R}
^{n}\right) } \\
&\leq &\left\Vert f-g\right\Vert _{L_{\vartheta _{2}}^{q(.)}\left( 
\mathbb{R}
^{n}\right) }+\left\Vert M\left( g-f\right) \right\Vert _{L_{\vartheta
_{2}}^{q(.)}\left( 
\mathbb{R}
^{n}\right) }+\left\Vert g-\left( g\right) _{B\left( .,h\right) }\right\Vert
_{L_{\vartheta _{2}}^{q(.)}\left( 
\mathbb{R}
^{n}\right) }<\varepsilon
\end{eqnarray*}%
This completes the proof.
\end{proof}

\bigskip

\bigskip


\begin{thebibliography}{99}
\bibitem{Ay} I. Aydin, On variable exponent amalgam spaces, An. \c{S}tiin%
\c{t}. Univ. "Ovidius" Constan\c{t}a Ser. Mat., Vol. XX, Fasc. 3, (2012).

\bibitem{Ayd} I. Aydin, Weighted variable Sobolev spaces and capacity, J.
Funct. Spaces Appl., Vol. 2012, Article ID 132690, 17 pages,
doi:10.1155/2012/132690, (2012).

\bibitem{Ady} I. Aydin, On vector-valued classical and variable exponent
amalgam spaces, Commun.Fac. Sci. Univ. Ank. Series A1, Vol. 66, No. 2, Pages
100--114 (2017).

\bibitem{Ag} I. Aydin and A. T. Gurkanli, Weighted variable exponent amalgam
spaces $W\left( L^{p(x)},L_{w}^{q}\right) $, Glas. Mat., Vol. 47, No. 67,
(2012),165-174.

\bibitem{Ba} R. Bandaliyev, Compactness criteria in weighted variable
Lebesgue spaces, Miskolc Math. Notes, Vol. 18, No. 1, (2017), 95-101.

\bibitem{BanGor} R. Bandaliyev and P. G\'{o}rka, Relatively compact sets in
variable-exponent Lebesgue spaces, Banach J. Math. Anal., Vol. 12, No. 2,
(2018), 331-346

\bibitem{Hd} L. Diening and P. H\"{a}st\"{o}, Muckenhoupt weights in
variable exponent spaces, preprint.

\bibitem{Duo} J. Duoandikoetxea, Fourier Analysis, Grad. Studies in Math.
29, Amer. Math. Soc., Providence, 2000.

\bibitem{Fie} H. G. Feichtinger, A compactness criterion for translation
invariant Banach spaces of functions, Anal. Math., Vol. 8, (1982), 165-172.

\bibitem{Fou} J. J. Fournier and J. Stewart, Amalgams of $L^{p}$ and $l^{q}$%
, Bull. Amer. Math. Soc., Vol. 13, (1985), 1-21.

\bibitem{Go} S. Goes and R. Welland, Compactness criteria for K\"{o}the
spaces, Math. Ann., Vol. 188, (1970), 251-269.

\bibitem{Gor} P. G\'{o}rka and A. Macios, The Riesz-Kolmogorov theorem on
metric spaces,\ Miskolc Math. Notes, Vol. 15, No. 2, (2014), 459-465.

\bibitem{Gom} P. G\'{o}rka and A. Macios, Almost everything you need to know
about relatively compact sets in variable Lebesgue spaces,\ J. Funct. Anal.,
Vol. 269, No. 7, (2015), 1925-1949, doi:10.1016/j.jfa.2015.06.024.

\bibitem{Gr} P. G\'{o}rka and H. Rafeiro, From Arzel\`{a}-Askoli to
Riesz-Kolmogorov, Nonlinear Analysis, Vol. 144, (2016), 23-31.

\bibitem{Gg} G. Grubb, Distributions and Operators, Springer, New York, 2009.

\bibitem{Gur} A. T. Gurkanli, The amalgam spaces $W(L^{p(x)};L^{\left\{
p_{n}\right\} })$ and boundedness of Hardy-Littlewood maximal operators,
Current Trends in Analysis and Its Applications: Proceedings of the 9th
ISAAC Congress, (2013).

\bibitem{Gra} A. T. Gurkanli\ and I. Aydin, On the weighted variable
exponent amalgam space $W(L^{p(x)};L_{m}^{q})$, Acta Math. Sci., Vol. 34,
No. 4, (2014), 1098-1110.

\bibitem{Hoh} H. Hanche-Olsen and H. Holden, The Kolmogorov--Riesz
compactness theorem, Expo. Math., Vol 28, No. 4, (2010), 385-394.

\bibitem{Hei} J. Heinonen, Lectures on Analysis on Metric Spaces,
Universitext, 2001.

\bibitem{Ho} F. Holland, Harmonic analysis on amalgams of $L^{p}$ and $l^{q}$%
, J. London Math. Soc., Vol. 10, No. 3, (1975), 295-305.

\bibitem{Kmz} V. Kokilashvili, A. Meskhi and M. A. Zaighum, Weighted kernel
operators in variable exponent amalgam spaces, J. Inequal. Appl., 2013,
DOI:10.1186/1029-242X-2013-173.

\bibitem{Kol} A. N. Kolmogorov, \"{U}ber Kompaktheit der Funktionenmengen
bei der Konvergenz im Mittel, Nachr. Ges. Wiss. G\"{o}ttingen, Vol. 9,
(1931), 60-63.

\bibitem{Kor} O. Kov\'{a}\v{c}ik and J. R\'{a}kosn\'{\i}k, On spaces $%
L^{p(x)}$ and $W^{k,p(x)}$, Czechoslovak Math. J., Vol. 41 (116), No. 4,
(1991), 592-618.

\bibitem{Kul} O. Kulak and A. T. Gurkanli, Bilinear multipliers of weighted
Wiener amalgam spaces and variable exponent Wiener amalgam spaces, J.
Inequal. Appl., Vol. 2014, 2014:476.

\bibitem{Lah} B. Lahmi, E. Azroul and K. El Haitin, Nonlinear degenerated
elliptic problems with dual data and nonstandard growth, Math. Reports, Vol.
20(70), No. 1, (2018), 81-91.

\bibitem{Li} Q. Liu, Compact trace in weighted variable exponent Sobolev
spaces $W^{1,p(x)}\left( \Omega ,\upsilon _{0},\upsilon _{1}\right) $, J.
Math. Anal. Appl. Vol. 348, No. 2, (2008), 760-774.

\bibitem{MesZa} A. Meskhi and M. A. Zaighum, On the boundedness of maximal
and potential operators in amalgam spaces, J. Math. Inequal., Vol. 8, No. 1,
(2014), 123-152.

\bibitem{Mus} J. Musielak, Orlicz Spaces and Modular Spaces, Lecture Notes
in Mathematics, 1034, Springer-Verlag, Berlin, 1983.

\bibitem{Pan} S. S. Pandey, Compactness in Wiener amalgams on locally
compact groups, International Journal of Mathematics and Mathematical
Sciences, Vol. 2003, No. 55, (2003), 3503-3517.

\bibitem{Raf} H. Rafeiro, Kolmogorov compactness criterion in variable
exponent Lebesgue spaces,\ Proc. A. Razmadze Math. Inst., Vol. 150, (2009),
105-113.

\bibitem{Sq} M. L. T. Squire, Amalgams of $L^{p}$ and $l^{q}$, Ph.D. Thesis,
McMaster University, 1984.

\bibitem{Tak} T. Takahashi, On the compactness of the function-set by the
convergence in the mean of general type. Studia Math., Vol. 5, (1934),
141-150.

\bibitem{Wei} A. Weil, L'integration Dans Les Groupes Topologiques et Ses
Applications, Hermann et Cie., Paris, 1940.

\bibitem{Wie} N. Wiener, On the representation of functions by
trigonometrical integrals, Math. Z., Vol. 24, (1926), 575-616.

\bibitem{Yo} K. Yosida, Functional Analysis, Springer-Verlag, Berlin-New
York, 1980.
\end{thebibliography}
\end{document}